\newcommand{\leqnomode}{\tagsleft@true}
\newcommand{\reqnomode}{\tagsleft@false}
\numberwithin{equation}{section}
\newtheorem{theorem}{Theorem}[section]
\newtheorem{Ex}[theorem]{Example} 
\newtheorem{corollary}[theorem]{Corollary}
\newtheorem{proposition}[theorem]{Proposition}
\newtheorem{cor}[theorem]{Corollary}
\newtheorem{remark}[theorem]{Remark}
\renewcommand{\rightarrow}{\to}
\newcommand{\dive}{\mathrm{div}}
\title[Quasilinear elliptic problems via nonlinear Rayleigh quotient]{Quasilinear elliptic problems via nonlinear Rayleigh quotient}
\author[Edcarlos D. Silva]{Edcarlos D. Silva}
\author[Marcos L. M. Carvalho]{Marcos L. M. Carvalho*}
\author[Leszek Gasi\'nski]{Leszek Gasi\'nski}
\author[Jo\~ao R. Santos J\'unior]{Jo\~ao R. Santos J\'unior}
\address[M. L. M. Carvalho]{Department of Mathematics,
	Federal University of Goi\'{a}s
	\newline\indent
	74001-970, Goi\'{a}s-GO, Brazil}
\email{\href{mailto: marcos$\_$leandro$\_$carvalho@ufg.br}{marcos$\_$leandro$\_$carvalho@ufg.br}}
\address[Leszek Gasi\'nski]{Department of Mathematics, University of the National Education Commission, 
	\newline\indent
	Krakow, Poland}
\email{\href{mailto:leszek.gasinski@up.krakow.pl}{leszek.gasinski@up.krakow.pl}}
\address[Jo\~ao R. Santos J\'unior]{Faculty of Mathematics, Federal university of Par\'a, 
	\newline\indent
	Bel\'em-PA, Brazil}
\email{\href{mailto:joaojunior@ufpa.br}{joaojunior@ufpa.br}}
\address[Edcarlos D. Silva]{Department of Mathematics,
	Federal University of Goi\'{a}s
	\newline\indent
	74001-970, Goi\'{a}s-GO, Brazil}
\email{\href{mailto:edcarlos@ufg.br}{edcarlos@ufg.br.com}}
\thanks{Corresponding author*: Marcos L. M. Carvalho}
\thanks{The first author was partially supported by CNPq with grant 316643/2021-1. The third author was partially supported by CNPq with grant 307045/2021-8. The fourth author was partially supported by CNPq with grant 429955/2018-9.}
\subjclass[2000]{35J50, 35B33, 35Q55}
\keywords{Quasilinear elliptic problems; Concave-convex nonlinearities; Nonhomogeneous operators, Nehari method, Rayleigh quotient}
\begin{document}
	
	
	\begin{abstract}
		It is established existence and multiplicity of solution for the following class of quasilinear elliptic problems 
		$$
		\left\{
		\begin{array}{lr}
			-\Delta_\Phi u = \lambda a(x) |u|^{q-2}u + |u|^{p-2}u, & x\in\Omega,\\
			u = 0, & x \in \partial \Omega,
		\end{array}
		\right.
		$$
		where $\Omega \subset \mathbb{R}^N, N \geq 2,$ is a smooth bounded domain, $1 < q < \ell \leq m < p < \ell^*$ and $\Phi: \mathbb{R} \to \mathbb{R}$ is suitable $N$-function. The main feature here is to show whether the Nehari method can be applied to find the largest positive number $\lambda^* > 0$ in such way that our main problem admits at least two distinct solutions for each $\lambda \in (0, \lambda^*)$. Furthermore, using some fine estimates and some extra assumptions on $\Phi$, we prove the existence of at least two positive solutions for $\lambda = \lambda^*$ and $\lambda \in (\lambda^*, \overline{\lambda})$ where $\overline{\lambda} > \lambda^*$. 
		
	\end{abstract}
	\maketitle

	
	\section{Introduction}	
	The present work is devoted to establish existence and multiplicity of solutions to the following class of quasilinear elliptic problem: 
	\begin{equation}\label{pi}
		\left\{
		\begin{array}{lr}
			-\Delta_\Phi u = \lambda a(x) |u|^{q-2}u + |u|^{p-2}u, & x\in\Omega,\\
			u = 0, & x \in \partial \Omega,
		\end{array}
		\right. \tag{$P_\lambda$}
	\end{equation}
	where $\Omega \subset \mathbb{R}^N, N \geq 2$ is a smooth bounded domain, $1 < q < \ell \leq m < p < \ell^*, \ell^* = N \ell/(N - \ell), N \geq 2$. Later on, we shall give the assumptions on $p, q, \Phi$ and the potential $a : \Omega \to \mathbb{R}$. 
	Recall that the $\Phi$-Laplacian operator is given by $\Delta_{\Phi} u:=\mbox{div}(\phi(|\nabla u|)\nabla u)$. Throughout this work we shall consider $\Phi:\mathbb{R}\rightarrow\mathbb{R}$ as an even function defined by
	\begin{equation}\label{phi}
		\Phi(t)=\int_0^{|t|}s\phi(s)\,\mathrm{d} s,
	\end{equation}
	where $\phi:(0,\infty)\to (0,\infty)$ satisfies some suitable conditions.

	Due to the nature of the $\Phi$-Laplacian operator, we need to consider framework of the Orlicz and Orlicz-Sobolev spaces, see Section 2 ahead. These kinds of spaces are appropriated in order to control the generality of the function $\Phi$ which allows us to find weak solutions for the Problem \eqref{pi}. It is important to emphasize that Orlicz and Orlicz-Sobolev spaces was introduced by M. W. Orlicz \cite{Orlicz}. On this subject we also refer the reader to the important works Krasnosel’skii and Rutickii \cite{Krasn}, Luxemburg \cite{Lux} and Donaldson \& Trudinger \cite{Donaldson} and references therein. In those works the main objective is to extend and complement the theory of the Sobolev spaces $W_0^{1,r}(\Omega),~1<r<\infty$. Namely, assuming that $\Phi(t)=|t|^r$, we deduce that the Orlicz-Sobolev space is $W_0^{1,r}(\Omega)$. Furthermore, for the function $\Phi(t)=|t|^r$ we obtain the $p$-Laplacian operator, see \cite{peral}. It is worthwhile to mention also that for $\Phi(t)=|t|^{r_1}+|t|^{r_2}, r_1 < r_2$ where $\Omega$ is a bounded and smooth domain that $W_0^{1,\Phi}(\Omega)=W_0^{1,r_2}(\Omega)$. Moreover, for $\Phi(t)=|t|^{r_1}+|t|^{r_2}$ we obtain the $(r_1,r_2)$-Laplacian operator which is not homogeneous,  see \cite{Olimpio, Li, Marano}. In the same way, assuming that $\Phi(t)=|t|^r\log(1+|t|)$, the associated Orlicz-Sobolev space does not correspond to any Sobolev space, see also \cite{Alberico}. Here we also refer the reader to the important works \cite{adams, Gz1, gossez-Czech, rao} where many aspects on Orlicz and Orlicz-Sobolev spaces are considered.

	It is worthwhile to mention that many special types of quasilinear elliptic problems involving the $\Phi$-Laplacian operator arise from physical phenomena such as in nonlinear elasticity, plasticity, generalized Newtonian fluids, see \cite{fuk2} and references therein. Recall also those semilinear elliptic problems with general nonlinearities have been also studied for a huge class of operators in the last years, see \cite{ABC,brow0,brow1,diegoed,yavdat0}.

	It is important to stress that the nonlinearity $f(x,t) = \lambda a(x) |t|^{q-2}t + |t|^{p-2}t$ is a concave-convex function and the $\Phi$-Laplacian operator is not homogeneous.  Since we are interested in considering general quasilinear elliptic problems we need to deal with some difficulties. The first one is the loss of the homogeneity on the left side hand for the Problem \eqref{pi}. The second difficulty arises whether we consider the largest positive number $\lambda^*$ such that the Nehari method can be applied. To the best of our knowledge, the present work is the first one considering nonhomogeneous operators together with concave-convex nonlinearities where the parameter $\lambda \in (0, \lambda^*)$ plays a principal role. The main difficulty here comes from the fact that for $\lambda=\lambda^*$ the fibering map for the associated energy functional has inflection points. These kinds of difficulties are overcome by using some fine estimates and taking into account the nonlinear Rayleigh quotient method, see \cite{yavdat1, Giacomoni}.  Recall also that the working space is given by the reflexive Banach space $W^{1, \Phi}_0(\Omega)$ where $\Phi$ is a $N$-function. Namely, we consider the Sobolev-Orlicz space which has been widely studied in the last years, see also \cite{Alves1,bonanno,Fuk_1,Fig,fuk2,fang,carvalho,rad1}.

	In the present work our main contribution is to consider the existence and multiplicity of solutions for quasilinear operators driven by the $\Phi$-Laplacian operator where the nonlinearity is concave-convex and the parameter $\lambda > 0$ is not small. Hence, the fibering map for the associated energy functional admits inflections points. Hence, assuming that some minimizer for energy functional restricted to the Nehari manifold is an inflection point, the Lagrange Multiplier Theorem does not work anymore. Hence,  one of the main difficulties is to find a sharp parameter $\lambda^* > 0$ in such way that the Nehari method can be applied for each $\lambda \in (0,\lambda^*)$. Namely, for each $\lambda \in (0, \lambda^*)$, the fibering map does not admit any inflection points. Furthermore, for $\lambda= \lambda^*$, we observe that there exist fibering maps with an inflection point. In order to avoid this kind of problem we use a sequence argument which ensures that our main problem has at least two positive solutions for $\lambda = \lambda^*$. For this case we prove a nonexistence in a subset of the Nehari manifold. More precisely, we prove that the Problem \eqref{pi} does not admit any nontrivial solution which is also an inflection point for the fibering map given just above. The last assertion allows to prove that there exist at least two critical points for our energy functional which are given by minimizers in some subsets of the Nehari manifold. Here we emphasize that in the present work we do not require any kind of homogeneity for the $N$-function $\Phi$. Hence, our work complements the afore-mentioned results by considering the existence and multiplicity of solutions for quasilinear elliptic problems driven by the $\Phi$-Laplacian operator where the homogeneity is not satisfied.

	It is important to stress that the nonlinear Rayleigh quotient method works in general whenever all terms in the energy functional are homogeneous of some degree. In this manner, up to now, the present work is the first one that considers nonhomogeneous operators together with the Nehari method where the nonlinear Rayleigh method can be applied. Furthermore, for $\lambda > \lambda^*$, we prove also existence and multiplicity of solutions using the fact that $\lambda \in (\lambda^*, \overline{\lambda})$ where $\overline{\lambda} > \lambda^*$ is a suitable new parameter.   
	
	\subsection{Assumptions and statement of the main results}
	
	As was told in the introduction we shall consider existence and nonexistence of nontrivial weak solutions for the Problem \eqref{pi} looking for the parameter $\lambda > 0$. From a standard variational point of view finding weak solutions to the elliptic Problem \eqref{pi} is equivalent to finding critical points for the $C^1$ functional $J_{\lambda}: W^{1, \Phi}_0(\Omega) \to \mathbb{R}$ given by
	\begin{equation*}
		J_{\lambda}(u) = \int_{\Omega} \Phi(|\nabla u|) dx  - \frac{\lambda}{q} \int_{\Omega} a(x) |u|^{q} dx -\frac{1}{p}\int_{\Omega} |u|^{p} dx,\, \, u \in W^{1, \Phi}_0(\Omega).
	\end{equation*}	
	It is worthwhile to mention that the Gauteaux derivative of $J_\lambda$ is given by 
	\begin{equation*}
		J_{\lambda}'(u) v = \int_{\Omega} \phi(|\nabla u|) \nabla u \nabla v dx - \lambda \int_{\Omega} a(x) |u|^{q-2}u v dx -\int_{\Omega} |u|^{p-2}u v dx,\, \, u, v \in W^{1, \Phi}_0(\Omega).
	\end{equation*}

	Throughout this work we shall assume the following hypotheses: 
	$\phi:(0,+\infty)\rightarrow(0,+\infty)$ is a $C^1$-function satisfying the following assumptions:
	\begin{itemize}
		\item[($\phi_1$)] $t\phi(t)\mapsto 0$, as $t\mapsto 0$ and $t\phi(t)\mapsto\infty$, as $t\mapsto\infty$;
		\item[($\phi_2$)] $t\phi(t)$ is strictly increasing in $(0, \infty)$;
		\item [($\phi_3$)] $\displaystyle\int_{0}^{1}\frac{\Phi^{-1}(\tau)}{\tau^{\frac{N+1}{N}}}\mathrm{d}\tau<\infty\quad \text{and}\quad
		\displaystyle\int_{1}^{+\infty}\frac{\Phi^{-1}(\tau)}{\tau^{\frac{N+1}{N}}}\mathrm{d}\tau=\infty;$
		\item[$(\phi_4)$] There holds 
		$$-1<\ell - 2 := \inf_{t > 0} \frac{\phi'(t)t}{\phi(t)} \leq \sup_{t > 0} \frac{\phi'(t)t}{\phi(t)} := m - 2<\infty.$$
	\end{itemize}
	Furthermore, we assume also the following assumptions: 
	\begin{enumerate}[label=($H_1$),ref=$(H_1)$] 
		\item \label{paper4f1}
		$1 < q < \ell \leq m < p < \ell^*, \ell^* = N \ell/(N - \ell)$;
	\end{enumerate}

	\begin{enumerate}[label=($H_2$),ref=$(H_2)$] 
		\item \label{paper4f3}
		The function $a: \Omega \to \mathbb{R}$ belongs to $L^{\ell/(\ell-q)}(\Omega)\setminus\{0\}$ and $a(x) \geq a_0 > 0$ for each $x \in \Omega$; 
	\end{enumerate}
	\begin{enumerate}[label=$(H_3)$,ref=$(H_3)$] 
		\item The function \label{paper4f4}
		$$t \mapsto \dfrac{(2 - q) \phi(t) + \phi'(t)t}{t^{p-2}}$$	
		is strictly decreasing for each $t > 0$.	
	\end{enumerate}
	Now, we shall consider the Nehari set as follows
	\begin{equation*}
		\begin{array}{rcl}
			\mathcal{N}_{\lambda}&=&\left\{ u \in  W^{1, \Phi}_0(\Omega)\setminus \{0\}: J_{\lambda}'(u) u  = 0\right\}, \lambda > 0.
		\end{array}
	\end{equation*}
	For the Nehari method we refer the reader to \cite{nehari1,nehari2}. Notice also that the Nehari set can be splited in the following form:
	\begin{eqnarray}
		\mathcal{N}_{\lambda}^+&=&\{u\in\mathcal{N}_\lambda: J''_\lambda(u)(u,u)>0 \}\label{n+}, \nonumber \\ 
		\mathcal{N}_{\lambda}^-&=&\{u\in\mathcal{N}_\lambda: J''_\lambda(u)(u,u)<0 \}\label{n-}, \nonumber \\
		\mathcal{N}_{\lambda}^0&=&\{u\in\mathcal{N}_\lambda: J''_\lambda(u)(u,u)=0\}\label{n_0}. \nonumber 
	\end{eqnarray}	
	It is important to emphasize that $u \mapsto J_{\lambda}''(u)(u,u)$ is well defined for each $u \in W^{1,\Phi}_{0}(\Omega)$. The main feature in the present work is to ensure that the minimization problems
	\begin{equation}\label{ee1}
		c_\mathcal{N^+} :=\inf\{J_{\lambda}(u): u\in \mathcal{N^+}\}
	\end{equation}
	and
	\begin{equation}\label{ee2}
		c_\mathcal{N^-} :=\inf\{J_{\lambda}(u): u\in \mathcal{N^-}\}
	\end{equation}	
	are attained by some specific functions.

	From now on, we shall introduce the nonlinear generalized Rayleigh quotients, see \cite{yavdat1}. Namely, we consider the functionals $R_n,R_e: W^{1,\Phi}_{0}(\Omega) \setminus\{0\} \to \mathbb{R}$ 
	associated with the parameter $\lambda>0$ in the following form 
	\begin{equation}\label{Rn1}
		R_n(u)=\dfrac{\int_{\Omega} \phi(|\nabla u|) |\nabla u|^2 dx- \|u\|_p^p}{\|u\|^q_{q,a}}, \,\, u \in W^{1,\Phi}_{0}(\Omega) \setminus \{0\}
	\end{equation}
	and
	\begin{equation}\label{Re2}
		{R}_e(u)=\dfrac{ \int_{\Omega} \Phi(|\nabla u|) dx - \frac{1}{p}\|u\|_p^p}{\frac{1}{q}\|u\|^q_{q,a}}, \,\, u \in W^{1,\Phi}_{0}(\Omega) \setminus \{0\},
	\end{equation}
	where $\|u\|^q_{q,a}:=\int_\Omega a|u|^qdx$.
	The main idea here is to find $u \in X\setminus \{0\}$ such that $R_n(u) = \lambda$. More specifically, we obtain the following statement:
	\begin{equation}\label{b11}
		u \in \mathcal{N}_{\lambda} \, \, \, \mbox{if and only if} \,\, \,   R_n(u) = \lambda, u \in W^{1,\Phi}_{0}(\Omega) \setminus \{0\}
	\end{equation}
	and 
	\begin{equation}\label{b111}
		J_{\lambda}(u) = 0 \, \, \mbox{if and only if} \,\, \,   R_e(u) = \lambda, u \in W^{1,\Phi}_{0}(\Omega)\setminus \{0\}.
	\end{equation}
	The functional given in \eqref{Re2} is used in order to decide the sign for solution obtained as a minimizer for the functional $J_\lambda$ restricted to $\mathcal{N}_{\lambda}^-$. Under this condition, we need to control the size of $\lambda \in (0, \lambda^*)$ using an auxiliary extreme. To do that we need to consider some extra functionals. Namely, we consider the $C^1$ functionals $\Lambda_e,\Lambda_n:  W^{1,\Phi}_{0}(\Omega) \setminus \{0\} \rightarrow \mathbb{R}$ in the following form:
	\begin{equation*}
		\Lambda_e(u) :=\max_{t>0} R_e(tu) \, \, \mbox{and} \,\,
		\Lambda_n(u) =\max_{t>0} R_n(tu).
	\end{equation*}
	These functionals give us sharp conditions taking into account the Nehari method for the existence of weak solutions $u \in X$ for our main problem with $\lambda \in (0, \lambda^*)$. In order to do that we define the following extreme:	
	\begin{eqnarray}  \label{lambdae}
		\lambda_*:=\inf_{u\in  W^{1,\Phi}_{0}(\Omega)\setminus\{0\}} \Lambda_e(u)
	\end{eqnarray}
	and
	\begin{eqnarray} \label{lambdan}
		\lambda^*:=\inf_{u\in  W^{1,\Phi}_{0}(\Omega) \setminus\{0\}} \Lambda_n(u).
	\end{eqnarray}
	
	\begin{theorem}\label{th1}
		Suppose that assumptions $(\phi_1)-(\phi_4)$ and \ref{paper4f1}--\ref{paper4f4} are satisfied. Then $0 < \lambda_* < \lambda^* < \infty$. Furthermore,  Problem \eqref{pi} admits at least one positive ground state solution $u \in X$ for each $\lambda \in (0, \lambda^*)$. The solutions $u \in X$ satisfies the following properties:
		\begin{itemize}
			\item[i)] $J_{\lambda}''(u)(u, u) > 0, J_{\lambda}(u) < 0, u \in \mathcal{N}_\lambda^+$;
			\item[ii)] There holds $\|u\| \to 0$ as $\lambda \to 0$.
		\end{itemize} 
	\end{theorem}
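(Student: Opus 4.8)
The plan is to run a fibering (Nehari) analysis along the rays $t\mapsto tu$ and to encode everything in the two one-dimensional maps $\psi_u(t):=R_n(tu)$ and $\varphi_u(t):=R_e(tu)$, $t>0$. First I would record the elementary consequences of $(\phi_4)$, namely $\ell\,\Phi(s)\le\phi(s)s^2\le m\,\Phi(s)$ together with the usual two-sided power bounds for $\Phi$; with \ref{paper4f1} these give $\psi_u(0^+)=\varphi_u(0^+)=0$ and $\psi_u(t),\varphi_u(t)\to-\infty$ as $t\to\infty$. The key structural fact is that $\psi_u$ has a unique critical point $t_n(u)>0$, hence a global maximum: writing $\psi_u'(t)=c(t,u)\big[\int_\Omega|\nabla u|^{p}g(t|\nabla u|)\,dx-(p-q)\|u\|_p^p\big]$ with $c(t,u)>0$ and $g(s)=\big((2-q)\phi(s)+s\phi'(s)\big)/s^{p-2}$, assumption \ref{paper4f4} makes $t\mapsto\int_\Omega|\nabla u|^{p}g(t|\nabla u|)\,dx$ strictly decreasing, so the bracket changes sign exactly once; thus $\Lambda_n(u)=\psi_u(t_n(u))>0$ and $\Lambda_n,t_n$ are $C^1$. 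From $J_\lambda(tu)=\tfrac{t^q}{q}\|u\|_{q,a}^q(\varphi_u(t)-\lambda)$ and $J_\lambda'(tu)(tu)=t^q\|u\|_{q,a}^q(\psi_u(t)-\lambda)$ I get the identity $\varphi_u'(t)=\tfrac{q}{t}\big(\psi_u(t)-\varphi_u(t)\big)$, equivalently $\varphi_u(t)=\tfrac{q}{t^q}\int_0^t s^{q-1}\psi_u(s)\,ds$. Since $\psi_u$ increases then decreases, $\varphi_u$ is increasing on $(0,t_n(u))$, and on $(t_n(u),\infty)$ the function $\psi_u-\varphi_u$ is strictly decreasing at each of its zeros; hence $\varphi_u$ has a unique critical point $t_e(u)>t_n(u)$ and $\Lambda_e(u)=\varphi_u(t_e(u))=\psi_u(t_e(u))<\psi_u(t_n(u))=\Lambda_n(u)$. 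So $\Lambda_e(u)<\Lambda_n(u)$ for every $u\neq 0$.

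Second, for $0<\lambda_*<\lambda^*<\infty$: the bound $\lambda^*<\infty$ is immediate by evaluating $\Lambda_n$ at one function; the bound $\lambda_*>0$ follows by scaling (reduce to $\|u\|=1$, so $\int_\Omega\Phi(|\nabla u|)\,dx=1$) and a fixed small dilation $t_0$, estimating $R_e(t_0u)\ge\big(t_0^{m}-\tfrac{t_0^{p}}{p}\|u\|_p^p\big)\big/\tfrac{t_0^{q}}{q}\|u\|_{q,a}^q$ and bounding $\|u\|_p,\|u\|_{q,a}$ via the Orlicz--Sobolev embedding and \ref{paper4f3}. The strict inequality $\lambda_*<\lambda^*$ is the crux: the pointwise gap $\Lambda_e(u)<\Lambda_n(u)$ does not by itself pass to the infimum, so I would show $\lambda^*$ is attained at some $u^*\neq 0$ and then conclude $\lambda_*\le\Lambda_e(u^*)<\Lambda_n(u^*)=\lambda^*$. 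Attainment is where the nonhomogeneity bites, since $u\mapsto\int_\Omega\phi(|\nabla u|)|\nabla u|^2\,dx$ need not be weakly lower semicontinuous; I would circumvent this by writing $\lambda^*=\inf\{R_n(v):v\in\mathcal Z\}$ with $\mathcal Z=\{v\neq0:\int_\Omega|\nabla v|^{p}g(|\nabla v|)\,dx=(p-q)\|v\|_p^p\}$, so that on $\mathcal Z$ the numerator of $R_n$ equals $R_n(v)\|v\|_{q,a}^q+\|v\|_p^p$, and then using the compact embeddings ($q,p<\ell^*$), the $(S_+)$-property of $-\Delta_\Phi$, and $(\phi_3)$ to show a minimizing sequence stays bounded, does not vanish, and converges strongly. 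This step is what I expect to be the main obstacle.

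Third, fix $\lambda\in(0,\lambda^*)$. Since $\lambda<\lambda^*\le\Lambda_n(u)$ for all $u$, the equation $R_n(tu)=\lambda$ has exactly two roots $t_+(u)<t_n(u)<t_-(u)$; hence $\mathcal N_\lambda^0=\emptyset$, $\mathcal N_\lambda=\mathcal N_\lambda^+\sqcup\mathcal N_\lambda^-$, each ray meets $\mathcal N_\lambda^\pm$ exactly once, and $\mathcal N_\lambda^\pm$ are relatively open and closed $C^1$-submanifolds on which a constrained critical point of $J_\lambda$ is a free critical point. On $\mathcal N_\lambda$ one has, using $(\phi_4)$, $J_\lambda(u)\ge\lambda\big(\tfrac1m-\tfrac1q\big)\|u\|_{q,a}^q+\big(\tfrac1m-\tfrac1p\big)\|u\|_p^p$, which with \ref{paper4f3} and $q<p$ shows $J_\lambda$ is bounded below and coercive there; and since $t\mapsto J_\lambda(tu)$ decreases on $(0,t_+(u))$ from the value $0$, we get $J_\lambda(t_+(u)u)<0$, so $c_{\mathcal N^+}<0$. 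Taking a minimizing sequence for $c_{\mathcal N^+}$, it is bounded, and by Ekeland's principle on the $C^1$-manifold it is a Palais--Smale sequence for $J_\lambda$; the compact embeddings kill the lower-order terms and the $(S_+)$-property upgrades weak convergence to strong convergence to some $u_0$; since $c_{\mathcal N^+}<0$, $u_0\neq 0$, hence $u_0\in\mathcal N_\lambda^+$, $J_\lambda(u_0)=c_{\mathcal N^+}$ and $J_\lambda'(u_0)=0$. Replacing $u_0$ by $|u_0|$ (which lies in $\mathcal N_\lambda^+$ with the same energy) and invoking the strong maximum principle for the $\Phi$-Laplacian yields a positive solution; it is a ground state because every nontrivial solution lies on $\mathcal N_\lambda$ and, for $v\in\mathcal N_\lambda^-$, $J_\lambda(v)$ is the local maximum of $t\mapsto J_\lambda(tv/\|v\|)$, hence $\ge c_{\mathcal N^+}$. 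This also gives property i), since $u_0\in\mathcal N_\lambda^+$ and $J_\lambda(u_0)=c_{\mathcal N^+}<0$.

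Finally, property ii): write the solution as $u_\lambda=\|u_\lambda\|v_\lambda$ with $\|v_\lambda\|=1$; then $u_\lambda\in\mathcal N_\lambda^+$ forces $\|u_\lambda\|=t_+(v_\lambda)$, the smaller root of $\psi_{v_\lambda}(t)=\lambda$. Using $\int_\Omega\Phi(|\nabla v_\lambda|)\,dx=1$ and the embeddings one obtains a universal bound $\psi_v(t)\ge c_1 t^{\,m-q}$ for $0<t\le t_1$ (constants independent of $v$ on the unit sphere), whence $\lambda=\psi_{v_\lambda}(t_+(v_\lambda))\ge c_1\,t_+(v_\lambda)^{m-q}$, so $\|u_\lambda\|=t_+(v_\lambda)\le(\lambda/c_1)^{1/(m-q)}\to 0$ as $\lambda\to 0$, which is ii). Besides the attainment issue flagged above, the remaining care points are the verification of the $(S_+)$-property and of the passage from a minimizing sequence to a Palais--Smale sequence in the nonhomogeneous Orlicz setting.
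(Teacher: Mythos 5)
Your proposal is correct and follows the same overall architecture as the paper: the nonlinear Rayleigh quotients $R_n,R_e$, the uniqueness of the critical points $t_n(u),t_e(u)$ of the fibering maps via \ref{paper4f4}, the pointwise gap $\Lambda_e<\Lambda_n$ combined with attainment of $\lambda^*$ to get $\lambda_*<\lambda^*$, emptiness of $\mathcal{N}_\lambda^0$ for $\lambda<\lambda^*$, coercivity of $J_\lambda$ on $\mathcal{N}_\lambda$, minimization on $\mathcal{N}_\lambda^+$, Lagrange multipliers, and the maximum principle. The differences are in three technical steps. (1) For attainment of $\lambda^*$ and for the strong convergence of minimizing sequences on $\mathcal{N}_\lambda^+$, the paper does not use Ekeland, Palais--Smale sequences or the $(S_+)$ property: it asserts that $u\mapsto R_n(u)$ and hence $\Lambda_n$ are weakly lower semicontinuous and runs a direct contradiction argument along the fibering map (if $u_k\rightharpoonup u$ but not strongly, then $R_n(u)<\lambda$, so the projection $t^{n,+}(u)u$ has strictly smaller energy than $c_{\mathcal{N}_\lambda^+}$). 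Your route through the constraint set $\mathcal Z=\{t_n(v)=1\}$ and the $(S_+)$ property is a legitimate alternative, and it sidesteps the weak lower semicontinuity of $u\mapsto\int_\Omega\phi(|\nabla u|)|\nabla u|^2\,dx$, which is exactly the point you rightly flag as delicate in the nonhomogeneous setting. (2) For the uniqueness of $t_e(u)$ you use the identity $\varphi_u'=\tfrac{q}{t}(\psi_u-\varphi_u)$ to transfer uniqueness from $\psi_u$ to $\varphi_u$ (at any zero of $\psi_u-\varphi_u$ in $(t_n(u),\infty)$ its derivative equals $\psi_u'<0$); the paper instead proves a separate monotonicity lemma for $t\mapsto(\phi(t)t^2-q\Phi(t))/t^p$ by integrating \ref{paper4f4} twice. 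Your version is shorter and uses one less lemma. (3) For $\|u\|\to0$ as $\lambda\to0$ the paper combines the equation with the second-order inequality $J_\lambda''(u)(u,u)>0$ to obtain $\int_\Omega\Phi(|\nabla u|)\,dx\le c\lambda\|u\|^q$, while you use the uniform lower bound $\psi_v(t)\ge c_1t^{m-q}$ on the unit sphere for small $t$; both yield the same rate $\|u\|\lesssim\lambda^{1/(m-q)}$ (or $\lambda^{1/(\ell-q)}$). No gaps; your ground-state argument (comparing $c_{\mathcal N^+}<0$ with $J_\lambda(v)\ge J_\lambda(t^{n,+}(v)v)\ge c_{\mathcal N^+}$ for $v\in\mathcal N_\lambda^-$) is also the standard one and is consistent with the paper.
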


	\begin{theorem}\label{th2}
		Suppose that assumptions $(\phi_1)-(\phi_4)$ and \ref{paper4f1}--\ref{paper4f4} are satisfied. Then Problem \eqref{pi} admits at least one positive solution $v \in X$ for each $\lambda \in (0, \lambda^*)$. The solution $v \in X$ satisfies also the following properties:
		\begin{itemize}
			\item[i)] $J_{\lambda}''(v)(v, v) < 0, v \in \mathcal{N}_\lambda^-$;
			\item[ii)] There holds $J_{\lambda}(v) > 0$ for each $\lambda \in (0, \lambda_*)$;
			\item[iii)] There holds $J_{\lambda}(v) = 0$ for $\lambda = \lambda_*$;
			\item[iv)] There holds $J_{\lambda}(v) < 0$ for each $\lambda \in(\lambda_*, \lambda^*)$.
		\end{itemize}  
	\end{theorem}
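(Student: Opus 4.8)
The plan is to obtain $v$ as a minimizer of $J_\lambda$ over the manifold $\mathcal{N}_\lambda^-$ and then to read off (i)--(iv) from the fibering picture already set up for Theorem~\ref{th1}. Recall from that analysis that, for $\lambda\in(0,\lambda^*)$, one has $\mathcal{N}_\lambda^0=\emptyset$, so $\mathcal{N}_\lambda^-$ is a $C^1$ submanifold of $W^{1,\Phi}_{0}(\Omega)$ which is a natural constraint for $J_\lambda$; moreover, for every $u\in W^{1,\Phi}_{0}(\Omega)\setminus\{0\}$ the fiber $t\mapsto J_\lambda(tu)$ is decreasing near $0$, has a unique local minimum at some $t_1(u)$ (lying on $\mathcal{N}_\lambda^+$) and a unique local maximum at $t_2(u)>t_1(u)$ (lying on $\mathcal{N}_\lambda^-$), and diverges to $-\infty$; equivalently, by \eqref{b11}, $t_2(u)u$ is the larger root of $R_n(tu)=\lambda$.

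I would first record the analytic ingredients. Using $\ell\,\Phi(t)\le t^2\phi(t)\le m\,\Phi(t)$ from $(\phi_4)$, the Nehari identity, the embedding $W^{1,\Phi}_{0}(\Omega)\hookrightarrow L^p(\Omega)$ and H\"older with exponent $\ell/(\ell-q)$ from \ref{paper4f3}, one gets on $\mathcal{N}_\lambda$ the estimate $J_\lambda(u)\ge(1-\tfrac mp)\int_\Omega\Phi(|\nabla u|)\,dx-C\lambda\|u\|^q$, which is coercive since $m<p$ and $q<\ell$; in particular $c_{\mathcal{N}^-}$ is finite. Also, on $\mathcal{N}_\lambda$ one has $J_\lambda''(u)(u,u)=\int_\Omega\big[(2-q)\phi(|\nabla u|)|\nabla u|^2+\phi'(|\nabla u|)|\nabla u|^3\big]\,dx-(p-q)\|u\|_p^p$, and, since $\phi'(t)t\ge(\ell-2)\phi(t)$, the requirement $J_\lambda''(u)(u,u)<0$ forces $(\ell-q)\ell\int_\Omega\Phi(|\nabla u|)\,dx<(p-q)C\|u\|^p$; comparing the modular with the norm yields a uniform lower bound $\|u\|\ge\delta>0$ on $\mathcal{N}_\lambda^-$.

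Next comes the attainment of $c_{\mathcal{N}^-}$ together with criticality. Apply Ekeland's variational principle on $\mathcal{N}_\lambda^-$ to produce $(u_n)\subset\mathcal{N}_\lambda^-$ with $J_\lambda(u_n)\to c_{\mathcal{N}^-}$ and $J_\lambda'(u_n)\to0$ in the dual (the normal component being controlled because $\mathcal{N}_\lambda^0=\emptyset$). By coercivity $(u_n)$ is bounded, so $u_n\rightharpoonup v$ along a subsequence; the compactness of the embeddings into $L^p(\Omega)$ and $L^q(\Omega,a\,dx)$ (where $p<\ell^*$ is used) together with the $(S_+)$-property of $-\Delta_\Phi$ upgrade this to strong convergence $u_n\to v$ in $W^{1,\Phi}_{0}(\Omega)$. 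The bound $\|u_n\|\ge\delta$ gives $v\neq0$, and $v\in\overline{\mathcal{N}_\lambda^-}=\mathcal{N}_\lambda^-$ (as $\mathcal{N}_\lambda^0=\emptyset$) with $J_\lambda(v)=c_{\mathcal{N}^-}$ and $J_\lambda'(v)=0$; in particular $J_\lambda''(v)(v,v)<0$, which is item (i). Since $\Phi$ is not homogeneous this passage to the limit is precisely where the difficulty lies, and it is the step I expect to be the main obstacle: there is no scaling invariance to exploit, and one must rely on the $(S_+)$-property, on subcritical compactness, and on the fact that the Ekeland sequence can escape neither $\{0\}$ (excluded by $\|u_n\|\ge\delta$) nor the ``boundary'' $\mathcal{N}_\lambda^0$ (which is empty). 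Finally, the modular $\int_\Omega\Phi(|\nabla v|)\,dx$ and the norms $\|v\|_p$, $\|v\|_{q,a}$ are unchanged under $v\mapsto|v|$, so $|v|\in\mathcal{N}_\lambda^-$ with $J_\lambda(|v|)=c_{\mathcal{N}^-}$ and we may take $v\ge0$; the $C^{1,\alpha}$-regularity theory for the $\Phi$-Laplacian together with the Harnack inequality / strong maximum principle (the right-hand side $\lambda a(x)v^{q-1}+v^{p-1}$ being nonnegative) then yield $v>0$ in $\Omega$.

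It remains to determine the sign of $J_\lambda(v)=c_{\mathcal{N}^-}$, which is where \eqref{Re2}, \eqref{lambdae} and the Rayleigh quotients come in. From \eqref{Re2} there is the identity $J_\lambda(u)=\tfrac1q\|u\|_{q,a}^q\big(R_e(u)-\lambda\big)$, so $\operatorname{sign}J_\lambda(u)=\operatorname{sign}\big(R_e(u)-\lambda\big)$, cf. \eqref{b111}. Differentiating $R_e(tu)$ in $t$ one gets the crucial relation $\operatorname{sign}\tfrac{d}{dt}R_e(tu)=\operatorname{sign}\big(R_n(tu)-R_e(tu)\big)$; combined with the unimodality of $t\mapsto R_n(tu)$ and $t\mapsto R_e(tu)$ (ensured by $(\phi_4)$ and \ref{paper4f4}) this shows that the maximum point $t_n(u)$ of $R_n(tu)$ precedes the maximum point $t_e(u)$ of $R_e(tu)$, that $R_n(t_e(u)u)=R_e(t_e(u)u)=\Lambda_e(u)$, and hence that $t_2(u)>t_e(u)$, $t_2(u)=t_e(u)$, or $t_2(u)<t_e(u)$ according as $\lambda<\Lambda_e(u)$, $\lambda=\Lambda_e(u)$, or $\lambda>\Lambda_e(u)$. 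Recalling that $\Lambda_e$ is constant along rays, this gives, for every $w\in\mathcal{N}_\lambda^-$,
\[
\operatorname{sign}J_\lambda(w)=\operatorname{sign}\big(\Lambda_e(w)-\lambda\big).
\]
Now use \eqref{lambdae}: if $\lambda<\lambda_*$ then $\Lambda_e(w)>\lambda$ for all $w\in\mathcal{N}_\lambda^-$, so $J_\lambda>0$ on $\mathcal{N}_\lambda^-$ and $J_\lambda(v)=c_{\mathcal{N}^-}>0$, which is (ii); if $\lambda_*<\lambda<\lambda^*$ there is $w\in\mathcal{N}_\lambda^-$ with $\Lambda_e(w)<\lambda$, whence $c_{\mathcal{N}^-}\le J_\lambda(w)<0$ and $J_\lambda(v)<0$, which is (iv); and for $\lambda=\lambda_*$ one has $J_\lambda\ge0$ on $\mathcal{N}_\lambda^-$, while taking $w_n\in\mathcal{N}_\lambda^-$ with $\Lambda_e(w_n)\to\lambda_*$ and using the continuity of $\Lambda_e$ (or its attainment) forces $c_{\mathcal{N}^-}=0$, hence $J_{\lambda_*}(v)=0$, which is (iii).
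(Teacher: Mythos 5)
Your proposal is correct in outline and reaches all four items, but it takes a genuinely different route from the paper at two points. For the attainment of $c_{\mathcal{N}^-}$ the paper does not use Ekeland's principle: Proposition \ref{assim} takes a plain minimizing sequence, shows the weak limit $v$ is nontrivial via the uniform lower bound $\|u\|\geq c$ on $\mathcal{N}_\lambda^-$ (Proposition \ref{dist-zero}), and obtains strong convergence by contradiction --- if $v_k\not\to v$, weak lower semicontinuity of $R_n$ gives $R_n(v)<\lambda$, one reprojects $v$ onto $\mathcal{N}_\lambda^-$ via the fibering map and produces a competitor with energy strictly below $c_{\mathcal{N}^-}$. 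Criticality is then read off at the limit point by the Lagrange multiplier theorem (Proposition \ref{lagrange}), where $J_\lambda''(v)(v,v)\neq 0$ since $\mathcal{N}_\lambda^0=\emptyset$, so the multiplier is exactly zero with no limiting procedure. Your Ekeland $+$ $(S_+)$ route can be made to work, but your parenthetical ``the normal component being controlled because $\mathcal{N}_\lambda^0=\emptyset$'' is the delicate point: to kill the multipliers $\theta_n$ you need $|J_\lambda''(u_n)(u_n,u_n)|$ bounded away from zero \emph{along the sequence}, and emptiness of $\mathcal{N}_\lambda^0$ alone does not give a uniform bound; one must exploit the gap $\lambda<\lambda^*\leq\Lambda_n$ quantitatively, or simply avoid Ekeland as the paper does. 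For the sign of $c_{\mathcal{N}^-}$ your argument is actually more self-contained than the paper's: you derive $\operatorname{sign}J_\lambda(w)=\operatorname{sign}\bigl(\Lambda_e(w)-\lambda\bigr)$ on $\mathcal{N}_\lambda^-$ from the identity $R_n(tu)-R_e(tu)=\frac{t}{q}\frac{d}{dt}R_e(tu)$, the ordering $t_n(u)<t_e(u)$ and the attainment of $\lambda_*$ (Proposition \ref{marcos2}), whereas the paper's Proposition \ref{15} only invokes monotonicity of $\lambda\mapsto J_\lambda(u)$ and an external reference; your version of this step is correct as written and fits the Rayleigh-quotient machinery the paper sets up.
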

	Now, using Theorems \ref{th1} and \ref{th2} we can state the following result:
	\begin{corollary}\label{cor}
		Suppose that assumptions $(\phi_1)-(\phi_4)$ and \ref{paper4f1}--\ref{paper4f4} are satisfied. Then  Problem \eqref{pi} has at least two positive solutions provided that $\lambda \in (0, \lambda^*)$.
	\end{corollary}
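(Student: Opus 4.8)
The plan is to deduce the corollary directly from Theorems \ref{th1} and \ref{th2} together with the structure of the Nehari decomposition. Fix $\lambda \in (0,\lambda^*)$. By Theorem \ref{th1}, Problem \eqref{pi} possesses a positive ground state solution $u \in X$ with $u \in \mathcal{N}_\lambda^+$, so in particular $J_\lambda''(u)(u,u) > 0$ (and $J_\lambda(u) < 0$). By Theorem \ref{th2}, Problem \eqref{pi} also possesses a positive solution $v \in X$ with $v \in \mathcal{N}_\lambda^-$, so $J_\lambda''(v)(v,v) < 0$. Both $u$ and $v$ are weak solutions of \eqref{pi}, both are positive in $\Omega$, and the only thing left to verify is that they are genuinely distinct functions.

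For the distinctness I would simply invoke the definitions of $\mathcal{N}_\lambda^+$ and $\mathcal{N}_\lambda^-$: the former consists of functions $w$ with $J_\lambda''(w)(w,w) > 0$ and the latter of functions $w$ with $J_\lambda''(w)(w,w) < 0$, whence $\mathcal{N}_\lambda^+ \cap \mathcal{N}_\lambda^- = \emptyset$. Since $u \in \mathcal{N}_\lambda^+$ while $v \in \mathcal{N}_\lambda^-$, it follows at once that $u \neq v$. Therefore, for every $\lambda \in (0,\lambda^*)$, Problem \eqref{pi} admits at least two distinct positive solutions, which is precisely the assertion of Corollary \ref{cor}.

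There is no substantive obstacle at the level of the corollary itself; the real work has already been carried out in establishing Theorems \ref{th1} and \ref{th2}. The one point worth flagging — and which is guaranteed by those theorems rather than needing a new argument here — is that, because $\lambda < \lambda^*$, the fibering maps $t \mapsto J_\lambda(tw)$ have no inflection points, so the minimizers realizing \eqref{ee1} and \eqref{ee2} lie in the open sets $\mathcal{N}_\lambda^+$ and $\mathcal{N}_\lambda^-$ and do not fall into $\mathcal{N}_\lambda^0$. This is what makes the Lagrange multiplier rule applicable and turns those minimizers into true critical points of $J_\lambda$; once this is in hand, the conclusion reduces to the disjointness observation above.
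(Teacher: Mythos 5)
Your proposal is correct and follows exactly the paper's route: the paper also deduces the corollary immediately from Theorems \ref{th1} and \ref{th2}, and your additional remark that $u\in\mathcal{N}_\lambda^+$ and $v\in\mathcal{N}_\lambda^-$ forces $u\neq v$ is the (implicit) distinctness argument made explicit.
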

	
	In what follows we shall consider the case $\lambda \in [\lambda^*, \infty)$. In order to do that we shall explore a sequence $(\lambda_k)_{k \in \mathbb{N}} \in \mathbb{R}$ such that $\lambda_k < \lambda^*$ for each $k \in \mathbb{N}$ and $\lambda_k \to \lambda^*$ as $k \to \infty$.  Furthermore, we need to consider some extra assumptions. More specifically, we assume the following extra assumptions:
	\begin{enumerate}
		\item[$(\phi_5)$] The function $S:\mathbb{R}\to [0,\infty)$ defined by $S(t)=p\Phi(t)-\phi(t)t^2, t > 0,$ is convex;
	\end{enumerate}
	\begin{enumerate}[label=($H_4$),ref=$(H_4)$] 
		\item \label{paper4f5} $a\in L^\infty_{loc}(\Omega)$ and $\ell\frac{m-q}{\ell-q}<p<\ell^*$.
	\end{enumerate}
	Under these conditions, we can state our next main result as follows:	
	\begin{theorem}\label{th3}
		Suppose that assumptions $(\phi_1)-(\phi_5)$ and \ref{paper4f1}--\ref{paper4f5} are satisfied. Assume also that $\lambda = \lambda^*$ and Problem \eqref{pi} does not admit weak solutions in the set $\mathcal{N}_\lambda^0$. Then Problem \eqref{pi} admits at least two positive solution. 
	\end{theorem}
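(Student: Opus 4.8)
\noindent\emph{Proof strategy.} The plan is to obtain the two positive solutions at $\lambda=\lambda^*$ as strong limits of the solutions furnished by Theorems~\ref{th1} and \ref{th2} along a sequence $\lambda_k\uparrow\lambda^*$, $\lambda_k<\lambda^*$ (as announced before the statement). Fix such a sequence; for $k$ large, $\lambda_k\in(\lambda_*,\lambda^*)$. By Theorem~\ref{th1} there is, for each $k$, a positive ground state $u_k\in\mathcal{N}_{\lambda_k}^+$ with $J_{\lambda_k}(u_k)=\inf_{\mathcal{N}_{\lambda_k}^+}J_{\lambda_k}<0$ and $J_{\lambda_k}''(u_k)(u_k,u_k)>0$; by Theorem~\ref{th2} there is a positive solution $v_k\in\mathcal{N}_{\lambda_k}^-$ with $J_{\lambda_k}''(v_k)(v_k,v_k)<0$ and, since $\lambda_k>\lambda_*$, also $J_{\lambda_k}(v_k)<0$. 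The first task is to show that $(u_k)$ and $(v_k)$ are bounded in $W^{1,\Phi}_0(\Omega)$ and bounded away from $0$. Boundedness follows from the Nehari identity $\int_\Omega\phi(|\nabla w|)|\nabla w|^2\,dx=\lambda_k\|w\|^q_{q,a}+\|w\|_p^p$, the elementary consequence $\ell\Phi(t)\le\phi(t)t^2\le m\Phi(t)$ of $(\phi_4)$, the identity $pJ_{\lambda_k}(w)-J'_{\lambda_k}(w)w=\int_\Omega S(|\nabla w|)\,dx-\lambda_k\tfrac{p-q}{q}\|w\|^q_{q,a}$, the sign condition $J_{\lambda_k}(u_k),J_{\lambda_k}(v_k)<0$, H\"older's inequality with \ref{paper4f3}, and the embedding $W^{1,\Phi}_0(\Omega)\hookrightarrow L^p(\Omega)$ (valid for $p<\ell^*$); the constants are uniform since $\lambda_k\le\lambda^*$, and here the convexity in $(\phi_5)$ and the sharper bound in \ref{paper4f5} supply the extra control. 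A uniform lower bound $\liminf_k\|v_k\|>0$ is the usual one on $\mathcal{N}^-$. For $u_k$ one compares energies with a fixed positive $w\in W^{1,\Phi}_0(\Omega)$: since $\lambda_k<\lambda^*\le\Lambda_n(w)$ there is $t_k>0$ with $t_kw\in\mathcal{N}_{\lambda_k}^+$, the fibering map $t\mapsto J_{\lambda_k}(tw)$ is strictly decreasing on $(0,t_k)$, and $t_k$ stays bounded away from $0$ because $R_n(tw)\to 0$ as $t\to 0^+$ (recall $q<\ell$); evaluating at a small fixed $t_0<t_k$ gives $J_{\lambda_k}(u_k)\le J_{\lambda_k}(t_0w)\le-\delta_0<0$ uniformly, so $u_k\not\to 0$.

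Passing to a subsequence, $u_k\rightharpoonup u_*$ and $v_k\rightharpoonup v_*$ in $W^{1,\Phi}_0(\Omega)$ and strongly in $L^q(\Omega)\cap L^p(\Omega)$ by the compact embeddings ($q,p<\ell^*$). Testing $J'_{\lambda_k}(u_k)=0$ with $u_k-u_*$, the lower-order terms tend to $0$, hence $\langle-\Delta_\Phi u_k+\Delta_\Phi u_*,\,u_k-u_*\rangle\to 0$; the $(S_+)$-property of $-\Delta_\Phi$ on $W^{1,\Phi}_0(\Omega)$ (a consequence of the strict monotonicity in $(\phi_2)$) then yields $u_k\to u_*$ strongly, and similarly $v_k\to v_*$. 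Letting $k\to\infty$ in $J'_{\lambda_k}(u_k)=0$, $J'_{\lambda_k}(v_k)=0$ and using $\lambda_k\to\lambda^*$ gives $J'_{\lambda^*}(u_*)=0=J'_{\lambda^*}(v_*)$; moreover $u_*,v_*\ne 0$ by the lower bounds, so $u_*,v_*\in\mathcal{N}_{\lambda^*}$ are nontrivial weak solutions of \eqref{pi} with $\lambda=\lambda^*$. Since $u_k,v_k>0$ we get $u_*,v_*\ge 0$, and then $L^\infty$-regularity (Moser iteration using $p<\ell^*$ and $a\in L^\infty_{loc}(\Omega)$ from \ref{paper4f5}) together with the strong maximum principle for the $\Phi$-Laplacian give $u_*,v_*>0$ in $\Omega$.

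It remains to show $u_*\ne v_*$. By the strong convergence in $W^{1,\Phi}_0(\Omega)$ and in $L^q\cap L^p$, and using $|\phi'(t)t|\le C\phi(t)$ from $(\phi_4)$ together with a Vitali argument for equi-integrability in the Orlicz setting, the map $w\mapsto J''_{\mu}(w)(w,w)$ is continuous along the two sequences, so
\[
J_{\lambda^*}''(u_*)(u_*,u_*)=\lim_{k}J_{\lambda_k}''(u_k)(u_k,u_k)\ge 0,\qquad
J_{\lambda^*}''(v_*)(v_*,v_*)=\lim_{k}J_{\lambda_k}''(v_k)(v_k,v_k)\le 0.
\]
Since $u_*$ and $v_*$ are weak solutions of \eqref{pi} and, by hypothesis, \eqref{pi} has no weak solution in $\mathcal{N}_{\lambda^*}^0$, neither belongs to $\mathcal{N}_{\lambda^*}^0$; combined with the signs above and $u_*,v_*\in\mathcal{N}_{\lambda^*}$, this forces $u_*\in\mathcal{N}_{\lambda^*}^+$ and $v_*\in\mathcal{N}_{\lambda^*}^-$. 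As $\mathcal{N}_{\lambda^*}^+\cap\mathcal{N}_{\lambda^*}^-=\emptyset$ we conclude $u_*\ne v_*$, so \eqref{pi} has at least two positive solutions for $\lambda=\lambda^*$. (Equivalently, if $u_*=v_*$ the common value would lie in $\mathcal{N}_{\lambda^*}^0$ and be a weak solution, contradicting the hypothesis.)

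The main obstacle is the first step: a positive lower bound for $\|u_k\|$ that is uniform as $\lambda_k\uparrow\lambda^*$. This is delicate precisely because $\mathcal{N}^+$-solutions degenerate to $0$ as $\lambda\to 0$ (Theorem~\ref{th1}(ii)), so the argument must exploit $\lambda^*>0$ together with the fine behaviour of the Rayleigh quotient $R_n$ and of the fibering maps near $\lambda^*$; this is where $(\phi_5)$ and \ref{paper4f5} carry their weight (the former also yielding the weak lower semicontinuity of $J_{\lambda^*}$ on $\mathcal{N}_{\lambda^*}$ via $pJ_{\lambda}|_{\mathcal{N}_\lambda}(u)=\int_\Omega S(|\nabla u|)\,dx-\lambda\tfrac{p-q}{q}\|u\|^q_{q,a}$). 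The other technical point is the equi-integrability that makes $w\mapsto J''_\mu(w)(w,w)$ pass to the limit, which is exactly what lets the nonexistence hypothesis in $\mathcal{N}_{\lambda^*}^0$ separate the two solutions.
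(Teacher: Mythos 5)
Your proposal is correct and follows essentially the same route as the paper: its Propositions \ref{sol1} and \ref{sol2} construct the two solutions exactly as you do, by passing to the limit along a sequence $\lambda_k\uparrow\lambda^*$ in the solutions furnished for $\lambda_k<\lambda^*$, landing in $\mathcal{N}_{\lambda^*}^{+}\cup\mathcal{N}_{\lambda^*}^{0}$ and $\mathcal{N}_{\lambda^*}^{-}\cup\mathcal{N}_{\lambda^*}^{0}$ respectively, after which the nonexistence hypothesis in $\mathcal{N}_{\lambda^*}^{0}$ separates them. The only cosmetic difference is that you derive the strong convergence from the $(S_+)$-property of $-\Delta_\Phi$, whereas the paper reuses the weak lower semicontinuity and fibering-map comparison of Proposition \ref{strong-N+}.
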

	
	At this stage, we shall consider the following positive number
	\begin{equation}\label{lambdabarra}
		\overline{\lambda} = \sup\{ \lambda \in [\lambda^*, \infty) : \,\, \mbox{Problem \eqref{pi} does not admits solution} \,\, u \in \mathcal{N}_{\lambda}^0 \}.
	\end{equation}
	
	\begin{cor}\label{cor2}
		Suppose that assumptions $(\phi_1)-(\phi_5)$ and \ref{paper4f1}--\ref{paper4f5} are satisfied. Assume also $\lambda \in [\lambda^*, \overline{\lambda})$ where $\lambda^* < \overline{\lambda}$. Then Problem \eqref{pi} has at least two positive solutions $u \in \mathcal{N}_{\lambda}^+$ and $v \in \mathcal{N}_{\lambda}^-$.	
	\end{cor}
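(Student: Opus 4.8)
The plan is to extend the argument of Theorem~\ref{th3} from the single value $\lambda^*$ to the whole interval $[\lambda^*,\overline{\lambda})$, the crucial observation being that for such $\lambda$ the hypothesis of Theorem~\ref{th3} — that \eqref{pi} has no weak solution in $\mathcal{N}_\lambda^0$ — is built into the definition \eqref{lambdabarra} of $\overline{\lambda}$. Thus the first step is to check that $(P_\lambda)$ admits no weak solution on $\mathcal{N}_\lambda^0$ for \emph{every} $\lambda\in[\lambda^*,\overline{\lambda})$. For $\lambda=\lambda^*$ this is exactly the hypothesis under which Theorem~\ref{th3} applies; for $\lambda\in(\lambda^*,\overline{\lambda})$ it is forced by the definition of $\overline{\lambda}$ as a supremum once one knows that the collection of parameters appearing in \eqref{lambdabarra} is the interval $[\lambda^*,\overline{\lambda})$. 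To establish this interval structure I would exploit the identity
\[
\frac{d}{dt}R_n(tu)\Big|_{t=1}=\frac{J_\lambda''(u)(u,u)}{\|u\|^q_{q,a}}\qquad\text{for }u\in\mathcal{N}_\lambda ,
\]
which shows that $u\in\mathcal{N}_\lambda^0$ exactly when $t=1$ is the maximum point of $t\mapsto R_n(tu)$, equivalently $\lambda=\Lambda_n(u)$; combined with $(\phi_5)$ and \ref{paper4f5} — which make the maximum point unique and pin down the shape of the fibers — this, together with the nonexistence estimates already developed to prove $\lambda^*<\overline{\lambda}$, yields that $(P_\lambda)$ has no solution on $\mathcal{N}_\lambda^0$ throughout $[\lambda^*,\overline{\lambda})$.

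Granting this, fix $\lambda\in[\lambda^*,\overline{\lambda})$; the case $\lambda=\lambda^*$ is Theorem~\ref{th3}, so assume $\lambda\in(\lambda^*,\overline{\lambda})$. Since $\lambda<\overline{\lambda}$ there are $u$ with $\Lambda_n(u)>\lambda$, and for each such $u$ the fiber $t\mapsto R_n(tu)$ crosses the level $\lambda$ at a point on its increasing branch and at a point on its decreasing branch, producing elements of $\mathcal{N}_\lambda^+$ and of $\mathcal{N}_\lambda^-$; hence both sets are nonempty. Writing $J_\lambda(u)=J_\lambda(u)-\tfrac1pJ_\lambda'(u)u$ on $\mathcal{N}_\lambda$ and using $(\phi_4)$ with $\ell\le m<p$, the functional $J_\lambda$ is coercive and bounded below on $\mathcal{N}_\lambda$, hence on $\overline{\mathcal{N}_\lambda^+}$ and on $\overline{\mathcal{N}_\lambda^-}$. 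Together with the weak lower semicontinuity of $u\mapsto\int_\Omega\Phi(|\nabla u|)\,dx$ (convexity) and the compactness of the embeddings $W^{1,\Phi}_0(\Omega)\hookrightarrow L^q(\Omega),L^p(\Omega)$ (subcriticality $q,p<\ell^*$), the infima $c_{\mathcal{N}^+}$ and $c_{\mathcal{N}^-}$ in \eqref{ee1}--\eqref{ee2} are attained at some $u\in\overline{\mathcal{N}_\lambda^+}$ and $v\in\overline{\mathcal{N}_\lambda^-}$; the $(S_+)$-property of $-\Delta_\Phi$ upgrades the minimizing sequences to strong convergence, so $u,v\in W^{1,\Phi}_0(\Omega)\setminus\{0\}$.

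The next step is to rule out that the minimizers escape to $\mathcal{N}_\lambda^0$. If, say, $u\in\mathcal{N}_\lambda^0$, then analysing the fibering map $\gamma_u(t)=J_\lambda(tu)$ — whose shape is governed by $(\phi_5)$ (convexity of $S(t)=p\Phi(t)-\phi(t)t^2$), $(H_3)$ and \ref{paper4f5} — one shows that $t=1$ is a degenerate maximum of $\gamma_u$, and a deformation (or implicit function) argument on $\mathcal{N}_\lambda$ then produces a weak solution of \eqref{pi} lying in $\mathcal{N}_\lambda^0$, contradicting the first step. Hence $u\in\mathcal{N}_\lambda^+$ and, likewise, $v\in\mathcal{N}_\lambda^-$. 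Since on these two sets the constraint functional $u\mapsto J_\lambda'(u)u$ has non-vanishing derivative, the Lagrange multiplier rule gives $J_\lambda'(u)=J_\lambda'(v)=0$, so $u$ and $v$ are weak solutions of \eqref{pi}, and $u\ne v$ because $\mathcal{N}_\lambda^+\cap\mathcal{N}_\lambda^-=\emptyset$. Finally, because $\int_\Omega\Phi(|\nabla|w||)\,dx=\int_\Omega\Phi(|\nabla w|)\,dx$, $\||w|\|_p=\|w\|_p$, $\||w|\|_{q,a}=\|w\|_{q,a}$, and $|w|$ lies in the same sign class of the fibering map as $w$, I may take $u,v\ge0$; then $L^\infty$- and $C^{1,\alpha}$-regularity for the $\Phi$-Laplacian, together with the Harnack inequality / strong maximum principle applied to $-\Delta_\Phi u=\lambda a(x)|u|^{q-2}u+|u|^{p-2}u\ge0$ (recall $a\ge a_0>0$), give $u,v>0$ in $\Omega$.

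I expect two points to carry the real difficulty. First, showing that the parameter set in \eqref{lambdabarra} is genuinely the interval $[\lambda^*,\overline{\lambda})$ (so that, in particular, $\lambda^*$ belongs to it and $\overline{\lambda}>\lambda^*$): this is where the non-homogeneity of $\Delta_\Phi$ is felt, since the fibers $R_n(t\cdot)$ are no longer ruled by a single power, and it is precisely the purpose of $(\phi_5)$ and of the restriction $\ell\frac{m-q}{\ell-q}<p<\ell^*$ in \ref{paper4f5} to force each fiber to have a single maximum and at most one inflection point. Second, the deformation argument excluding minimizers from $\mathcal{N}_\lambda^0$: without homogeneity one cannot simply normalise, so one must argue through the fine behaviour of $\gamma_u$ near its degenerate critical point, again using $(\phi_5)$, $(H_3)$ and \ref{paper4f5}. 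The remaining ingredients — coercivity, weak lower semicontinuity, the $(S_+)$-property, Lagrange multipliers, and the regularity/maximum-principle step — are standard in the Orlicz--Sobolev setting and mirror the proofs of Theorems~\ref{th1}--\ref{th3}.
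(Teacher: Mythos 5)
Your proposal follows the same overall skeleton as the paper --- (i) for $\lambda$ in the given range Problem \eqref{pi} has no solution on $\mathcal{N}_{\lambda}^0$, by the definition \eqref{lambdabarra} of $\overline{\lambda}$; (ii) the two-solution machinery of Theorem \ref{th3} then applies --- but you execute step (ii) differently. The paper (Propositions \ref{sol1}, \ref{sol2}, \ref{sol11}, \ref{sol22}) approximates: it takes $\lambda_k\nearrow\lambda$, uses the solutions already constructed at the levels $\lambda_k$, passes to a strong limit which is automatically a weak solution lying in $\mathcal{N}_{\lambda}^{\pm}\cup\mathcal{N}_{\lambda}^0$, and then invokes the nonexistence on $\mathcal{N}_{\lambda}^0$ to place it in $\mathcal{N}_{\lambda}^{\pm}$. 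You instead minimize $J_\lambda$ directly on $\mathcal{N}_{\lambda}^{+}$ and $\mathcal{N}_{\lambda}^{-}$ at the given $\lambda$ and, if a minimizer were to fall into $\mathcal{N}_{\lambda}^0$, use a deformation/implicit-function argument to show it would still be a critical point of $J_\lambda$, contradicting (i). Both routes are legitimate; the paper's limit argument gets the weak-solution property for free (at the cost of needing solutions for all smaller parameters, which is delicate once $\lambda_k>\lambda^*$), while your deformation step is the standard Tarantello-type argument and is nontrivial to carry out for a nonhomogeneous operator, as you acknowledge. Your $(S_+)$-based claim of strong convergence of a minimizing sequence is looser than the paper's mechanism (weak lower semicontinuity of $R_n$ plus the projections $t^{n,\pm}$), since $(S_+)$ normally requires the Euler--Lagrange information you do not yet have at that stage.

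One point deserves emphasis because it affects both your argument and the paper's: the definition of $\overline{\lambda}$ as a supremum does not by itself guarantee that \emph{every} $\lambda\in[\lambda^*,\overline{\lambda})$ belongs to the set in \eqref{lambdabarra}, nor that $\lambda^*$ itself does (which is what is needed to invoke Theorem \ref{th3} at $\lambda=\lambda^*$). You correctly flag that one must prove the parameter set is an interval containing $\lambda^*$, and you sketch a plausible mechanism via the fiber analysis and $(\phi_5)$, \ref{paper4f5}; the paper's Corollary \ref{esquema} simply asserts the contradiction ``due to the fact that $\lambda<\overline{\lambda}$''. So this is a shared gap rather than one you introduced, and your treatment of it is, if anything, the more honest of the two.
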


	\begin{cor}\label{cor3}
		Suppose that assumptions $(\phi_1)-(\phi_5)$ and \ref{paper4f1}--\ref{paper4f5} are satisfied. Assume also that $\lambda^* < \overline{\lambda}$. Then Problem $(P_{\lambda})$ has at least one positive solution whenever $\lambda = \overline{\lambda}$.	
	\end{cor}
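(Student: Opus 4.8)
The plan is to realize the solution at $\lambda=\overline{\lambda}$ as a strong limit of solutions living strictly below the threshold. Fix a sequence $(\lambda_k)_{k\in\mathbb N}\subset[\lambda^*,\overline{\lambda})$ with $\lambda_k\nearrow\overline{\lambda}$. Since $\lambda^*<\overline{\lambda}$, the very definition \eqref{lambdabarra} of $\overline{\lambda}$ says that Problem \eqref{pi} has no solution in $\mathcal{N}_{\lambda_k}^0$ for each $k$, so Corollary \ref{cor2} applies and furnishes a positive solution $v_k\in\mathcal{N}_{\lambda_k}^-$, which we take to be a minimizer of $J_{\lambda_k}$ over $\mathcal{N}_{\lambda_k}^-$. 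I would then prove that $(v_k)$ has a subsequence converging strongly in $X=W^{1,\Phi}_0(\Omega)$ to a nontrivial nonnegative weak solution $v$ of $(P_{\overline{\lambda}})$, and finally upgrade $v$ to a positive solution via the maximum principle.

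\emph{Step 1: uniform bounds.} First I would show that there are constants $0<\rho_0\le\rho_1<\infty$, independent of $k$, with $\rho_0\le\|v_k\|\le\rho_1$. The lower bound expresses the usual fact that $\mathcal{N}^-_\lambda$ stays uniformly away from the origin: combining $v_k\in\mathcal{N}^-_{\lambda_k}$, the Nehari identity $J'_{\lambda_k}(v_k)v_k=0$, the structural bounds $\ell\Phi(t)\le\phi(t)t^2\le m\Phi(t)$ and $\ell-1\le 1+t\phi'(t)/\phi(t)\le m-1$ coming from $(\phi_4)$, and the continuous embeddings $X\hookrightarrow L^p(\Omega)$ and $X\hookrightarrow L^\ell(\Omega)$, the parameter $\lambda_k$ cancels and one is left with an estimate forcing $\|v_k\|\ge\rho_0>0$. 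For the upper bound one uses that the value $c_{\mathcal{N}^-}$ associated with the parameter $\lambda$ stays bounded above for $\lambda\in[\lambda^*,\overline{\lambda})$ --- for instance by comparison with the fibering map of a fixed $w$ with $\Lambda_n(w)>\overline{\lambda}$, since $\lambda\mapsto\max_{t>0}J_\lambda(tw)$ is decreasing (its $\lambda$-derivative equals $-\tfrac1q t_\lambda^q\|w\|^q_{q,a}<0$, the $t$-derivative vanishing at the maximizer) --- and then feeds $J_{\lambda_k}(v_k)\le C$ together with $J'_{\lambda_k}(v_k)v_k=0$ into the inequality $J_{\lambda_k}(v_k)\ge(1-\tfrac mp)\int_\Omega\Phi(|\nabla v_k|)\,dx-\overline{\lambda}(\tfrac1q-\tfrac1p)\|v_k\|^q_{q,a}$, which together with $q<\ell\le m<p$ yields the bound on $\|v_k\|$.

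\emph{Step 2: passage to the limit and positivity.} By reflexivity of $X$, along a subsequence $v_k\rightharpoonup v$ in $X$, and by the compact embedding $X\hookrightarrow L^r(\Omega)$ for $r<\ell^*$ (applicable to $r=q,p$ by $(H_1)$) we get $v_k\to v$ in $L^q(\Omega)$, $L^p(\Omega)$ and a.e.\ in $\Omega$. Testing $J'_{\lambda_k}(v_k)=0$ with $v_k-v$ and using the strong convergence of the lower-order terms gives $\int_\Omega\phi(|\nabla v_k|)\nabla v_k\cdot\nabla(v_k-v)\,dx\to 0$, and the $(S_+)$-property of $-\Delta_\Phi$ on $X$ (valid under $(\phi_1)$--$(\phi_4)$) promotes this to $v_k\to v$ strongly in $X$. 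Passing to the limit in $J'_{\lambda_k}(v_k)=0$, and using $\lambda_k\to\overline{\lambda}$, we obtain $J'_{\overline{\lambda}}(v)=0$, i.e.\ $v$ solves $(P_{\overline{\lambda}})$ weakly; moreover $\|v\|=\lim_k\|v_k\|\ge\rho_0>0$, so $v\ne 0$, and $v\ge0$ since $v_k>0$ and $v_k\to v$ a.e.. Finally $v$ is bounded (Moser iteration, using $p<\ell^*$), hence of class $C^1(\overline{\Omega})$ by the regularity theory for the $\Phi$-Laplacian, and the Harnack inequality / strong maximum principle (with $a\ge a_0>0$) give $v>0$ in $\Omega$, which is the assertion.

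\emph{Main obstacle.} The delicate point is the compactness in Steps 1--2: the bounds $\rho_0\le\|v_k\|\le\rho_1$ must be uniform in $k$, that is, letting $\lambda\uparrow\overline{\lambda}$ must not push the $\mathcal{N}^-$-minimizers to $0$ nor to $\infty$. The lower bound is the cleaner half, since the parameter cancels in the $\mathcal{N}^-$-estimate; the uniform upper energy bound is where one genuinely uses that $\overline{\lambda}$ still lies in the regime where Corollary \ref{cor2} produces solutions, so that a fixed test function with $\Lambda_n(w)>\overline{\lambda}$ is available. With these bounds and the $(S_+)$-property in hand the conclusion follows; note in particular that one need not determine on which of the strata $\mathcal{N}^+_{\overline{\lambda}}$, $\mathcal{N}^-_{\overline{\lambda}}$ or $\mathcal{N}^0_{\overline{\lambda}}$ the limit $v$ sits, since it is a genuine critical point of $J_{\overline{\lambda}}$ in any case.
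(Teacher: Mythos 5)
Your proposal follows essentially the same route as the paper: fix a sequence $\lambda_k\nearrow\overline{\lambda}$ with $\lambda_k\in[\lambda^*,\overline{\lambda})$, invoke Corollary \ref{cor2} to produce solutions at each level $\lambda_k$, establish uniform bounds, and pass to the (strong) limit to obtain a nontrivial positive solution of $(P_{\overline{\lambda}})$. The only immaterial differences are that the paper's sketch works with the $\mathcal{N}^+_{\lambda_k}$-solutions and obtains strong convergence by the weak lower semicontinuity/projection contradiction of Propositions \ref{strong-N+} and \ref{sol1} rather than by the $(S_+)$-property; your write-up is in fact substantially more detailed than the paper's.
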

	
	\begin{remark}
		Assume \ref{paper4f1}--\ref{paper4f4}. Then, assuming that $\ell = m$, we obtain that $\lambda^* < \overline{\lambda}$. Under these conditions,  we mention that hypothesis \ref{paper4f1} implies that \ref{paper4f5} is satisfied. In general, assuming that $\ell < m$, the inequality $\lambda^* < \overline{\lambda}$ remains as an interesting open problem. The main difficulty here is to ensure that Problem \eqref{pi} does not admit a weak solution $u \in \mathcal{N}_\lambda^0$ with $\lambda \in (\lambda^*, \overline{\lambda})$ where $\overline{\lambda}$ is given by \eqref{lambdabarra}.  
	\end{remark}

	\begin{remark}
		It is important to stress that hypothesis $(\phi_5)$ is satisfied for  several $N$-functions $\Phi$. Namely, assuming that
		$$\ell-2\leq \frac{t(t\phi(t))''}{(t\phi(t))'}\leq m-2 $$
		holds we infer that hypothesis $(\phi_5)$ is verified. For this kind of assumptions we refer the reader to \cite{ed2}.
	\end{remark}	
	
	\begin{Ex}
		It is important to observe that our assumption \ref{paper4f4} is satisfied for the $N$-function $\Phi(t) = |t|^r/r, t > 0$ which generates the $r$-Laplacian operator with $1 < r < \infty$. Now, we consider the function $\phi_{r_1,r_2}(t) = |t|^{r_1-2} + |t|^{r_2 -2}, t \geq 0, 1 < q < r_1 < r_2 < p < \infty$. It is well known that the last function provides us the following quasilinear elliptic problem:
		\begin{equation}\label{aii}
			\left\{
			\begin{array}{lr}
				-\Delta_{r_1} u - \Delta_{r_2} u = \lambda a(x) |u|^{q-2}u + |u|^{p-2}u, & x\in\Omega,\\
				u = 0, & x \in \partial \Omega,
			\end{array}
			\right. \tag{$P_{r_1,r_2}$}
		\end{equation}
		For this function we observe that $\ell = r_1$ and $m = r_2$. It is important to stress that the quasilinear elliptic problem \eqref{aii} is named as the $(r_1,r_2)$-Laplacian operator. Here we refer the interested reader to \cite{Olimpio, Li, Marano} and references therein.  Now, for $\phi(t) = \phi_{r_1,r_2}(t)$ we obtain that
		\begin{equation*}
			t \mapsto	\dfrac{(2 - q) \phi_{r_1,r_2}(t) + \phi_{r_1,r_2}'(t)t}{t^{p-2}} =  (r_1 - q)t^{r_1 -p} + (r_2 - q) t^{r_2-p}, t > 0,
		\end{equation*}
		is a strictly decreasing function. Hence, the function $\phi_{r_1,r_2}$ satisfies our assumption \ref{paper4f4}. Now, we observe that 
		$$S(t)=\left(\frac{p}{r_1}-1\right)t^{r_1}+\left(\frac{p}{r_2}-1\right)t^{r_2}, t \geq 0.$$
		Hence, hypothesis $(\phi_5)$ is also verified. As a product, our main assumption is satisfied for $\phi(t) = \phi_{r_1,r_2}(t)$. As was told before this kind of function generates the so-called $(r_1, r_2)$- Laplacian operator. This operator is not homogeneous due to the fact that $r_1 < r_2$. Once again our assumptions can be applied for general $N$-functions $\Phi$ which are not homogeneous in general.
	\end{Ex}

	\begin{Ex} 
		Now, we consider another interesting example for a $N$-function which satisfies our main assumptions. Firstly, we consider $\phi(t) = \ln(1 + |t|), t \in \mathbb{R}$. For this example we obtain that $\ell = 2$ and $m = 3$ where 
		$$\Phi(t) = \int_{0}^t \phi(s)s ds=\frac{t^2}{2}\ln(1+t)-\frac{t^2}{4}+\frac{t}{2}-\frac{1}{2}\ln(1+t), t \geq 0.$$ 
		Furthermore, hypotheses $(\phi_1) -(\phi_4)$ and \ref{paper4f1} - \ref{paper4f3} are satisfied. Furthermore, we observe that 
		$$t \mapsto \dfrac{(2 - q) \phi(t) + \phi'(t)t}{t^{p-2}} = \dfrac{(2 -q) ln(1 + t)}{t^{p-2}} + \dfrac{1}{t^{p-1}(1 + t)}, t > 0.$$
		It is not hard to see that the function
		$$t \mapsto \dfrac{(2 -q) ln(1 + t)}{t^{p-2}}, t > 0,$$
		is decreasing. Here we recall that $1<q<\ell=2<m=3<p<\ell^*$. Therefore, the hypothesis \ref{paper4f4} is also verified. Furthermore, we observe that 
		$$S(t)=\left(\frac{p}{2}-1\right)t^2\ln(1+t)-\frac{p}{4}t^2+\frac{p}{2}t-\frac{p}{2}\ln(1+t), t \geq 0.$$
		Hence,
		$$S'(t)=2\left(\frac{p}{2}-1\right)t\ln(1+t)+\left(\frac{p}{2}-1\right)\frac{t^2}{1+t}-\frac{p}{2}t+\frac{p}{2}-\frac{p}{2(1+t)}, t \geq 0,$$
		and
		$$S''(t)=\left(\frac{p}{2}-1\right)\left[2ln(1+t)+\frac{2t}{1+t}+\frac{2t+t^2}{(1+t)^2}\right]-\frac{p}{2} + \frac{p}{2(1+t)^2}, t \geq 0.$$
		Now, using the elementary inequality $\frac{t}{t+1}\leq \ln(1+t),~t\geq 0$, we deduce that 
		$$S''(t)\geq \frac{4(p-3)t+2(2p-5)t^2}{2(1+t)^2}\geq 0,~t\geq 0.$$
		Thus, $S$ is a convex function proving that hypothesis $(\phi_5)$ is also satisfied. Moreover,  the hypothesis \ref{paper4f5} is also verified for each $p>2\frac{3-q}{2-q}$. It is important to emphasize that for this kind of $N$-function the Orlicz-Sobolev space $W^{1,\Phi}_0(\Omega)$ does not coincide with any Sobolev space $W^{1,p}_0(\Omega)$ with $p \in (1, \infty)$. A similar assertion holds for the spaces $L_{\Phi}(\Omega)$ and $L^p(\Omega)$ with $p \in (1, \infty)$. For further results on Orlicz and Orlicz-Sobolev spaces, we refer the reader to \cite{Alberico, ed2}. 
	\end{Ex}

	\subsection{Outline}
	The remainder of this paper is organized as follows: In the forthcoming Section we consider a brief review of Orlicz and Orlicz Sobolev spaces. In Section 3 we consider the Nehari manifold method for our main problem. In Section 4 we prove some useful results for our main problem with $\lambda = \lambda^*$. In Section 5 we some result assuming that $\lambda > \lambda^*$. The Section 6 is devoted to the proof of our main results. 
	
	\subsection{Notation} Throughout the paper we will use the following notation: $C$, $\tilde{C}$, $C_{1}$, $C_{2}$,... denote positive constants (possibly different). The norm in $L^{p}(\mathbb{R}^N)$ and $L^{\infty}(\mathbb{R}^N)$, will be denoted respectively by $\|\cdot\|_{p}$ and $\|\cdot\|_{\infty}$. The norm in $W^{1,\Phi}_0(\Omega)$ is given by the Orlicz-Luxemburg norm denoted by $\| \cdot \|$, see Section 2 ahead.  Furthermore, $|\cdot|$ denotes the Lebesgue measure on $\mathbb{R}^{N}$.
	
	\section{Preliminaries for the Orlicz and Orlicz-Sobolev framework}\label{orlicz}
	
	In this section we shall give some powerful tools for the Orlicz and Orlicz-Sobolev spaces which will be used later. Firstly, we recall some basic concepts on Orlicz and Orlicz-Sobolev spaces. For a more complete discussion on this subject, we refer the readers to \cite{adams,Krasn, Gz1, gossez-Czech, rao}. Let $\Theta:\mathbb{R}\rightarrow[0,+\infty)$ be convex and continuous. It is important to say that $\Theta$ is a $N$-function if $\Theta$ satisfies the following conditions:
	\begin{itemize}
		\item[(i)] $\Theta$ is even;
		\item[(ii)] $\displaystyle\lim_{t\rightarrow 0}\frac{\Theta(t)}{t}=0$;
		\item[(iii)] $\displaystyle\lim_{t\rightarrow \infty}\frac{\Theta(t)}{t}=\infty$;
		\item[(iv)] $\Theta(t)>0$, for all $t>0$.	
	\end{itemize}
	Notice that by using assumptions $(\phi_1)$ and $(\phi_2)$ we conclude that $\Phi$, defined in \eqref{phi}, is a $N$-function. Henceforth, $\Theta$ and $\Psi$ denote $N$-functions. 
	
	Recall also that a $N$-function satisfies the $\Delta_{2}$-condition if there exists $K>0$ such that
	\[
	\Theta(2t)\leq K\Theta(t), \quad \mbox{for all} \hspace{0,2cm} t\geq0.
	\]
	We denote by $\tilde{\Theta}$ the complementary function of $\Theta$, which is given by the Legendre's transformation
	\[
	\tilde{\Theta}(s)=\max_{t\geq0}\{st-\Theta(t)\}, \quad \mbox{for all} \hspace{0,2cm} s\geq0.
	\]
	Let $\Omega\subset\mathbb{R}^{N}$ be an open subset and $\Theta:[0,+\infty)\rightarrow[0,+\infty)$ be fixed. The set
	\[
	\mathcal{L}_{\Theta}(\Omega):=\left\{u:\Omega\rightarrow\mathbb{R} \mbox{ measurable } : \int_{\Omega}\Theta(|u(x)|)dx<+\infty\right\},
	\]
	is the so-called \textit{Orlicz class}. Let us suppose that $\Theta$ is a Young function generated by $\theta$, that is
	\[
	\Theta(t)=\int_{0}^{t}s \theta(s)\,\mathrm{d}s.
	\]
	Let us define $\displaystyle\tilde{\theta}(t):=\sup_{s\theta(s)\leq t}s$, for $t\geq0$. The function $\tilde{\Theta}$ can be rewritten as follows
	\[
	\tilde{\Theta}(t)=\int_{0}^{t} s \tilde{\theta}(s)\,\mathrm{d}s.
	\]
	The function $\tilde{\Theta}$ is called the \textit{complementary function} to $\Theta$. The set
	\[
	L_{\Theta}(\Omega):=\left\{u:\Omega\rightarrow\mathbb{R}:\int_\Omega \Theta\left(\frac{|u(x)|}{\lambda}\right)<\infty, \mbox{ for some } \lambda>0 \right\},
	\]
	is called \textit{Orlicz space}. The usual norm on $L_{\Theta}(\Omega)$ is the \textit{Luxemburg norm}
	\[
	\|u\|_\Theta=\inf\left\{\lambda>0~|~\int_\Omega \Theta\left(\frac{|u(x)|}{\lambda}\right) \leq 1\right\}.
	\]
	We recall that the \textit{Orlicz-Sobolev space} $W^{1,\Theta}(\Omega)$ is defined by
	\[
	W^{1,\Theta}(\Omega):=\left\{u\in L_{\Theta}(\Omega):\exists f_{i}\in L_{\Theta}(\Omega), \int_{\Omega}u\frac{\partial \phi}{\partial x_{i}}=-\int_{\Omega}f_{i}\phi, \ \forall \phi\in C^{\infty}_{0}(\Omega), \ i=1,...,N\right\}.
	\]
	The Orlicz-Sobolev norm of $ W^{1, \Theta}(\Omega)$ is given by
	\[
	\displaystyle \|u\|_{1,\Theta}=\|u\|_\Theta+\sum_{i=1}^N\left\|\frac{\partial u}{\partial x_i}\right\|_\Theta.
	\]
	Since $\Theta$ satisfies the $\Delta_{2}$-condition, we define by $W_0^{1,\Theta}(\Omega)$  the closure of $C_0^{\infty}(\Omega)$ with respect to the Orlicz-Sobolev norm of $W^{1,\Theta}(\Omega)$. By the Poincar\'e inequality (see e.g.  \cite{gossez-Czech}), that is, the inequality
	\[
	\int_\Omega\Theta(u)\leq \int_\Omega\Theta(2d_{\Omega}|\nabla u|),
	\]
	where $d_{\Omega}=\mbox{diam}(\Omega)$, we can conclude that
	\[
	\|u\|_\Theta\leq 2d_{\Omega}\|\nabla u\|_\Theta, \quad \mbox{for all} \hspace{0,2cm} u\in W_0^{1,\Theta}(\Omega).
	\]
	As a consequence, we have that  $\|u\| :=\|\nabla u\|_\Theta$ defines a norm in $W_0^{1,\Theta}(\Omega)$ which is equivalent to $\|\cdot\|_{1,\Phi}$. The spaces $L_{\Theta}(\Omega)$, $W^{1,\Theta}(\Omega)$ and $W^{1,\Theta}_{0}(\Omega)$ are separable and reflexive when $\Theta$ and $\tilde{\Theta}$ satisfy the $\Delta_{2}$-condition. 
	
	Recall also that $\Psi$ dominates $\Theta$ near infinity, in short we write $\Theta<\Psi$, if there exist positive constants $t_{0}$ and $k$ such that 
	\[
	\Theta(t)\leq \Psi(kt), \quad \mbox{for all} \hspace{0,2cm} t\geq t_{0}.
	\]
	If $\Theta<\Psi$ and $\Psi<\Theta$, then we say that $\Theta$ and $\Psi$ are equivalent, and we denote by $\Theta\approx \Psi$.  Let $\Theta_*$ be the inverse of the function
	$$
	t\in(0,\infty)\mapsto\int_0^t\frac{\Theta^{-1}(s)}{s^{\frac{N+1}{N}}}ds
	$$
	which extends to $\mathbb{R}$ by  $\Theta_*(t)=\Theta_*(-t)$ for  $t\leq 0$. We say that $\Theta$ increases essentially more slowly than $\Psi$ near infinity, in short we write $\Theta<<\Psi$, if and only if for every positive constant $k$ one has
	\[
	\lim_{t\rightarrow \infty}\frac{\Theta(kt)}{\Psi(t)}=0.
	\]
	It is important to emphasize that if $\Theta<\Psi<<\Theta_*$, then the following embedding
	$$
	\displaystyle W_0^{1,\Theta}(\Omega) \hookrightarrow L_\Psi(\Omega),
	$$
	is compact. In particular, since $\Theta<<\Theta_*$ (cf. \cite[Lemma 4.14]{Gz1}), we have that $W_0^{1,\Theta}(\Omega)$ is compactly embedded into $L_\Theta(\Omega)$.
	Furthermore, we have that $W_0^{1,\Theta}(\Omega)$ is continuous embedded into $L_{\Theta_*}(\Omega)$. Finally, we recall the following proposition due to N. Fukagai et al. \cite{Fuk_1} which can be written in the following way:
	\begin{proposition}\label{Narukawa}
		Assume that $(\phi_1)-(\phi_3)$ hold and set
		$$
		\zeta_0(t)=\min\{t^\ell,t^m\}~~\mbox{and}~~ \zeta_1(t)=\max\{t^\ell,t^m\},~~ t\geq 0.
		$$
		Then $\Phi$ satisfies the following estimates:
		$$
		\zeta_0(t)\Phi(\rho)\leq\Phi(\rho t)\leq \zeta_1(t)\Phi(\rho), \quad \rho, t> 0,
		$$
		$$
		\zeta_0(\|u\|_{\Phi})\leq\int_\Omega\Phi(u)dx\leq \zeta_1(\|u\|_{\Phi}), \quad u\in L_{\Phi}(\Omega).
		$$
	\end{proposition}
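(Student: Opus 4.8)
The plan is to derive first the pointwise estimate $\zeta_0(t)\Phi(\rho)\le\Phi(\rho t)\le\zeta_1(t)\Phi(\rho)$ and then to deduce the modular (integral) estimates from it via the definition of the Luxemburg norm. The whole argument is powered by the two-sided bound contained in $(\phi_4)$. Writing $\varphi(t):=t\phi(t)$, so that $\Phi(t)=\int_0^{|t|}\varphi(s)\,\mathrm{d}s$ and $\Phi'(t)=\varphi(t)$ for $t>0$, one computes $\frac{t\varphi'(t)}{\varphi(t)}=1+\frac{t\phi'(t)}{\phi(t)}$, so that $(\phi_4)$ reads $\ell-1\le\frac{t\varphi'(t)}{\varphi(t)}\le m-1$ for all $t>0$.

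First I would upgrade this to $\ell\le\frac{t\Phi'(t)}{\Phi(t)}\le m$ on $(0,\infty)$. For the lower bound, set $F(t):=t\varphi(t)-\ell\Phi(t)$; then $F(0)=0$ and $F'(t)=t\varphi'(t)-(\ell-1)\varphi(t)\ge 0$ by the inequality above, hence $F\ge 0$ on $[0,\infty)$, that is $t\Phi'(t)\ge\ell\Phi(t)$. The upper bound follows in the same way from $G(t):=t\varphi(t)-m\Phi(t)$, for which $G(0)=0$ and $G'(t)\le 0$. Equivalently, $t\mapsto\Phi(t)/t^\ell$ is nondecreasing and $t\mapsto\Phi(t)/t^m$ is nonincreasing on $(0,\infty)$.

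Next, fix $\rho>0$ and apply this bound to the positive $C^1$ function $s\mapsto\Phi(\rho s)$: since $\frac{\mathrm{d}}{\mathrm{d}s}\log\Phi(\rho s)=\frac1s\cdot\frac{(\rho s)\Phi'(\rho s)}{\Phi(\rho s)}$ lies in $[\ell/s,\,m/s]$, integrating from $1$ to $t$ when $t\ge 1$ and from $t$ to $1$ when $0<t<1$ gives $t^\ell\Phi(\rho)\le\Phi(\rho t)\le t^m\Phi(\rho)$ in the first case and $t^m\Phi(\rho)\le\Phi(\rho t)\le t^\ell\Phi(\rho)$ in the second; since $\ell\le m$, both amount to $\zeta_0(t)\Phi(\rho)\le\Phi(\rho t)\le\zeta_1(t)\Phi(\rho)$ (the case $t=1$ is trivial, and the inequality extends trivially to $\rho=0$). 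For the modular estimates, take $u\in L_\Phi(\Omega)$ with $u\ne 0$ and put $\lambda:=\|u\|_\Phi$; since $\Phi$ is continuous and satisfies the $\Delta_{2}$-condition, $\int_\Omega\Phi(|u|/\lambda)\,\mathrm{d}x=1$. Applying the pointwise inequality with $t=\lambda$ and $\rho=|u(x)|/\lambda$ and integrating over $\Omega$ yields $\zeta_0(\lambda)\le\int_\Omega\Phi(|u|)\,\mathrm{d}x\le\zeta_1(\lambda)$, which is the claim (the case $u=0$ is immediate).

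I do not expect a genuine obstacle — this is the classical estimate of Fukagai, Narukawa and Shibata — but two points deserve care: the identity $\int_\Omega\Phi(|u|/\|u\|_\Phi)\,\mathrm{d}x=1$, which relies on continuity of $\Phi$ together with the $\Delta_{2}$-condition (so that the modular is finite for every scaling factor and the infimum in the Luxemburg norm is attained); and the observation that, although the statement quotes $(\phi_1)$–$(\phi_3)$, the exponents $\ell,m$ and the decisive monotonicity are provided by $(\phi_4)$, whereas $(\phi_3)$ is not used in this proposition (it enters only later, for the embedding into $L_{\Phi_*}(\Omega)$).
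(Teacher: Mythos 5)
Your argument is correct. Note, however, that the paper does not prove this proposition at all: it is simply recalled from Fukagai--Ito--Narukawa \cite{Fuk_1}, so there is no in-paper proof to compare against. Your reconstruction is the standard one from that reference: pass from $(\phi_4)$ to $\ell\le t\Phi'(t)/\Phi(t)\le m$ by the sign argument on $t\varphi(t)-\ell\Phi(t)$ and $t\varphi(t)-m\Phi(t)$ (this intermediate step is exactly Proposition~\ref{salvadora}, which the paper also states without proof), then integrate the logarithmic derivative of $s\mapsto\Phi(\rho s)$ to get the two-sided power bounds, and finally combine the pointwise estimate with $\int_\Omega\Phi(|u|/\|u\|_\Phi)\,dx=1$. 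Your two flagged points are both well taken: the equality of the modular at the Luxemburg norm does require the $\Delta_2$-condition (which itself follows from the pointwise estimate, via $\Phi(2t)\le 2^m\Phi(t)$), and the hypotheses as printed are slightly off, since $\ell$ and $m$ are only defined in $(\phi_4)$, so that assumption is indispensable even though the statement lists only $(\phi_1)$--$(\phi_3)$.
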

	As a consequence, by using Proposition \ref{Narukawa}, we can prove the following result:
	\begin{proposition}\label{emb-ell}
		Assume that $(\phi_1)-(\phi_4)$ hold. Then $L_\Phi(\Omega) \hookrightarrow L^\ell(\Omega)$ and $L_{\Phi_*}(\Omega) \hookrightarrow L^{\ell^*}(\Omega)$.
	\end{proposition}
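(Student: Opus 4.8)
The plan is to extract the two claimed embeddings directly from the pointwise estimates in Proposition \ref{Narukawa} together with assumption $(\phi_4)$, which (via the definition $\ell-2 = \inf_{t>0}\phi'(t)t/\phi(t)$) guarantees that the exponents $\ell, m$ satisfy $1 < \ell \le m$ and hence that the comparison functions $t^\ell$ and $t^m$ are genuine powers dominating/dominated by $\Phi$ near $0$ and near $\infty$. First I would establish the first embedding $L_\Phi(\Omega)\hookrightarrow L^\ell(\Omega)$ by showing that the $N$-function $t\mapsto t^\ell$ is dominated by $\Phi$ near infinity in the sense of the relation ``$<$'' recalled in the text. Indeed, from Proposition \ref{Narukawa} with $\rho = 1$ one gets $\Phi(t)\ge \zeta_0(t)\Phi(1) = \min\{t^\ell, t^m\}\Phi(1)$, so for $t\ge 1$ we have $\Phi(t)\ge \Phi(1)\, t^\ell$ (since $\ell\le m$ forces $\min\{t^\ell,t^m\}=t^\ell$ for $t\ge1$). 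Rescaling, this says $t^\ell \le \Phi(k t)$ for all $t\ge t_0$ with a suitable $k>0$ and $t_0\ge 1$; that is, $|t|^\ell < \Phi$ near infinity. Since $\Omega$ is bounded, domination near infinity is enough to conclude the continuous inclusion $L_\Phi(\Omega)\hookrightarrow L^\ell(\Omega)$ by the standard Orlicz embedding theorem for comparable $N$-functions (see \cite{adams, rao}).

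For the second embedding I would argue in the same spirit but at the level of the Sobolev conjugates. The point is to compare the function $\Phi_*$ — the inverse of $t\mapsto\int_0^t \Phi^{-1}(s)\,s^{-(N+1)/N}\,ds$, which is well defined and finite near $0$ and diverges near $\infty$ precisely by hypothesis $(\phi_3)$ — with $(|t|^\ell)_* = |t|^{\ell^*}$, the Sobolev conjugate of the power $N$-function $|t|^\ell$. Because $t^\ell\le \Phi(kt)$ near infinity, one has the reverse inequality for the inverses, $\Phi^{-1}(s) \le k^{-1} s^{1/\ell}$ for large $s$ (up to constants), and therefore $\int_0^t \Phi^{-1}(s)\,s^{-(N+1)/N}\,ds \le C\int_0^t s^{1/\ell - (N+1)/N}\,ds$, whose inverse is, after the elementary exponent bookkeeping, comparable to $t^{\ell^*}$ near infinity. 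Inverting this comparison yields $\Phi_*(t) \ge c\, t^{\ell^*}$ for $t$ large, i.e. $|t|^{\ell^*} < \Phi_*$ near infinity, and then $L_{\Phi_*}(\Omega)\hookrightarrow L^{\ell^*}(\Omega)$ follows from the same Orlicz embedding theorem applied to $\Phi_*$.

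The main obstacle I anticipate is the exponent arithmetic in the second part: one must verify carefully that the primitive $\int_0^t s^{1/\ell-(N+1)/N}\,ds$ is indeed finite (equivalently that $1/\ell - (N+1)/N > -1$, which reduces to $\ell < N$, consistent with the hypothesis $1<\ell<\ell^*$ implicitly requiring $\ell<N$) and that its functional inverse behaves like $t^{N\ell/(N-\ell)} = t^{\ell^*}$ up to multiplicative constants, so that the comparison ``$<$'' near infinity is genuinely obtained and not merely an asymptotic heuristic. A clean way to sidestep delicate estimates is to invoke the already-recorded facts that $\Phi$ is an $N$-function satisfying $(\phi_1)$–$(\phi_3)$ so that $\Phi_*$ is itself an $N$-function (this is the content of the construction preceding Proposition \ref{Narukawa}), and then to note that the power function $|t|^\ell$ likewise satisfies $(\phi_1)$–$(\phi_3)$ with conjugate $|t|^{\ell^*}$; monotonicity of the map $\Theta\mapsto\Theta_*$ with respect to the ordering ``$<$'' then transports the domination $|t|^\ell<\Phi$ to $|t|^{\ell^*}<\Phi_*$ without further computation. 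Finally, since every inclusion here is between Orlicz spaces over a bounded domain with $\Delta_2$-regular $N$-functions, continuity of the embeddings is automatic once domination near infinity is in hand, which completes the proof.
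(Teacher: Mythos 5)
Your proof is correct and follows essentially the route the paper intends: the lower bound $\Phi(t)\ge \Phi(1)\,t^\ell$ for $t\ge 1$, read off from Proposition \ref{Narukawa} with $\rho=1$, gives domination of $|t|^\ell$ by $\Phi$ near infinity, and boundedness of $\Omega$ then yields the continuous embedding $L_\Phi(\Omega)\hookrightarrow L^\ell(\Omega)$. For the second embedding your hand computation of the Sobolev-conjugate comparison is sound (the exponent arithmetic indeed gives $1/\ell-(N+1)/N+1=1/\ell-1/N=1/\ell^*$, with $\ell<N$ guaranteed by the standing hypotheses), but it can be shortcut by applying Proposition \ref{lema_naru_*} with $\rho=1$, which directly yields $\Phi_*(t)\ge \Phi_*(1)\,t^{\ell^*}$ for $t\ge 1$ and hence $|t|^{\ell^*}<\Phi_*$ near infinity.
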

	
	For the function $\Phi_*$ we obtain similar estimates given by the following result. 
	
	\begin{proposition}\label{lema_naru_*}
		Assume that  $\phi$ satisfies $(\phi_1)-(\phi_3)$.  Set
		$$
		\zeta_2(t)=\min\{t^{\ell^*},t^{m^*}\},~~ \zeta_3(t)=\max\{t^{\ell^*},t^{m^*}\},~~  t\geq 0
		$$
		where $1<\ell,m<N$ and $m^* = \frac{mN}{N-m}$, $\ell^* = \frac{\ell N}{N-\ell}$.  Then
		$$
		\ell^*\leq\frac{t\Phi'_*(t)}{\Phi_*(t)}\leq m^*,~t>0,
		$$
		$$
		\zeta_2(t)\Phi_*(\rho)\leq\Phi_*(\rho t)\leq \zeta_3(t)\Phi_*(\rho),~~ \rho, t> 0,
		$$
		$$
		\zeta_2(\|u\|_{\Phi_{*}})\leq\int_\Omega\Phi_{*}(u)dx\leq \zeta_3(\|u\|_{\Phi_*}),~ u\in L_{\Phi_*}(\Omega).
		$$
	\end{proposition}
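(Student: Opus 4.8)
\emph{Proof plan.} The plan is to transport the Fukagai--Narukawa estimates of Proposition \ref{Narukawa} through the two inversions built into the definition of $\Phi_*$: first $\Phi\mapsto\Phi^{-1}$, and then $\gamma\mapsto\gamma^{-1}=\Phi_*$, where $\gamma(t):=\int_0^t\Phi^{-1}(s)s^{-(N+1)/N}\,ds$. The only tool is the elementary remark that, for a $C^1$ strictly increasing bijection $f$ of $(0,\infty)$ with $f(0^+)=0$,
$$
a\le\frac{tf'(t)}{f(t)}\le b\ \ (t>0)\quad\Longleftrightarrow\quad\frac1b\le\frac{t(f^{-1})'(t)}{f^{-1}(t)}\le\frac1a\ \ (t>0),
$$
and that either of these is in turn equivalent to: $t^{-a}f(t)$ nondecreasing and $t^{-b}f(t)$ nonincreasing on $(0,\infty)$, hence to $\min\{t^a,t^b\}f(\rho)\le f(\rho t)\le\max\{t^a,t^b\}f(\rho)$ for all $\rho,t>0$.

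First I would observe that Proposition \ref{Narukawa} is equivalent to $\ell\le t\Phi'(t)/\Phi(t)\le m$ for $t>0$, so the remark gives $1/m\le t(\Phi^{-1})'(t)/\Phi^{-1}(t)\le 1/\ell$; equivalently $s\mapsto s^{-1/\ell}\Phi^{-1}(s)$ is nonincreasing and $s\mapsto s^{-1/m}\Phi^{-1}(s)$ is nondecreasing on $(0,\infty)$. By $(\phi_3)$ the map $\gamma$ is well defined, finite, strictly increasing and continuous on $[0,\infty)$ with $\gamma(0)=0$ and $\gamma(+\infty)=+\infty$, so $\Phi_*=\gamma^{-1}$ is a bijection of $(0,\infty)$, and $\gamma'(t)=\Phi^{-1}(t)t^{-(N+1)/N}$, whence $t\gamma'(t)=\Phi^{-1}(t)t^{-1/N}$.

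The heart of the argument is to bound $t\gamma'(t)/\gamma(t)$. For $0<s\le t$ the monotonicity just recorded gives $\Phi^{-1}(t)(s/t)^{1/\ell}\le\Phi^{-1}(s)\le\Phi^{-1}(t)(s/t)^{1/m}$; dividing by $s^{(N+1)/N}$ and integrating over $s\in(0,t)$, and using the identities $\tfrac1\ell-\tfrac1N=\tfrac1{\ell^*}$ and $\tfrac1m-\tfrac1N=\tfrac1{m^*}$ (so that the exponents $\tfrac1\ell-\tfrac{N+1}{N}=\tfrac1{\ell^*}-1$ and $\tfrac1m-\tfrac{N+1}{N}=\tfrac1{m^*}-1$ are both $>-1$, since $1<\ell,m<N$), one gets
$$
\ell^*\,\Phi^{-1}(t)\,t^{-1/N}\le\gamma(t)\le m^*\,\Phi^{-1}(t)\,t^{-1/N},
$$
that is $1/m^*\le t\gamma'(t)/\gamma(t)\le 1/\ell^*$. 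Applying the remark once more, now to $f=\gamma$, yields $\ell^*\le t\Phi_*'(t)/\Phi_*(t)\le m^*$, which is the first assertion; the equivalent multiplicative form is precisely the second assertion $\zeta_2(t)\Phi_*(\rho)\le\Phi_*(\rho t)\le\zeta_3(t)\Phi_*(\rho)$, and in particular $\Phi_*(2t)\le 2^{m^*}\Phi_*(t)$, so $\Phi_*$ satisfies the $\Delta_2$-condition. For the third assertion, given $u\in L_{\Phi_*}(\Omega)\setminus\{0\}$ I would set $\lambda=\|u\|_{\Phi_*}$; by $\Delta_2$ one has $\int_\Omega\Phi_*(u/\lambda)\,dx=1$, and applying the second assertion pointwise with $\rho=u(x)/\lambda$ and $t=\lambda$ and then integrating yields $\zeta_2(\|u\|_{\Phi_*})\le\int_\Omega\Phi_*(u)\,dx\le\zeta_3(\|u\|_{\Phi_*})$; the case $u\equiv 0$ is trivial.

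I do not expect a genuine obstacle here: the whole proof is the single scaling lemma about $tf'(t)/f(t)$ applied twice, mirroring the proof of Proposition \ref{Narukawa}. The points that need care are the convergence at $s=0^+$ of $\gamma$ and of $\int_0^t s^{1/\ell^*-1}\,ds$ and $\int_0^t s^{1/m^*-1}\,ds$ (which is exactly where $1<\ell,m<N$ and $(\phi_3)$ are used), the \emph{global} validity on $(0,\infty)$ of the power comparison for $\Phi^{-1}$, and keeping the direction of every inequality correct across the two inversions.
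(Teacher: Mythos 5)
Your proof is correct. The paper itself gives no argument for this proposition --- it is stated as a known companion to Proposition \ref{Narukawa} and is the standard lemma of Fukagai--Ito--Narukawa \cite{Fuk_1} --- and your double-inversion argument (pass from $\ell\le t\Phi'(t)/\Phi(t)\le m$ to $1/m\le t(\Phi^{-1})'(t)/\Phi^{-1}(t)\le1/\ell$, integrate the resulting power comparison for $\Phi^{-1}$ against $s^{-(N+1)/N}$ to get $1/m^*\le t\gamma'(t)/\gamma(t)\le1/\ell^*$, then invert once more) is essentially the proof found there. All the delicate points are handled: the exponents $1/\ell^*-1$ and $1/m^*-1$ exceed $-1$ precisely because $1<\ell,m<N$, the passage from the derivative bounds to the $\zeta_2,\zeta_3$ comparison and then to the modular estimate via $\|u\|_{\Phi_*}$ and the $\Delta_2$-condition is the standard equivalence, and the directions of the inequalities survive both inversions.
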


	Now, we shall consider the following useful result
	\begin{proposition}\label{salvadora}
		Suppose that assumptions $(\phi_1)-(\phi_2)$ and $(\phi_4)$ are satisfied. 
		Then we obtain the following statements: 
		\begin{equation*}
			\ell \Phi(t) \leq \phi(t) t^2 \leq m \Phi(t), t \in \mathbb{R}.
		\end{equation*}
	\end{proposition}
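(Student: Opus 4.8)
The plan is to reduce the double inequality to an elementary monotonicity argument for the primitive $\Phi$. First I would set $\psi(t):=t\phi(t)$ for $t>0$, so that $\psi=\Phi'$ on $(0,\infty)$ and $\Phi(t)=\int_0^{t}\psi(s)\,\mathrm{d}s$. Since $\phi$ is a positive $C^1$ function, $\psi$ is positive and $C^1$ on $(0,\infty)$, and a direct computation gives
\[
\frac{t\psi'(t)}{\psi(t)}=\frac{\phi(t)+t\phi'(t)}{\phi(t)}=1+\frac{t\phi'(t)}{\phi(t)},\qquad t>0 .
\]
Hence assumption $(\phi_4)$ translates exactly into
\[
(\ell-1)\,\psi(t)\le t\psi'(t)\le (m-1)\,\psi(t),\qquad t>0,
\]
which is the only analytic input I will use.

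Next I would introduce the auxiliary functions $F,G:[0,\infty)\to\mathbb{R}$ defined by $F(t):=t\psi(t)-\ell\,\Phi(t)$ and $G(t):=m\,\Phi(t)-t\psi(t)$. By $(\phi_1)$ we have $\psi(t)=t\phi(t)\to 0$ as $t\to 0^+$, so $t\psi(t)\to 0$ and $\Phi(t)\to 0$; therefore $F$ and $G$ extend continuously to $[0,\infty)$ with $F(0)=G(0)=0$, and both are $C^1$ on $(0,\infty)$. Differentiating and using $\psi=\Phi'$,
\[
F'(t)=t\psi'(t)-(\ell-1)\psi(t)\ge 0,\qquad G'(t)=(m-1)\psi(t)-t\psi'(t)\ge 0,
\]
the two inequalities being precisely the reformulation of $(\phi_4)$ above (here $\psi(t)>0$ is used). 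Consequently $F$ and $G$ are nondecreasing on $[0,\infty)$, so $F(t)\ge F(0)=0$ and $G(t)\ge G(0)=0$ for every $t\ge 0$, that is, $\ell\,\Phi(t)\le t\psi(t)\le m\,\Phi(t)$. Since $t\psi(t)=\phi(t)t^2$, this is the claimed chain of inequalities for $t\ge 0$, and the case $t\in\mathbb{R}$ follows because $\Phi$ is even (the middle term being understood as $\phi(|t|)t^{2}$).

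I do not anticipate any real obstacle: the only genuine step is choosing the right comparison functions $F$ and $G$, after which the sign of the derivatives is forced by $(\phi_4)$ alone, so that neither $(\phi_3)$ nor the growth conditions of $(\phi_1)$ at infinity are needed. The single point worth a line of care is the behaviour at the origin — one must invoke $(\phi_1)$ to ensure $\psi(0^{+})=0$, so that $F$ and $G$ are continuous on $[0,\infty)$ with value $0$ at $t=0$, which is exactly what licenses passing from ``$F'\ge 0$'' to ``$F\ge 0$'' by integrating from $0$.
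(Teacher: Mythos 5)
Your proof is correct, and it is essentially the argument the paper has in mind (the paper offers no written proof of Proposition \ref{salvadora}, only the remark in the proof of Proposition \ref{coercive} that the inequality ``can be proved using the hypothesis $(\phi_4)$ and the integration by parts''). Your monotone auxiliary functions $F$ and $G$ are just the differentiated form of that integration-by-parts computation — both hinge on rewriting $(\phi_4)$ as $(\ell-1)\,t\phi(t)\le t\,(t\phi(t))'\le (m-1)\,t\phi(t)$ and passing to the primitive $\Phi$ — and your care about the boundary value at $t=0$ via $(\phi_1)$ is exactly the right point to check.
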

	
	\section{The Nehari method for nonhomogeneous operators}
	In the present section we shall consider the Nehari method for the quasilinear operator driven by the $\Phi$-Laplacian operator. The main idea is to consider the fibering map $\gamma: [0, \infty) \to [0, \infty)$ given by $t \mapsto \gamma(t) = J_{\lambda}(tu)$ where $u \in W^{1,\Phi}_0(\Omega)$, see for instance \cite{Pohozaev, Pokhozhaev}. Furthermore, $\gamma$ is in $C^1$ class and $\gamma'(t) = 0$ if and only if $t u \in \mathcal{N}_{\lambda}$. In particular, $u \in \mathcal{N}_{\lambda}$ if and only if $\gamma'(1) = 0$. Under these conditions, we shall consider the function $\gamma$ in the following form 
	\begin{equation*}
		\gamma(t) = \int_{\Omega} \Phi( t |\nabla u|) dx - \dfrac{\lambda t^q}{q} \int_{\Omega} a(x) |u|^q dx - \dfrac{t^p}{p} \int_{\Omega} |u|^p dx, t \geq 0.
	\end{equation*}
	As a consequence, we also mention that 
	\begin{equation*}
		\gamma'(t) = \int_{\Omega} \phi( t |\nabla u|) t |\nabla u|^2 dx - \lambda t^{q-1} \int_{\Omega} a(x) |u|^q dx - t^{p-1} \int_{\Omega} |u|^p dx, t \geq 0
	\end{equation*}
	and
	\begin{equation*}
		\gamma''(t) = \int_{\Omega} \phi( t |\nabla u|)  |\nabla u|^2 + \phi'( t |\nabla u|)  t |\nabla u|^3 dx - \lambda (q-1) t^{q-2} \int_{\Omega} a(x) |u|^q dx - (p-1) t^{p-2} \int_{\Omega} |u|^p dx, t > 0.
	\end{equation*}
	Similarly, we also mention that 
	\begin{equation*}\label{ed1}
		J_{\lambda}'(u) u = \int_{\Omega} \phi(|\nabla u|) |\nabla u|^2 dx - \lambda \int_{\Omega} a(x) |u|^q dx -\int_{\Omega} |u|^p dx , u \in W^{1, \Phi}_0(\Omega).
	\end{equation*}
	and
	\begin{equation}\label{ed2}
		J_{\lambda}''(u) (u,u) = \int_{\Omega} 
		\phi(|\nabla u|) |\nabla u|^2 + \phi'(|\nabla u|)|\nabla u|^3 dx - \lambda(q-1) \int_{\Omega} a(x) |u|^q dx -(p -1)\int_{\Omega} |u|^p dx , u \in W^{1, \Phi}_0(\Omega).
	\end{equation}
	Under our assumptions we observe that 
	\begin{equation*}
		\gamma''(t) = \dfrac{J''_\lambda(tu)(tu,tu)}{t^2}, t > 0, u \in W^{1, \Phi}_0(\Omega).
	\end{equation*}
	It is worthwhile to mention that the functionals $u \mapsto \int_{\Omega} \Phi(|\nabla u|) dx$ and $u \mapsto \int_{\Omega} \phi(|\nabla u|) |\nabla u|^2 dx$ are not homogeneous. This is a difficulty in order to apply the Nehari method and the nonlinear Rayleigh quotient. This difficulty is overcome by proving that the function $t \mapsto R_n(t u)$ admits exactly one critical point. In fact, for a general $N$-function $\Phi$ the Nehari method can be applied using some fine estimates together with the fact that the norm in $W^{1,\Phi}_0(\Omega)$ is weakly lower semicontinuous, see \cite{Fig, ed2}. However, up to now, there is no result for a non-homogeneous operator driven by the $\Phi$-Laplacian taking into account the nonlinear Rayleigh quotient. More precisely, we consider the following result:
	
	\begin{proposition}\label{prop1}
		Suppose that assumptions $(\phi_1)-(\phi_4)$ and \ref{paper4f1}--\ref{paper4f3} are satisfied. Then the function $t \mapsto R_n(tu)$ has the following properties:
		\begin{itemize}
			\item[i)] There holds
			$$\lim_{t \to 0^+} \dfrac{R_n(tu)}{t^{m -q}}> 0, \lim_{t \to 0^+} \dfrac{\frac{d}{dt} R_n(tu)} {t^{m -q-1}} > 0.$$
			\item[ii)] There holds
			$$\lim_{t \to + \infty} \dfrac{R_n(tu)}{t^{p -q}} < 0, \lim_{t \to +\infty} \dfrac{\frac{d}{dt} R_n(tu)} {t^{p -q-1}} < 0.$$
		\end{itemize}
	\end{proposition}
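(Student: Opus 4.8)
The plan is to compute the numerator of $R_n(tu)$ explicitly and factor out the leading power of $t$ near $0$ and near $\infty$. Write
\[
N(t) := \int_\Omega \phi(t|\nabla u|)\, t^2 |\nabla u|^2 \, dx - t^p \|u\|_p^p, \qquad R_n(tu) = \frac{N(t)}{t^q \|u\|_{q,a}^q},
\]
so that $R_n(tu) = N(t) \big/ (t^q \|u\|_{q,a}^q)$ and $\frac{d}{dt} R_n(tu) = \big(t N'(t) - q N(t)\big)\big/(t^{q+1}\|u\|_{q,a}^q)$. Note the denominator $\|u\|_{q,a}^q$ is a fixed positive constant by \ref{paper4f3}, so all the work is in estimating $N(t)$ and $tN'(t)-qN(t)$. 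By \ref{paper4f3} we also have $\|u\|_{q,a}^q>0$ for every $u\neq 0$, so these quotients are well defined.

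First I would handle $t\to 0^+$. By Proposition \ref{Narukawa} applied with $\rho = |\nabla u|$ and the scaling variable $t$, together with Proposition \ref{salvadora} which gives $\ell\Phi(s)\le \phi(s)s^2 \le m\Phi(s)$, one gets the two-sided bound
\[
\ell\,\zeta_0(t)\!\int_\Omega \Phi(|\nabla u|)\,dx \;\le\; \int_\Omega \phi(t|\nabla u|)\,t^2|\nabla u|^2\,dx \;\le\; m\,\zeta_1(t)\!\int_\Omega \Phi(|\nabla u|)\,dx,
\]
where $\zeta_0(t)=\min\{t^\ell,t^m\}$, $\zeta_1(t)=\max\{t^\ell,t^m\}$. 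For $t\in(0,1]$ we have $\zeta_0(t)=t^m$ and $\zeta_1(t)=t^\ell$, so the integral $\int_\Omega \phi(t|\nabla u|)t^2|\nabla u|^2\,dx$ is squeezed between constant multiples of $t^m$ and $t^\ell$; dividing by $t^m$ (recall $\ell\le m$) and using $m<p$ so that the term $t^p\|u\|_p^p$ divided by $t^m$ tends to $0$, one sees $\liminf_{t\to0^+} R_n(tu)/t^{m-q} \ge \ell \int_\Omega\Phi(|\nabla u|)\,dx / \|u\|_{q,a}^q > 0$. For the derivative, differentiating inside the integral gives $N'(t) = \int_\Omega \big(2\phi(t|\nabla u|) t|\nabla u|^2 + \phi'(t|\nabla u|) t^2|\nabla u|^3\big)\,dx - p\,t^{p-1}\|u\|_p^p$; using $(\phi_4)$, i.e. $\ell-2 \le \phi'(s)s/\phi(s) \le m-2$, the integrand in $tN'(t)$ is bounded between $\ell$ and $m$ times $\phi(t|\nabla u|)t^2|\nabla u|^2$. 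This is the crucial closed form: it yields
\[
\ell\!\int_\Omega\! \phi(t|\nabla u|)t^2|\nabla u|^2 dx \;\le\; tN'(t)+ p\,t^p\|u\|_p^p \;\le\; m\!\int_\Omega\! \phi(t|\nabla u|)t^2|\nabla u|^2 dx ,
\]
from which $tN'(t)-qN(t)$ is again squeezed between constant multiples of $t^\ell$ and $t^m$ plus a $t^p$ correction, and dividing by $t^{m-q-1}\cdot t = t^{m-q}$ (matching the stated normalization $t^{m-q-1}$ after cancelling one power of $t$ from the $t^{q+1}$ denominator) and sending $t\to0^+$ gives a strictly positive limit. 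The cleanest route is: $tN'(t)-qN(t) \ge (\ell-q)\int_\Omega\phi(t|\nabla u|)t^2|\nabla u|^2\,dx - (p-q)t^p\|u\|_p^p$, and since $\ell>q$ by \ref{paper4f1} the first term dominates as $t\to0^+$ after normalizing by $t^m$.

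For $t\to+\infty$ the same bounds apply but now $\zeta_0(t)=t^\ell$, $\zeta_1(t)=t^m$, so $\int_\Omega \phi(t|\nabla u|)t^2|\nabla u|^2\,dx$ is $O(t^m)$, which is $o(t^p)$ since $m<p$ by \ref{paper4f1}. Hence $N(t)/t^p \to -\|u\|_p^p < 0$ and $R_n(tu)/t^{p-q} \to -\|u\|_p^p/\|u\|_{q,a}^q < 0$. Likewise $tN'(t) = \big(\text{something between } \ell \text{ and } m \text{ times } \int_\Omega\phi(t|\nabla u|)t^2|\nabla u|^2\,dx\big) - p\,t^p\|u\|_p^p$, so $tN'(t)-qN(t) \le (m-q)\int_\Omega\phi(t|\nabla u|)t^2|\nabla u|^2\,dx - (p-q)t^p\|u\|_p^p$, and dividing by $t^p$ the first term vanishes while the second tends to $-(p-q)\|u\|_p^p<0$; after cancelling powers this gives $\lim_{t\to\infty}\frac{d}{dt}R_n(tu)/t^{p-q-1} = -(p-q)\|u\|_p^p/\|u\|_{q,a}^q < 0$.

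I expect the main (though still modest) obstacle to be bookkeeping the exact powers of $t$ so the normalizations in the statement — $t^{m-q}$, $t^{m-q-1}$, $t^{p-q}$, $t^{p-q-1}$ — come out exactly right, rather than off by one power; the key is that $\frac{d}{dt}R_n(tu)$ carries a $t^{q+1}$ in the denominator while $R_n(tu)$ carries $t^q$, so the derivative's natural scale is one power lower, which is precisely reflected in the exponents $m-q-1$ and $p-q-1$. A secondary point requiring a sentence of care is justifying differentiation under the integral sign for $N(t)$, which follows from $\phi\in C^1$, hypothesis $(\phi_4)$ controlling the growth of $\phi'(s)s$, and dominated convergence on the bounded domain $\Omega$ together with $u\in W^{1,\Phi}_0(\Omega)$.
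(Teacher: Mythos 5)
Your proposal is correct and follows essentially the same route as the paper: both factor the numerator of $R_n(tu)$, use Proposition \ref{salvadora} together with Proposition \ref{Narukawa} to squeeze the non\-homogeneous gradient term between multiples of $t^{\ell}$ and $t^{m}$, invoke $(\phi_4)$ to control the $\phi'$ term in the derivative, and then compare against the $t^{p}$ term using $q<\ell\le m<p$. Your bookkeeping of the exponents and the identity $\frac{d}{dt}R_n(tu)=\bigl(tN'(t)-qN(t)\bigr)/\bigl(t^{q+1}\|u\|_{q,a}^q\bigr)$ match the paper's computation, so nothing further is needed.
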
 
	
	\begin{proof}
		Firstly, we observe that 
		\begin{equation*}
			R_n(tu) = \dfrac{ t^{2 - q}\int_{\Omega} \phi(t |\nabla u |) |\nabla u|^2 dx - t^{p-q} \| u\|_p^p}{\|u\|^q_{q,a}}, t > 0, u \in W^{1,\Phi}_0(\Omega).
		\end{equation*}
		Moreover, we mention that 
		\begin{equation*}\label{importante}
			\dfrac{d}{dt} R_n(tu) = \dfrac{ (2 - q)t^{1 - q}\int_{\Omega} \phi(t |\nabla u |) |\nabla u|^2 dx + t^{2-q}\int_{\Omega} \phi'(t |\nabla u |) |\nabla u|^3 dx - (p-q) t^{p-q-1} \| u\|_p^p}{\|u\|^q_{q,a}}, t > 0.
		\end{equation*}
		Hence, we obtain 
		\begin{equation*}
			\dfrac{R_n(tu)}{t^{m-q}} = \dfrac{ t^{2 - m}\int_{\Omega} \phi(t |\nabla u |) |\nabla u|^2 dx - t^{p-m} \| u\|_p^p}{\|u\|^q_{q,a}}.
		\end{equation*}
		Therefore, 
		\begin{equation}\label{e0}
			\lim_{t \to 0} \dfrac{R_n(tu)}{t^{m-q}} = \lim_{t \to 0} \left(\dfrac{ t^{2 - m}\int_{\Omega} \phi(t |\nabla u |) |\nabla u|^2 dx}{\|u\|^q_{q,a}} \right).
		\end{equation}
		The last assertion together with \eqref{e0} and Proposition \ref{salvadora} imply that 
		\begin{equation}\label{e1}
			\lim_{t \to 0} \dfrac{R_n(tu)}{t^{m-q}} \geq \lim_{t \to 0} \left(\dfrac{ t^{- m}\int_{\Omega} \Phi( t |\nabla u |)  dx}{\|u\|^q_{q,a}} \right).
		\end{equation}
		In light of \eqref{e1} and Proposition \ref{Narukawa} we see that 
		\begin{equation*}
			\lim_{t \to 0} \dfrac{R_n(tu)}{t^{m-q}} \geq \dfrac{ \int_{\Omega} \Phi(  |\nabla u |) dx}{\|u\|^q_{q,a}}  > 0.
		\end{equation*}
		Similarly, taking into account hypothesis $(\phi_4)$, we obtain that 
		\begin{equation*}
			\lim_{t \to 0} \dfrac{1} {t^{m-q-1}}\dfrac{d}{d t} R_n(tu) \geq  \dfrac{ \int_{\Omega} \Phi(|\nabla u |)  dx}{\|u\|^q_{q,a}} > 0.
		\end{equation*}
		As a consequence, the proof of item $i)$ follows. 
		
		Analogously, we prove that 
		\begin{equation*}
			\lim_{t \to \infty} \dfrac{R_n(tu)}{t^{p-q}} = - \dfrac{\| u\|_p^p}{\|u\|^q_{q,a}} + \lim_{t \to \infty} \left(\dfrac{ t^{2 - p}\int_{\Omega} \phi(t |\nabla u |) |\nabla u|^2 dx}{\|u\|^q_{q,a}} \right).
		\end{equation*}
		Now, by using Proposition \ref{salvadora} and Proposition \ref{Narukawa}, we obtain that 
		\begin{equation*}
			\lim_{t \to \infty} \dfrac{R_n(tu)}{t^{p-q}} = - \dfrac{\| u\|_p^p}{\|u\|^q_{q,a}} + \lim_{t \to \infty} \dfrac{  t^{m - p}\int_{\Omega} \Phi(|\nabla u |)  dx}{\|u\|^q_{q,a}} =  - \dfrac{\| u\|_p^p}{\|u\|^q_{q,a}} < 0.
		\end{equation*}
		Furthermore, by using the same ideas discussed just above, we obtain that 
		\begin{equation*}
			\lim_{t \to \infty} \dfrac{1} {t^{p-q-1}} \dfrac{d}{d t} R_n(tu)  \leq - (p -q) \dfrac{\| u\|_p^p}{\|u\|^q_{q,a}} < 0.
		\end{equation*}
		This ends the proof. 	
	\end{proof}
	
	\begin{proposition}\label{propimpor}
		Suppose that assumptions $(\phi_1)-(\phi_4)$ and \ref{paper4f1}--\ref{paper4f4} are satisfied. Then for each $u \in  W^{1,\Phi}_0(\Omega) \setminus \{0\}$ there exists an unique $t_n(u) > 0$ such that 
		\begin{equation}\label{e5}
			\dfrac{d}{dt} R_n(tu) = 0 \,\, \mbox{for} \,\, t = t_n(u). 
		\end{equation}
		Furthermore,  $t_n:W_0^{1,\Phi}(\Omega)\setminus\{0\}\to (0,\infty)$ is a $C^1$-functional.
	\end{proposition}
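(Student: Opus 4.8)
The plan is to reduce the proposition to a one-dimensional statement about a strictly monotone function, and then to deduce the regularity of $t_n$ from the implicit function theorem. Starting from the formula for $\frac{d}{dt}R_n(tu)$ displayed in the proof of Proposition~\ref{prop1} and multiplying by the factor $t^{\,q-p+1}\|u\|^q_{q,a}$ (positive for $t>0$), one uses the pointwise identity
\[
\frac{|\nabla u(x)|^{2}\big((2-q)\phi(t|\nabla u(x)|)+(t|\nabla u(x)|)\phi'(t|\nabla u(x)|)\big)}{t^{p-2}}
=|\nabla u(x)|^{p}\,h\big(t|\nabla u(x)|\big),\qquad
h(s):=\frac{(2-q)\phi(s)+\phi'(s)s}{s^{p-2}},
\]
(trivially extended by $0$ where $\nabla u(x)=0$) to see that, for every $t>0$, $\frac{d}{dt}R_n(tu)$ has the same sign as
\[
g_u(t):=\int_\Omega |\nabla u(x)|^{p}\,h\big(t|\nabla u(x)|\big)\,dx-(p-q)\|u\|_p^p .
\]
Here $h$ is exactly the function that hypothesis \ref{paper4f4} declares strictly decreasing on $(0,\infty)$. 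Thus \eqref{e5} is equivalent to the statement that $g_u$ has a unique zero on $(0,\infty)$, and the regularity assertion is equivalent to the $C^1$ dependence of that zero on $u$.

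\smallskip
\emph{Step 1: existence and uniqueness of the zero.} Since $u\neq 0$, the set $\{x:\nabla u(x)\neq 0\}$ has positive measure, and on it $t\mapsto h(t|\nabla u(x)|)$ is strictly decreasing by \ref{paper4f4}; hence $g_u$ is strictly decreasing on $(0,\infty)$. Continuity of $g_u$ is a routine dominated-convergence argument, the $L^1$-majorant on compact $t$-intervals being furnished by Propositions~\ref{salvadora} and \ref{Narukawa} together with the bound $0<(2-q)\phi(s)+\phi'(s)s\le(m-q)\phi(s)$ coming from $(\phi_4)$. For the endpoint behaviour I would invoke Proposition~\ref{prop1}: since $g_u$ and $\frac{d}{dt}R_n(tu)$ share the same sign, item~i) gives $g_u(t)>0$ for $t$ near $0^+$ and item~ii) gives $g_u(t)<0$ for $t$ large; concretely, $\phi(s)s^2\ge\ell\Phi(s)\ge c\,s^{m}$ near $0$ forces $h(s)\to+\infty$ (recall $m<p$), so $g_u(0^+)=+\infty$, while $\phi(s)s^2\le m\Phi(s)\le C\,s^{m}$ near $\infty$ forces $h(s)\to 0$, so by dominated convergence $g_u(+\infty)=-(p-q)\|u\|_p^p<0$. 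By the intermediate value theorem $g_u$ has a zero, which by strict monotonicity is unique; call it $t_n(u)$. This proves \eqref{e5}.

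\smallskip
\emph{Step 2: $C^1$-regularity of $t_n$.} I would apply the implicit function theorem to the map $G:\big(W_0^{1,\Phi}(\Omega)\setminus\{0\}\big)\times(0,\infty)\to\mathbb R$ obtained from $\frac{d}{dt}R_n(tu)$ after multiplication by a fixed positive power of $t$ chosen so that it is a globally well-behaved $C^1$ functional, at the point $(u,t_n(u))$, where $G(u,t_n(u))=0$. That $G$ is of class $C^1$ in $(u,t)$ follows from the differentiability on $W_0^{1,\Phi}(\Omega)$ of the integral functionals $v\mapsto\int_\Omega\phi(|\nabla v|)|\nabla v|^2\,dx$ and $v\mapsto\int_\Omega\phi'(|\nabla v|)|\nabla v|^3\,dx$, which is where the regularity of $\Phi$, the $\Delta_2$-condition and the growth controls $(\phi_1)$--$(\phi_4)$ enter. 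The decisive point is the non-degeneracy $\partial_tG(u,t_n(u))\neq 0$; here one exploits Step~1: $t\mapsto G(u,t)$ passes from positive to negative at $t_n(u)$, which already gives $\partial_tG(u,t_n(u))\le 0$, and the strict inequality is extracted by differentiating $g_u$ and using once more that $h$ is strictly decreasing, so that $g_u'(t_n(u))=\int_\Omega|\nabla u|^{p+1}h'(t_n(u)|\nabla u|)\,dx<0$. The implicit function theorem then produces, on a neighbourhood of $u$, a $C^1$ map whose value at each point is a zero of the corresponding $g$; by the uniqueness from Step~1 this map coincides with $t_n$, whence $t_n\in C^1$ on $W_0^{1,\Phi}(\Omega)\setminus\{0\}$.

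\smallskip
The step I expect to be the main obstacle is Step~2: verifying that the functional encoding the critical-point equation for $t\mapsto R_n(tu)$ is jointly $C^1$ in $(u,t)$ within the Orlicz--Sobolev framework, and upgrading the non-strict sign of its $t$-derivative at the unique zero — which the monotonicity of Step~1 hands over for free — to the strict inequality required by the implicit function theorem.
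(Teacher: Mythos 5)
Your argument is correct and follows essentially the same route as the paper's: the reduction of $\frac{d}{dt}R_n(tu)=0$ to an integral of the strictly decreasing function $h$ from hypothesis \ref{paper4f4}, the limits $h(0^+)=+\infty$ and $h(+\infty)=0$ coming from $(\phi_4)$, and the implicit function theorem applied to the critical-point equation $G(u,t)=0$ for the $C^1$ regularity of $t_n$. The only cosmetic differences are that you get existence from the intermediate value theorem applied to $g_u$ directly (the paper instead cites Proposition \ref{prop1} for existence and uses monotonicity only for uniqueness), and that your justification of the non-degeneracy $\partial_t G(u,t_n(u))<0$ via $h'\le 0$ is at the same level of detail as the paper's unproved assertion that $F_t(u_0,t_n(u_0))<0$ — in both cases the cleanest way to get the strict inequality is to note that $g_u$ is strictly decreasing and changes sign, so the derivative of the underlying strictly monotone integrand cannot vanish identically on the support of $\nabla u$.
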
 
	\begin{proof}
		According to Proposition \ref{prop1} we infer that there exists at least one number $t_n(u) > 0$ such that \eqref{e5} is satisfied. On the other hand, we observe that $$\dfrac{d} {dt} R_n(tu) = 0, t > 0$$ is equivalent to the following identity
		\begin{equation*}
			(p-q) \| u\|_p^p = \int_{\Omega} \dfrac{[(2 - q) \phi(t |\nabla u |) + \phi'(t |\nabla u|) |t \nabla u| ] |\nabla u|^2}{t^{p-2}} dx.
		\end{equation*}
		The last inequality can be rewritten as follows 
		\begin{equation}\label{salvou0}
			(p-q) \| u\|_p^p = \int_{\Omega} \dfrac{[(2 - q) \phi(t |\nabla u |) + \phi'(t |\nabla u|) |t \nabla u| ] |\nabla u|^p}{|t \nabla u|^{p-2}} dx.
		\end{equation}
		Now, by using hypothesis $(H_3)$, we obtain that the function $h:(0, \infty) \to \mathbb{R}$ given by 
		\begin{equation*}
			h(t) = \int_{\Omega} \dfrac{[(2 - q) \phi(t |\nabla u |) + \phi'(t |\nabla u|) |t \nabla u| ] }{|t \nabla u|^{p-2}}|\nabla u|^pdx
		\end{equation*}
		is strictly decreasing. Furthermore, by using assumption $(\phi_4)$, we have that $$\displaystyle\lim_{t\to 0}h(t)=\infty, \, \, \mbox{and} \,\, \displaystyle\lim_{t\to \infty}h(t)=0. $$ Hence, the identity \eqref{salvou0} admits at most one solution $t_n(u) > 0$ for each $u \in W^{1,\Phi}_0(\Omega) \setminus \{0\}$. Therefore, there exists a unique $t_n(u) > 0$ such that \eqref{e5} is verified.
		
		Now, we shall prove that $t_n:W_0^{1,\Phi}(\Omega)\setminus\{0\}\to (0,\infty)$ is a $C^1$-functional. Let us fix $u_0\in W_0^{1,\Phi}(\Omega)\setminus\{0\}$. At this stage, we know that $t_n(u_0)$ is well defined. Consider the auxiliary function $F:  W_0^{1,\Phi}(\Omega)\setminus\{0\}\times(0,\infty)\to \mathbb{R}$ given by $F(v,t)=R_n'(tv)tv$. Recall also that
		$$F_t(v,t)\big|_{(v,t)=(u_0,t_n(u_0))}=\frac{R_n''(t_n(u_0)u_0)(t_n(u_0)u_0)^2}{t_n(u_0)}<0.$$
		It follows from Implicit Function Theorem \cite{drabek} that there exists an open set $U\ni u_o$ and a $C^1$-functional $t:U\to (0,\infty)$ such that 
		$$F(v,t(v))=R_n'(t(v)v)t(v)v=0,~\forall v\in U.$$
		Furthermore, for each $v\in W_0^{1,\Phi}(\Omega)\setminus\{0\}$, there exists an unique $t_n(v)$ such that $R_n'(t_n(v)v)t_n(v)v=0$. Therefore, $t(v)=t_n(v)$ holds for each $v\in U$. The last assertion implies that $t_n\in C^1(U;(0,\infty))$. Since $u_0$ is an arbitrary function we conclude that $t_n\in C^1(W^{1,\Phi}_0(\Omega) \setminus \{0\};(0,\infty))$. This ends the proof. 
	\end{proof}

	\begin{remark}\label{rmk1}
		Let $t > 0$ and $u \in W^{1,\Phi}_0(\Omega) \setminus \{0\}$ be fixed. It is important to mention that $t u \in \mathcal{N}_{\lambda}$ if and only if $R_n(tu) = \lambda$. More specifically, by using  \eqref{b11}, we deduce that $R_n(t u)= \lambda$ if and only if $J_{\lambda}'(t u) t u = 0$. Furthermore, we mention that 
		$R_n(tu) > \lambda$ if and only if $J_{\lambda}'(tu) tu > 0$ and
		$R_n(tu) < \lambda$ if and only if $J_{\lambda}'(tu) tu < 0$.
	\end{remark}
	
	\begin{remark}\label{rmk11} Let $u\in W^{1,\Phi}_0(\Omega) \setminus\{0\}$ and $t > 0$ be fixed. Now, taking into account \eqref{b111}, we infer that ${R}_e(t u)=\lambda$ if and only if $J_{\lambda}(tu)=0$. Furthermore, we obtain that  
		$R_e(tu) > \lambda$ if and only if $J_{\lambda}(tu) > 0$ and $R_e(tu) < \lambda$ if and only if $J_{\lambda}(tu) < 0$.
	\end{remark}
	
	It is important to stress that the function $Q_n: (0, \infty) \to \mathbb{R}$ given by $Q_n(t) = R_n(tu)$ satisfies $Q_n'(t) > 0$ if and only if $t \in (0, t_n(u))$. Moreover, $Q_n'(t) < 0$ if and only if $t \in (t_n(u), \infty)$ and $Q_n'(t) = 0$ if and only if $t = t_n(u)$.
	At this stage, we shall consider the auxiliary functional $\Lambda_n :  W^{1,\Phi}_0(\Omega) \setminus \{0\} \to \mathbb{R}$ given by 
	\begin{equation*}
		\Lambda_n(u) = \sup_{t > 0} R_n(t u) = R_n(t_n(u) u).
	\end{equation*}
	Therefore, we can consider the following extreme 
	\begin{equation*}
		\lambda^* = \inf_{u \in  W^{1,\Phi}_0(\Omega) \setminus \{0\}} \Lambda_n(u).
	\end{equation*}
	As a product, we shall consider the following result:
	
	\begin{proposition}\label{marcos}
		Suppose that assumptions $(\phi_1)-(\phi_4)$ and \ref{paper4f1}--\ref{paper4f4} are satisfied. Then the functional $u \mapsto \Lambda_n(u)$ satisfies the following properties: 
		\begin{itemize}
			\item[i)] $\Lambda_n$ is zero homogeneous, that is, $\Lambda_n(s u) = \Lambda_n(u)$ for each $s > 0$ and $u \in W^{1,\Phi}_0(\Omega) \setminus \{0\}$;
			\item[ii)] The functional $\Lambda_n:W_0^{1,\Phi}(\Omega)\setminus\{0\}\to (0,\infty)$ is differentiable and weakly lower semicontinuous;
			\item[iii)] There exists $c > 0$ such that $\|t_n(u)u\|\geq c$ for each $u \in W^{1,\Phi}_0(\Omega) \setminus \{0\}$;
			\item[iv)] There exists $C=C(\ell,m,p,q,\Omega) > 0$ such that $\Lambda_n(u) \geq C$ for each $u \in W^{1,\Phi}_0(\Omega) \setminus \{0\}$;
			\item[v)] $\lambda^* > 0$ is attained, i.e., there exists $u^*\in W^{1,\Phi}_0(\Omega) \setminus \{0\}$ such that $\lambda^*=\Lambda_n(u^*)$;
			\item[vi)] $v^*:=t_n(u^*)u^*$ is a weak solution of the problem
			\begin{equation}\label{pia}
				\left\{
				\begin{array}{lr}
					-2\Delta_\Phi u -\dive(\phi'(|\nabla u|)|\nabla u|\nabla u)= q\lambda^* a(x) |u|^{q-2}u + p|u|^{p-2}u, & x\in\Omega,\\
					u = 0, & x \in \partial \Omega.
				\end{array}
				\right. \tag{$P_\lambda'$}
			\end{equation}
		\end{itemize}
	\end{proposition}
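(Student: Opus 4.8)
\emph{Setup and strategy.} Write $N(u)=\int_\Omega\phi(|\nabla u|)|\nabla u|^2\,dx-\|u\|_p^p$ and $D(u)=\|u\|^q_{q,a}>0$, so that $R_n=N/D$, and recall from Proposition~\ref{propimpor} that $t_n(u)$ is the unique maximiser of $t\mapsto R_n(tu)$ and is a $C^1$ function. I would prove the six items in the stated order, since iv)--vi) rely on i)--iii). Item i) is immediate: for $s>0$ the substitution $t\mapsto st$ gives $\Lambda_n(su)=\sup_{t>0}R_n(stu)=\sup_{\tau>0}R_n(\tau u)=\Lambda_n(u)$, equivalently $t_n(su)=t_n(u)/s$, a scaling reused below. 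For ii), the growth bounds from $(\phi_4)$ make $u\mapsto\int_\Omega\phi(|\nabla u|)|\nabla u|^2\,dx$ of class $C^1$; since $D>0$, $R_n\in C^1$, and composing with $t_n\in C^1$ gives $\Lambda_n=R_n(t_n(\cdot)\cdot)\in C^1$, with the envelope formula $\Lambda_n'(u)h=t_n(u)\,R_n'(t_n(u)u)h$ (the $\partial_t$ term drops because $t_n(u)$ is the maximiser). For weak lower semicontinuity, if $u_k\rightharpoonup u_0\ne0$ then fixing $t_0:=t_n(u_0)$ one has $\Lambda_n(u_k)\ge R_n(t_0u_k)$; the lower-order terms pass to the limit via the compact embeddings $W^{1,\Phi}_0(\Omega)\hookrightarrow L^p(\Omega),L^\ell(\Omega)$ (the latter through $W^{1,\Phi}_0(\Omega)\hookrightarrow\hookrightarrow L_\Phi(\Omega)\hookrightarrow L^\ell(\Omega)$, Proposition~\ref{emb-ell}) and H\"older with $a\in L^{\ell/(\ell-q)}(\Omega)$, giving $D(u_k)\to D(u_0)>0$, while $\liminf_k\int_\Omega\phi(t_0|\nabla u_k|)|t_0\nabla u_k|^2\,dx\ge\int_\Omega\phi(t_0|\nabla u_0|)|t_0\nabla u_0|^2\,dx$; taking $\liminf$ in $\Lambda_n(u_k)\ge R_n(t_0u_k)$ yields $\liminf_k\Lambda_n(u_k)\ge R_n(t_0u_0)=\Lambda_n(u_0)$.

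\emph{Uniform lower bounds (iii)--(iv).} Put $w=t_n(u)u$; since $t_n(u)$ is a critical point, $w$ satisfies $(p-q)\|w\|_p^p=\int_\Omega[(2-q)\phi(|\nabla w|)+\phi'(|\nabla w|)|\nabla w|]|\nabla w|^2\,dx$, and by $(\phi_4)$ the bracket lies between $(\ell-q)\phi(|\nabla w|)$ and $(m-q)\phi(|\nabla w|)$. The lower side, combined with Proposition~\ref{salvadora} ($\ell\Phi(t)\le\phi(t)t^2$), Proposition~\ref{Narukawa} and the continuous embedding $W^{1,\Phi}_0(\Omega)\hookrightarrow L^p(\Omega)$, gives $(\ell-q)\ell\,\zeta_0(\|w\|)\le(p-q)S_p^p\|w\|^p$; splitting according to whether $\|w\|\le1$ or $\|w\|>1$ forces $\|w\|\ge c$ for a constant $c>0$ independent of $u$, which is iii). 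For iv) the upper side gives $\|w\|_p^p\le\frac{m-q}{p-q}\int_\Omega\phi(|\nabla w|)|\nabla w|^2\,dx$, hence $N(w)\ge\frac{p-m}{p-q}\int_\Omega\phi(|\nabla w|)|\nabla w|^2\,dx\ge\frac{p-m}{p-q}\ell\,\zeta_0(\|w\|)\ge\frac{p-m}{p-q}\ell\,\zeta_0(c)>0$; since $D(w)\le\|a\|_{\ell/(\ell-q)}S_\ell^q\|w\|^q$, dividing and splitting again according to whether $\|w\|\le1$ or $\|w\|>1$ (using iii) for small norms) yields $\Lambda_n(u)=N(w)/D(w)\ge C>0$ with $C=C(\ell,m,p,q,\Omega)$.

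\emph{Attainment (v) and the limit equation (vi).} By iv), $\lambda^*=\inf\Lambda_n\ge C>0$. For a minimising sequence $(u_k)$ set $w_k=t_n(u_k)u_k$; by i), $\Lambda_n(w_k)=\Lambda_n(u_k)\to\lambda^*$ and $t_n(w_k)=1$, so $R_n(w_k)\to\lambda^*$. If $\|w_k\|\to\infty$ the estimate from iv) gives $R_n(w_k)\ge C'\|w_k\|^{\ell-q}\to\infty$, impossible; so $(w_k)$ is bounded, and by reflexivity and the compact embeddings $w_k\rightharpoonup w_0$ with $w_k\to w_0$ in $L^p(\Omega),L^\ell(\Omega)$ and a.e. If $w_0=0$ then $D(w_k)\to0$ while $N(w_k)\ge\frac{p-m}{p-q}\ell\,\zeta_0(c)>0$ by iii), forcing $R_n(w_k)\to+\infty$, again impossible; hence $w_0\ne0$. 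Weak lower semicontinuity (ii) then gives $\Lambda_n(w_0)\le\liminf_k\Lambda_n(w_k)=\lambda^*\le\Lambda_n(w_0)$, so $\lambda^*=\Lambda_n(w_0)$ is attained at $u^*:=w_0$ (replacing $u^*$ by $|u^*|$, which changes neither $\Lambda_n$ nor $t_n$, makes it nonnegative). For vi) set $v^*:=t_n(u^*)u^*$; then $\Lambda_n(v^*)=\lambda^*$ by i), $t_n(v^*)=1$, and the envelope formula gives $\Lambda_n'(v^*)=R_n'(v^*)$. Since $v^*$ minimises the $C^1$ functional $\Lambda_n$ over the open set $W^{1,\Phi}_0(\Omega)\setminus\{0\}$, $R_n'(v^*)=0$, i.e. $N'(v^*)=R_n(v^*)D'(v^*)=\lambda^*D'(v^*)$; writing out $N'$ and $D'$ this is precisely the weak formulation of $(P_\lambda')$ with $\lambda=\lambda^*$, and positivity of $v^*$ follows from the strong maximum principle for the monotone operator $u\mapsto-2\Delta_\Phi u-\dive(\phi'(|\nabla u|)|\nabla u|\nabla u)$, whose ellipticity weight $2\phi(s)+\phi'(s)s\ge\ell\phi(s)>0$ is positive by $(\phi_4)$.

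\emph{Main obstacle.} The delicate step is v): producing a nonzero weak limit for the normalised minimising sequence and passing to the limit, which forces us to use both iii) (to preclude vanishing) and the weak lower semicontinuity in ii). The genuinely nontrivial ingredient inside ii) is the weak lower semicontinuity of $u\mapsto\int_\Omega\phi(|\nabla u|)|\nabla u|^2\,dx$ on $W^{1,\Phi}_0(\Omega)$; if $t\mapsto t^2\phi(t)$ fails to be convex this has to be obtained otherwise, e.g. via the $(S_+)$-property of $-\Delta_\Phi$, which upgrades the minimising sequence to a strongly convergent one, after which continuity of $\Lambda_n$ alone closes the argument.
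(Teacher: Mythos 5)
Your proposal is correct and follows essentially the same route as the paper's proof: item i) by the scaling $t_n(su)=t_n(u)/s$, items iii)--iv) from the critical-point identity for $t_n(u)u$ combined with $(\phi_4)$, Proposition~\ref{salvadora}, Proposition~\ref{Narukawa} and the Sobolev embeddings, item v) by a normalized minimizing sequence whose weak limit is shown to be nonzero and then weak lower semicontinuity, and item vi) by differentiating the Rayleigh quotient at the minimizer. Your version of ii) (bounding $\Lambda_n(u_k)\ge R_n(t_n(u_0)u_k)$ directly) and of the non-vanishing step in v) are slightly cleaner than the paper's, and you are right to flag that both arguments ultimately rest on the weak lower semicontinuity of $u\mapsto\int_\Omega\phi(|\nabla u|)|\nabla u|^2\,dx$, which the paper asserts without proof.
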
 
	\begin{proof}
		Firstly, we shall prove the item $i)$. Let $s>0$ and $u\in W^{1,\Phi}_0(\Omega) \setminus \{0\}$ be fixed. Recall that $$R_n'\left(\frac{t_n(u)}{s}(su)\right)\left(\frac{t_n(u)}{s}(su)\right)=0.$$
		As a consequence, we obtain that 
		$t_n(su)=t_n(u)/s$. Therefore, 
		$$\Lambda_n(su)=R_n(t_n(su)su)=R_n\left(\frac{t_n(u)}{s}su\right)=R_n(t_n(u)u)=\Lambda_n(u).$$
		
		Now we shall prove the item $ii)$. Initially, by using Proposition \ref{propimpor}, we observe that $\Lambda_n\in C^1(W^{1,\Phi}_0(\Omega) \setminus \{0\};(0,\infty))$. Now, we consider a sequence $(u_k)\subset W_0^{1,\Phi}(\Omega)$ such that $u_k\rightharpoonup u\neq 0$ in $ W_0^{1,\Phi}(\Omega)$. 
		Since $v\mapsto R_n'(tv)(tv)$ is weakly lower semicontinuous we infer that
		$$0=R_n'(t_n(u)u)(t_n(u)u)\leq R_n'(t_n(u)u_k)(t_n(u)u_k), t > 0, ~k>>1.$$
		Consequently, $t_n(u)\leq t_n(u_k)$. Under these conditions, taking into account that $v\mapsto R_n(tv)$ is also weakly lower semicontinuous, we obtain
		$$\Lambda_n(u)=R_n(t_n(u)u)\leq \liminf_{k\to\infty} R_n(t_n(u)u_k)\leq \liminf_{k\to\infty} R_n(t_n(u_k)u_k)=\liminf_{k\to\infty}\Lambda_n(u_k), t > 0.$$
		This ends the proof of item $ii)$.

		Now, we shall prove the item $iii)$. As a first step, by using hypothesis $(\phi_4)$, we mention that
		\begin{eqnarray}
			0&=&\|t_n(u)u\|_q^q t_n(u)R_n'({t_n(u)}u)({t_n(u)}u)\nonumber\\
			&=&\int_\Omega \left[(2-q)\phi(t_n(u)|\nabla u|)+\phi'(|t_n(u)\nabla u|)t_n(u)|\nabla u|\right]|t_n(u)\nabla u|^2dx-(p-q)\|t_n(u)u\|_p^p\label{esti-t_n0}\\
			&\geq& (\ell-q)\int_\Omega \phi(t_n(u)|\nabla u|)|t_n(u)\nabla u|^2dx -(p-q)\|t_n(u)u\|_p^p.\label{esti-t_n}
		\end{eqnarray}
		Now, by using \eqref{esti-t_n} together with the embedding $W_0^{\Phi}(\Omega)\hookrightarrow L^p(\Omega)$ and Proposition \ref{Narukawa}, we infer that
		$$\|t_n(u)u\|^p\geq\ell\frac{\ell-q}{S_p(p-q)}\min\left\{\|t_n(u)u\|^\ell,\|t_n(u)u\|^m\right\},~\forall u\in W_0^{1,\Phi}(\Omega)\setminus\{0\}.$$
		Hence, the proof of item $iii)$ follows from the previous estimates together with hypothesis \ref{paper4f1}.
		
		Now, we shall prove $iv)$. It follows from \eqref{esti-t_n0} and $(\phi_4)$ that 
		\begin{eqnarray}
			\frac{m-q}{p-q}\int_\Omega \phi(t_n(u)|\nabla u|)|t_n(u)\nabla u|^2dx \geq \|t_n(u)u\|_p^p.\nonumber
		\end{eqnarray}
		Furthermore, by using Proposition \ref{Narukawa},  we deduce
		\begin{eqnarray}\label{est-ln}
			\Lambda_n(u)=R_n(t_n(u)u)&=& \frac{\displaystyle\int_{\Omega} \phi(|t_n(u)\nabla u|) |t_n(u)\nabla u|^2 dx- \|t_n(u)u\|_p^p}{\|t_n(u)u\|^q_{q,a}}\nonumber\\
			&\geq & \frac{\displaystyle\frac{p-m}{p-q}\int_{\Omega} \phi(|t_n(u)\nabla u|) |t_n(u)\nabla u|^2 dx}{\|t_n(u)u\|^q_{q,a}}\nonumber\\
			&\geq &	\ell\frac{p-m}{p-q}\frac{\min\left\{\|t_n(u)u\|^\ell,\|t_n(u)u\|^m\right\}}{\|t_n(u)u\|^q_{q,a}}
		\end{eqnarray}
		Now, by using \eqref{est-ln}, H\"older's inequality and the embedding $W_0^{1,\Phi}(\Omega)\hookrightarrow L^{\ell}(\Omega)$ and $ii)$, we infer that
		\begin{eqnarray}\label{lambda*}
			\Lambda_n(u)\geq \frac{\ell (p-m)}{S_\ell^q(p-q)\|a\|_{\frac{\ell}{\ell-q}}}\min\left\{\|t_n(u)u\|^{\ell-q},\|t_n(u)u\|^{m-q}\right\}\geq \frac{\ell (p-m)}{S_\ell^q(p-q)\|a\|_{\frac{\ell}{\ell-q}}}\min\left\{c^{\ell-q},c^{m-q}\right\}=:C.
		\end{eqnarray}
		At this stage, we consider a minimizer sequence $(u_k)$ for $\lambda^*$. Notice that $\Lambda_n$ is 0-homogeneous. Hence, we assume that $t_n(u_k)=1$. It follows from \eqref{lambda*} that 
		$$ \lambda^*+1\geq\lambda_n(u_k)\geq \frac{\ell (p-m)}{S_\ell^q(p-q)\|a\|_{\frac{\ell}{\ell-q}}}\min\left\{\|u_k\|^{\ell-q},\|u_k\|^{m-q}\right\},~k>>1.$$
		Therefore, the sequence $(u_k)$ is bounded in $W^{1,\Phi}_0(\Omega)$. Up to a subsequence, there exist $u \in W^{1,\Phi}_0(\Omega)$ such that $u_k\rightharpoonup u$. Now, we claim that $u\neq 0$ is satisfied. The proof of this claim follows arguing by contradiction. Let us assume that $u\equiv 0$. According to \eqref{esti-t_n}, $(\phi_4)$ and Proposition \ref{Narukawa} we obtain
		$$(p-q)\|u_k\|_p^p\geq(\ell-q)\int_\Omega \phi(|\nabla u_k|)|\nabla u_k|^2dx\geq \ell\int_\Omega \Phi(|\nabla u_k|)dx\geq \ell\min\left\{\|u_k\|^{\ell},\|u_k\|^{m}\right\}.$$
		The last assertion implies that $u_k\to 0$ in $W_0^{1,\Phi}(\Omega)$ which does not make sense due to item $ii)$. Hence, $u\not\equiv 0$ holds. Furthermore, by using the fact that $\Lambda_n$  is weak lower semicontinuous, we mention that
		$$\lambda^*\leq \Lambda_n(u)\leq \liminf_{k\to\infty}\Lambda_n(u_k)=\lambda^*.$$
		This ends the proof of item $v)$.
		
		Now, we shall prove the item $vi)$. Firstly, we consider the auxiliary functions $F, G :  W^{1,\Phi}_0(\Omega) \to \mathbb{R}$ given by
		$$F(u):= \int_{\Omega} \phi(|\nabla v^*|) |\nabla v^*|^2 dx- \|v^*\|_p^p\qquad\mbox{and}\qquad G(u):=\|v^*\|^q_{q,a}.$$
		Recall that $v^*$ is a critical point of $\Lambda_n$ and $\lambda^*=\frac{F(v^*)}{G(v^*)}$. Therefore,
		\begin{eqnarray}
			0&=& \frac{1}{G(v^*)}\left[F'(v^*)\varphi-\frac{F(v^*)}{G(v^*)}G'(v^*)\varphi\right]=\frac{1}{G(v^*)}\left[F'(v^*)\varphi-\lambda^*G'(v^*)\varphi\right]\nonumber\\
			&=& \frac{1}{G(v^*)} \left\{\int_{\Omega} \left[2\phi(|\nabla v^*|)+\phi'(|\nabla v^*|)|\nabla v^*|\right]\nabla v^*\nabla \varphi dx  -p\int_{\Omega} |v^*|^{p-2}v^*\varphi dx - q\lambda^* \int_{\Omega} a(x) |v^*|^{q-2}v^*\varphi dx\right\} \nonumber
		\end{eqnarray}
		holds for each $\varphi \in W^{1, \Phi}_0(\Omega)$.
		This ends the proof.	
	\end{proof}

	\begin{remark}
		Here we observe that the operator in Problem \eqref{pia} can be written as follows
		$$\Delta_{\Phi_1}u=2\Delta_\Phi u+\dive(\phi'(|\nabla u|)|\nabla u|\nabla u),~u\in W_0^{1,\Phi}(\Omega)$$
		where $\Phi_1(t)=\phi(t)t^2$. In fact, we mention that $\frac{\Phi_1'(t)}{t}=\phi'(t)t+2\phi(t), t > 0$.
	\end{remark}	
	
	Now, using the functional $R_e$ instead of $R_n$, we can consider the following auxiliary result 
	\begin{proposition}\label{ele}
		Suppose that assumptions $(\phi_1)-(\phi_4)$ and \ref{paper4f1}--\ref{paper4f3} are satisfied. Then the function $t \mapsto R_e(tu)$ has the following properties:
		\begin{itemize}
			\item[i)] There holds
			$$\lim_{t \to 0^+} \dfrac{R_e(tu)}{t^{m -q}}> 0, \lim_{t \to 0^+} \dfrac{\frac{d}{dt} R_e(tu)} {t^{m -q-1}} > 0.$$
			\item[ii)] There holds
			$$\lim_{t \to + \infty} \dfrac{R_e(tu)}{t^{p -q}} < 0, \lim_{t \to +\infty} \dfrac{\frac{d}{dt} R_e(tu)} {t^{p -q-1}} < 0.$$
		\end{itemize}
	\end{proposition}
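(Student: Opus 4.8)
\textbf{Proof strategy for Proposition \ref{ele}.}
The plan is to run the same argument as in the proof of Proposition \ref{prop1}, the only structural change being that the numerator $\int_{\Omega}\phi(|\nabla u|)|\nabla u|^2\,dx-\|u\|_p^p$ of $R_n$ is replaced by $\int_{\Omega}\Phi(|\nabla u|)\,dx-\frac{1}{p}\|u\|_p^p$ and the denominator $\|u\|^q_{q,a}$ by $\frac{1}{q}\|u\|^q_{q,a}$. First I would record the rescaling
$$R_e(tu)=\frac{q}{\|u\|^q_{q,a}}\left(t^{-q}\int_{\Omega}\Phi(t|\nabla u|)\,dx-\frac{t^{p-q}}{p}\|u\|_p^p\right),\quad t>0,$$
and, differentiating under the integral sign and using $\Phi'(s)=s\phi(s)$, so that $\frac{d}{dt}\Phi(t|\nabla u|)=\phi(t|\nabla u|)\,t|\nabla u|^2$,
$$\frac{d}{dt}R_e(tu)=\frac{q}{\|u\|^q_{q,a}}\left(-q\,t^{-q-1}\int_{\Omega}\Phi(t|\nabla u|)\,dx+t^{1-q}\int_{\Omega}\phi(t|\nabla u|)|\nabla u|^2\,dx-\frac{p-q}{p}\,t^{p-q-1}\|u\|_p^p\right).$$
Note that, unlike for $R_n$, no term containing $\phi'$ appears, so Proposition \ref{salvadora} (rather than hypothesis $(\phi_4)$ used directly) together with Proposition \ref{Narukawa} are the only pointwise tools needed.

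For item i) I would divide $R_e(tu)$ by $t^{m-q}$ and $\frac{d}{dt}R_e(tu)$ by $t^{m-q-1}$. Since $p>m$ by \ref{paper4f1}, each term carrying $\|u\|_p^p$ acquires the positive power $t^{p-m}$ and hence vanishes as $t\to 0^+$. In the derivative I would rewrite $t^{2-m}\int_{\Omega}\phi(t|\nabla u|)|\nabla u|^2\,dx=t^{-m}\int_{\Omega}\phi(t|\nabla u|)|t\nabla u|^2\,dx$ and use $\phi(s)s^2\ge\ell\Phi(s)$ (Proposition \ref{salvadora}) to combine the two $\Phi$-terms into $(\ell-q)\,t^{-m}\int_{\Omega}\Phi(t|\nabla u|)\,dx$, with $\ell-q>0$; the analogous, simpler computation for $R_e(tu)/t^{m-q}$ leaves just $t^{-m}\int_{\Omega}\Phi(t|\nabla u|)\,dx$. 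Finally, for $0<t\le 1$ Proposition \ref{Narukawa} gives $\zeta_0(t)=t^m$, so $t^{-m}\int_{\Omega}\Phi(t|\nabla u|)\,dx\ge\int_{\Omega}\Phi(|\nabla u|)\,dx>0$; letting $t\to 0^+$ then shows that both expressions in i) are bounded below by a positive multiple of $\frac{1}{\|u\|^q_{q,a}}\int_{\Omega}\Phi(|\nabla u|)\,dx$. As in Proposition \ref{prop1}, one really obtains a positive lower bound for the corresponding $\liminf$, which is what is used afterwards.

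For item ii) I would instead divide by $t^{p-q}$ and by $t^{p-q-1}$. Now the $\|u\|_p^p$-terms are independent of $t$ and produce the limits $-\frac{q}{p}\cdot\frac{\|u\|_p^p}{\|u\|^q_{q,a}}$ and $-\frac{q(p-q)}{p}\cdot\frac{\|u\|_p^p}{\|u\|^q_{q,a}}$, both strictly negative since $u\neq 0$ forces $\|u\|_p>0$. For the remaining terms I would use the upper bound $\phi(s)s^2\le m\Phi(s)$ (Proposition \ref{salvadora}) and $\zeta_1(t)=t^m$ for $t\ge 1$ (Proposition \ref{Narukawa}) to obtain $0\le t^{-p}\int_{\Omega}\Phi(t|\nabla u|)\,dx\le t^{m-p}\int_{\Omega}\Phi(|\nabla u|)\,dx\to 0$ and $0\le t^{-p}\int_{\Omega}\phi(t|\nabla u|)|t\nabla u|^2\,dx\le m\,t^{m-p}\int_{\Omega}\Phi(|\nabla u|)\,dx\to 0$ as $t\to\infty$, again because $p>m$. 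Hence every $\Phi$-contribution disappears in the limit and the two limits in ii) are exactly the negative constants above.

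The computation is essentially routine once the rescaled quotient and its derivative are written down, and I do not expect a genuine obstacle. The points that require care are applying the two structural inequalities in the correct order --- first Proposition \ref{salvadora} to pass between $\phi(\cdot)|\nabla u|^2$ and $\Phi(\cdot)$, then Proposition \ref{Narukawa} on the range of $t$ ($t\le 1$ for i), $t\ge 1$ for ii)) on which $\zeta_0$, respectively $\zeta_1$, equals $t^m$ --- and checking that $\ell-q>0$ and $p-m>0$ are precisely the sign conditions that make the behaviour near $0$ positive and the behaviour near $\infty$ negative.
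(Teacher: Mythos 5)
Your proposal is correct and follows essentially the same route as the paper, which for Proposition \ref{ele} simply invokes the argument of Proposition \ref{prop1} and omits the details; you have supplied exactly those details (rescaling the quotient, killing the $\|u\|_p^p$-terms via $p>m$ near $0$ and the $\Phi$-terms via $p>m$ at infinity, and using Propositions \ref{salvadora} and \ref{Narukawa} with $\ell-q>0$). Your remark that the conclusion in item i) is really a positive lower bound on the $\liminf$ is a fair observation that applies equally to the paper's own statement of Proposition \ref{prop1}.
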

	\begin{proof}
		The proof of this result follows using the same ideas discussed in the proof Proposition \ref{propimpor}. We omit the details. 
	\end{proof}
	
	\begin{proposition}\label{puloug}
		Suppose that assumptions $(\phi_1)-(\phi_4)$ and \ref{paper4f4} are satisfied. Then the function $$t \mapsto \dfrac{\phi(t)t^2 - q \Phi(t)}{t^p}, t > 0,$$ 
		is strictly decreasing. 
	\end{proposition}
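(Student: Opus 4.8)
The plan is to reduce the monotonicity of $g(t):=\dfrac{\phi(t)t^2-q\Phi(t)}{t^p}$ to hypothesis \ref{paper4f4} by an integral averaging argument. First I would set $N(t):=\phi(t)t^2-q\Phi(t)$, so that $g=N/t^p$ and, by the quotient rule, $g'(t)=\dfrac{N'(t)\,t-p\,N(t)}{t^{p+1}}$; hence it suffices to show $N'(t)\,t-p\,N(t)<0$ for every $t>0$. Using $\Phi'(t)=t\phi(t)$, which is immediate from \eqref{phi}, one computes $N'(t)=\phi'(t)t^2+(2-q)\phi(t)t=t\big[(2-q)\phi(t)+\phi'(t)t\big]$, i.e. $N'(t)=t^{p-1}h(t)$, where $h(t):=\dfrac{(2-q)\phi(t)+\phi'(t)t}{t^{p-2}}$ is exactly the function appearing in \ref{paper4f4}. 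Since $\phi\in C^1$, $h$ is continuous on $(0,\infty)$, and by \ref{paper4f4} it is strictly decreasing.

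Next I would record that $N(0)=0$ and that $N$ is recovered from $N'$ by integration. Indeed $\Phi(0)=0$ and $\phi(t)t^2\to0$ as $t\to0^+$ by $(\phi_1)$; moreover $s^{p-1}h(s)=(2-q)\phi(s)s+\phi'(s)s^2$ and, by $(\phi_4)$, $|\phi'(s)s^2|\le(m-2)\phi(s)s\to0$ as $s\to0^+$, so $s\mapsto s^{p-1}h(s)$ is continuous and bounded near the origin, hence integrable there. Therefore the fundamental theorem of calculus yields $N(t)=\displaystyle\int_0^t s^{p-1}h(s)\,ds$ for all $t>0$.

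Combining these facts, $N'(t)\,t-p\,N(t)=t^p h(t)-p\displaystyle\int_0^t s^{p-1}h(s)\,ds$, and since $\int_0^t s^{p-1}\,ds=t^p/p$ we may write $t^p h(t)=p\int_0^t s^{p-1}h(t)\,ds$, so that
\[
N'(t)\,t-p\,N(t)=-p\int_0^t s^{p-1}\big(h(s)-h(t)\big)\,ds .
\]
For every $s\in(0,t)$ one has $h(s)>h(t)$ by the strict monotonicity in \ref{paper4f4} and $s^{p-1}>0$, so the integrand is strictly positive on $(0,t)$; hence the integral is strictly positive and $N'(t)\,t-p\,N(t)<0$. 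Consequently $g'(t)<0$ for all $t>0$, i.e. $g$ is strictly decreasing, which is the assertion.

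As for difficulty, there is essentially no obstacle: the only mild points are the integrability of $s^{p-1}h(s)$ near $0$ and the identity $N(0)=0$, both routine consequences of $(\phi_1)$ and $(\phi_4)$, after which the conclusion is just the elementary inequality that the average of a strictly decreasing function over $(0,t)$ (weighted by $s^{p-1}$) strictly exceeds its value at $t$. An equivalent route would be to put $G(t):=N'(t)t-pN(t)$, observe $G(0)=0$, and compute $G'(t)=t^p h'(t)\le0$; but this needs $\phi\in C^2$, so I would keep the integral version, which uses only $\phi\in C^1$.
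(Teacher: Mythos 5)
Your proof is correct and rests on the same key facts as the paper's: $\frac{d}{dt}\bigl[\phi(t)t^2-q\Phi(t)\bigr]=t\bigl[(2-q)\phi(t)+\phi'(t)t\bigr]=t^{p-1}h(t)$ with $h$ strictly decreasing by \ref{paper4f4}, so that integrating this quantity from $0$ recovers the numerator. The paper arrives at $\bigl[\phi(t)t^2-q\Phi(t)\bigr]s^{p}<\bigl[\phi(s)s^2-q\Phi(s)\bigr]t^{p}$ for $t>s$ by integrating the pointwise inequality $h(\xi_1)<h(\xi_2)$ twice (in $\xi_1$ over $[0,t]$ and in $\xi_2$ over $[0,s]$), whereas you differentiate the quotient and compare $h(t)$ with its $s^{p-1}$-weighted average over $(0,t)$ --- an equivalent, and if anything cleaner, execution of the same idea.
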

	\begin{proof}
		Let $\xi_1, \xi_2 \in (0, \infty), \xi_1 > \xi_2$ be fixed. According to hypothesis \ref{paper4f4} it follows that 
		\begin{equation*}
			\dfrac{(2-q)\phi(\xi_1) + \phi'(\xi_1)\xi_1}{\xi_1^{p-2}} < \dfrac{(2-q)\phi(\xi_2) + \phi'(\xi_2)\xi_2}{\xi_2^{p-2}}.
		\end{equation*}
		The last estimate implies that 
		\begin{equation*}
			[(2-q)\phi(\xi_1)\xi_1 + \phi'(\xi_1)\xi_1^2] \xi_2^{p-1} < [(2-q)\phi(\xi_2)\xi_2 + \phi'(\xi_2)\xi_2^2] \xi_1^{p-1}.
		\end{equation*}
		Integrating the last expression in the variable $\xi_1$ over the interval $[0,t]$ and using the integration by parts we infer that
		\begin{equation*}
			[-q\Phi(t) + \phi(t)t^2] \xi_2^{p-1} < [(2-q)\phi(\xi_2)\xi_2 + \phi'(\xi_2)\xi_2^2] (t^{p}/p).
		\end{equation*}
		Now, integrating over the interval $[0,s]$ in the variable $\xi_2$, we also obtain that 
		\begin{equation*}
			[-q\Phi(t) + \phi(t)t^2] (s^{p}/p) < [-q\Phi(s) + \phi(s)s^2] (t^{p}/p), t \geq s \geq 0.
		\end{equation*}
		The desired follows taking $t > s > 0$. This ends the proof.
	\end{proof}
	
	\begin{proposition}\label{propimpor2}
		Suppose that assumptions $(\phi_1)-(\phi_4)$ and \ref{paper4f1}--\ref{paper4f4} are satisfied. Then for each $u \in  W^{1,\Phi}_0(\Omega) \setminus \{0\}$ there exists an unique $t_e(u) > 0$ such that 
		\begin{equation*}\label{e56}
			\dfrac{d}{dt} R_e(tu) = 0 \,\, \mbox{for} \,\, t = t_e(u).
		\end{equation*}
		Furthermore,  $t_e:W_0^{1,\Phi}(\Omega)\setminus\{0\}\to (0,\infty)$ is a $C^1$-functional.
	\end{proposition}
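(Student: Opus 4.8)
The plan is to follow, almost verbatim, the three–part scheme used for $R_n$ in Propositions~\ref{prop1}--\ref{propimpor}, with Proposition~\ref{puloug} playing the role that hypothesis~\ref{paper4f4} played there. For \emph{existence} of $t_e(u)$ I would simply invoke Proposition~\ref{ele}: it gives $\lim_{t\to 0^+} t^{-(m-q-1)}\tfrac{d}{dt}R_e(tu)>0$ and $\lim_{t\to +\infty} t^{-(p-q-1)}\tfrac{d}{dt}R_e(tu)<0$, and since $t\mapsto\tfrac{d}{dt}R_e(tu)$ is continuous on $(0,\infty)$ (because $\phi\in C^1$ and $u$ is fixed), the intermediate value theorem produces at least one $t_e(u)>0$ with $\tfrac{d}{dt}R_e(tu)\big|_{t=t_e(u)}=0$.

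For \emph{uniqueness} I would make the critical point equation explicit. Writing $N(t):=\int_\Omega\Phi(t|\nabla u|)\,dx-\tfrac{t^p}{p}\|u\|_p^p$, one has $R_e(tu)=qN(t)/(t^q\|u\|^q_{q,a})$, so $\tfrac{d}{dt}R_e(tu)=0$ is equivalent to $t N'(t)=qN(t)$, i.e., after dividing by $t^p$,
\[
\int_\Omega \frac{\phi(|t\nabla u|)\,|t\nabla u|^2-q\,\Phi(|t\nabla u|)}{|t\nabla u|^{p}}\,|\nabla u|^{p}\,dx=\frac{p-q}{p}\,\|u\|_p^p.
\]
By Proposition~\ref{puloug} the function $s\mapsto(\phi(s)s^2-q\Phi(s))/s^p$ is strictly decreasing, hence so is the left-hand side above as a function of $t>0$. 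Moreover, Proposition~\ref{salvadora} yields $(\ell-q)\Phi(s)\le\phi(s)s^2-q\Phi(s)\le(m-q)\Phi(s)$ with $\ell-q>0$ (from~\ref{paper4f1}); combining this with Proposition~\ref{Narukawa} and $m<p$, the left-hand side tends to $+\infty$ as $t\to 0^+$ (the lower bound $(\ell-q)t^{m-p}\int_\Omega\Phi(|\nabla u|)\,dx$ does the job) and to $0$ as $t\to+\infty$ (dominated convergence, since $\Phi(|\nabla u|)\in L^1(\Omega)$ dominates the integrand once $t\ge 1$). Therefore the displayed identity has exactly one solution, which must coincide with the $t_e(u)$ found above; in particular $t\mapsto R_e(tu)$ strictly increases on $(0,t_e(u))$ and strictly decreases afterwards.

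Finally, the $C^1$–regularity of $u\mapsto t_e(u)$ would follow from the Implicit Function Theorem exactly as in Proposition~\ref{propimpor}: fixing $u_0\neq 0$ and setting $F(v,t):=R_e'(tv)(tv)=t\tfrac{d}{dt}R_e(tv)$, one has $F(u_0,t_e(u_0))=0$ and $F_t(u_0,t_e(u_0))=R_e''(t_e(u_0)u_0)(t_e(u_0)u_0)^2/t_e(u_0)=t_e(u_0)\tfrac{d^2}{dt^2}R_e(tu_0)\big|_{t_e(u_0)}$; one then obtains a neighbourhood $U\ni u_0$ and a $C^1$ map $t\colon U\to(0,\infty)$ with $F(v,t(v))\equiv 0$, and patching with the global uniqueness just proved gives $t(v)=t_e(v)$ on $U$, whence $t_e\in C^1(W^{1,\Phi}_0(\Omega)\setminus\{0\};(0,\infty))$. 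The step I expect to require genuine care is the strict negativity of $F_t$ at $(u_0,t_e(u_0))$ — equivalently, the non-degeneracy of the maximum of $t\mapsto R_e(tu_0)$ — which is precisely why the uniqueness argument must be routed through the scalar identity and Proposition~\ref{puloug} (the strict monotonicity there forcing the sign change of $\tfrac{d}{dt}R_e(tu_0)$ at $t_e(u_0)$ to be transversal), rather than relying on Proposition~\ref{ele} alone.
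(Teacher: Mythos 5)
Your proposal follows essentially the same route as the paper: reduce $\tfrac{d}{dt}R_e(tu)=0$ to the scalar identity $\tfrac{p-q}{p}\|u\|_p^p=L(t)$ with $L(t)=\int_\Omega\bigl[\phi(t|\nabla u|)|t\nabla u|^2-q\Phi(t|\nabla u|)\bigr]|\nabla u|^p/|t\nabla u|^{p}\,dx$, use Proposition~\ref{puloug} for strict monotonicity of $L$, the limits $L(0^+)=\infty$, $L(\infty)=0$ for existence of a root together with Proposition~\ref{ele}, and the Implicit Function Theorem as in Proposition~\ref{propimpor} for the $C^1$ claim. You actually supply slightly more detail than the paper (the Proposition~\ref{salvadora}/Proposition~\ref{Narukawa} bounds justifying the limits of $L$, and an explicit discussion of the nondegeneracy needed for the IFT, which the paper asserts without comment), but the argument is the same.
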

	\begin{proof}
		Recall that 
		\begin{equation*}
			R_e(tu) = \dfrac{ t^{-q}\int_{\Omega} \Phi(t |\nabla u |) dx - (t^{p-q}/p) \| u\|_p^p}{\|u\|^q_{q,a}/q}, t > 0, u \in W^{1,\Phi}_0(\Omega).
		\end{equation*}
		As a consequence, we mention that 
		\begin{equation*}
			\dfrac{d}{dt} R_e(tu) = \dfrac{ - q t^{ -1-q}\int_{\Omega} \Phi(t |\nabla u |) dx + t^{-1-q}\int_{\Omega} \phi(t |\nabla u |) |t\nabla u|^2 dx - (p-q)t^{p-q-1}\| u\|_p^p/p}{\|u\|^q_{q,a}/q}, t > 0.
		\end{equation*}	
		Therefore, 
		\begin{equation}\label{eq}
			\dfrac{d}{dt} R_e(tu) = 0, t > 0,
		\end{equation}
		is equivalent to the following identity
		\begin{equation*}\label{salvou}
			\dfrac{(p-q)}{p} \| u\|_p^p = \int_{\Omega} \dfrac{[\phi(t |\nabla u |)|t \nabla u|^2 - q \Phi(t |\nabla u|)  ] |\nabla u|^p}{|t \nabla u|^{p}} dx.
		\end{equation*}
		In view of Proposition \ref{puloug} it follows that the function $L: (0, \infty) \to \mathbb{R}$ given by 
		\begin{equation*}
			L(t) = \int_{\Omega} \dfrac{[\phi(t |\nabla u |)|t \nabla u|^2 - q \Phi(t |\nabla u|)  ] |\nabla u|^p}{|t \nabla u|^{p}} dx
		\end{equation*}
		is strictly decreasing. Moreover, we also mention that $$\displaystyle \lim_{t\to 0}L(t)=\infty \,\, \mbox{and} \,\, \displaystyle \lim_{t\to \infty}L(t)=0. $$ Hence, the identity \eqref{eq} admits at most a root $t_e(u) > 0$ for each $u \in W^{1,\Phi}_0(\Omega) \setminus \{0\}$.
		According to Proposition \ref{ele} we obtain that $t_e(u) > 0$ is unique. This ends the proof.
	\end{proof}
	
	\begin{proposition}\label{2}
		Suppose that assumptions $(\phi_1)-(\phi_4)$ and \ref{paper4f1}--\ref{paper4f4} are satisfied. Assume also that $u\in W^{1,\Phi}_0(\Omega) \setminus \{0\}$ satisfies $R_n(tu)=\lambda$ for some $t>0$. Then $R^\prime_n(tu)>0$ if and only if $J_{\lambda}^{\prime\prime}(tu)(tu,tu)>0$. Furthermore, we obtain that $R^\prime_n(tu)<0$ if and only if $J_{\lambda}^{\prime\prime}(tu)(tu,tu)<0$ and $R^\prime_n(tu)=0$ if and only if $J_{\lambda}^{\prime\prime}(tu)(tu,tu)=0$.
	\end{proposition}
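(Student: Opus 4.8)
The plan is to reduce the statement to a single differentiated algebraic identity along the ray $t\mapsto tu$. First I would record the elementary relation, valid for every $t>0$ and every fixed $u\in W^{1,\Phi}_0(\Omega)\setminus\{0\}$, that follows immediately from the definitions of $R_n$ and of $J_\lambda'$ (and which is exactly the content of \eqref{b11} and Remark \ref{rmk1}):
\begin{equation*}
	J_\lambda'(tu)(tu)=t^q\,\|u\|_{q,a}^q\big(R_n(tu)-\lambda\big).
\end{equation*}
I would also recall, with $\gamma(t):=J_\lambda(tu)$, the identities $\gamma'(t)=J_\lambda'(tu)(tu)/t$ and $\gamma''(t)=J_\lambda''(tu)(tu,tu)/t^2$ established at the start of Section 3, and note that for $t>0$ the quantities $\frac{d}{dt}R_n(tu)$ and $R_n'(tu)(tu)=t\,\frac{d}{dt}R_n(tu)$ have the same sign, so it suffices to track the sign of $\frac{d}{dt}R_n(tu)$.

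Next I would differentiate in $t$. Writing $g(t):=J_\lambda'(tu)(tu)=t\gamma'(t)$, on one hand
\begin{equation*}
	g'(t)=\gamma'(t)+t\gamma''(t)=\frac{1}{t}\Big(g(t)+J_\lambda''(tu)(tu,tu)\Big),
\end{equation*}
which may alternatively be verified directly by comparing the explicit formulas for $J_\lambda'(tu)(tu)$ and for $J_\lambda''(tu)(tu,tu)$ displayed in Section 3. On the other hand, differentiating the relation for $g$ above gives
\begin{equation*}
	g'(t)=q\,t^{q-1}\|u\|_{q,a}^q\big(R_n(tu)-\lambda\big)+t^q\,\|u\|_{q,a}^q\,\frac{d}{dt}R_n(tu).
\end{equation*}
Equating these two expressions for $g'(t)$ yields one identity linking $\frac{d}{dt}R_n(tu)$, $J_\lambda''(tu)(tu,tu)$, $g(t)$ and $R_n(tu)-\lambda$.

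Finally I would specialize to the hypothesis $R_n(tu)=\lambda$. Then $R_n(tu)-\lambda=0$ and hence $g(t)=J_\lambda'(tu)(tu)=0$, so both the term $q\,t^{q-1}\|u\|_{q,a}^q(R_n(tu)-\lambda)$ and the term $g(t)/t$ drop out, leaving
\begin{equation*}
	\frac{d}{dt}R_n(tu)=\frac{J_\lambda''(tu)(tu,tu)}{t^{q+1}\,\|u\|_{q,a}^q}.
\end{equation*}
Since $t>0$ and, by \ref{paper4f3} (so $a\ge a_0>0$ and $u\not\equiv 0$), $\|u\|_{q,a}^q>0$, the multiplicative factor is strictly positive; therefore $\frac{d}{dt}R_n(tu)$ and $J_\lambda''(tu)(tu,tu)$ have exactly the same sign — positive, negative, or zero — which, via $R_n'(tu)(tu)=t\,\frac{d}{dt}R_n(tu)$, delivers the three stated equivalences simultaneously.

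I do not expect a genuine obstacle here: the argument is essentially bookkeeping. The only points deserving a line of care are (i) the legitimacy of differentiating under the integral sign in $t$ and the finiteness of all the integrals, which follow from $(\phi_1)$–$(\phi_4)$, \ref{paper4f1} and the embeddings recalled in Section~\ref{orlicz}; and (ii) the strict positivity of $\|u\|_{q,a}^q$, which is precisely where \ref{paper4f3} enters and ensures no sign information is lost on dividing. The conceptual content is simply the clean identity $J_\lambda'(tu)(tu)=t^q\|u\|_{q,a}^q(R_n(tu)-\lambda)$ and its derivative, evaluated on $\mathcal N_\lambda$.
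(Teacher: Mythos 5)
Your proof is correct and follows essentially the same route as the paper: the paper's entire argument is the identity $\frac{d}{dt}R_n(tu)=\frac{1}{t}\,\frac{J_\lambda''(tu)(tu,tu)}{\|tu\|_{q,a}^q}$ valid when $R_n(tu)=\lambda$, which (since $\|tu\|_{q,a}^q=t^q\|u\|_{q,a}^q$) is exactly the relation you obtain by differentiating $J_\lambda'(tu)(tu)=t^q\|u\|_{q,a}^q\bigl(R_n(tu)-\lambda\bigr)$ and using $g(t)=0$. Your version merely makes explicit the derivation that the paper asserts in one line.
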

	\begin{proof}
		Firstly, we observe that 		
		\begin{equation}\label{rel-J-R}		
			R^\prime_n(tu)u=\frac{d}{dt}R_n(tu)=\frac{1}{t}\frac{J_{\lambda}^{\prime\prime}(tu)(tu,tu)}{\|tu\|_{q,a}^q}
		\end{equation}
		holds for each $u \in W^{1,\Phi}_0(\Omega) \setminus \{0\}$ such that $R_n(tu) = \lambda, t > 0$. The desired results follow using the last identity.  
	\end{proof}

	\begin{proposition}\label{1}
		Suppose that assumptions $(\phi_1)-(\phi_4)$ and \ref{paper4f1}--\ref{paper4f4} are satisfied. Assume also that $u\in W^{1,\Phi}_0(\Omega) \setminus \{0\}$ satisfies $R_e (tu) =\lambda$ for some $t > 0$. Then we obtain $R^\prime_e (tu) > 0$ if and only if $J_{\lambda}^\prime
		(tu)tu > 0$. Furthermore, we obtain $R^\prime_e(tu)<0$ if and only if $J_{\lambda}^\prime(tu)tu<0$. 
		Moreover, we mention also that $R^\prime_e(tu)=0$ if, and only if $J_{\lambda}^\prime(tu)tu=0$.
	\end{proposition}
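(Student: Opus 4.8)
The plan is to establish a quotient-rule identity for $R_e$ completely parallel to \eqref{rel-J-R}: namely, that at any point $t>0$ with $R_e(tu)=\lambda$ one has
\begin{equation*}
R_e'(tu)u=\frac{d}{dt}R_e(tu)=\frac{q}{t^{q+1}\|u\|_{q,a}^q}\,J_\lambda'(tu)(tu),
\end{equation*}
and then to read off the three equivalences from the strict positivity of the prefactor.

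First I would write $R_e(tu)=N(t)/D(t)$ with $N(t)=\int_\Omega\Phi(t|\nabla u|)\,dx-\frac{t^p}{p}\|u\|_p^p$ and $D(t)=\frac{t^q}{q}\|u\|_{q,a}^q$, noting that $D(t)>0$ for every $t>0$ because $a\geq a_0>0$ by \ref{paper4f3} and $u\neq 0$. Since $J_\lambda(tu)=N(t)-\lambda D(t)$ and $J_\lambda$ is of class $C^1$, differentiating in $t$ gives $\frac{d}{dt}J_\lambda(tu)=N'(t)-\lambda D'(t)$, while the chain rule also gives $\frac{d}{dt}J_\lambda(tu)=J_\lambda'(tu)u=\tfrac1t J_\lambda'(tu)(tu)$. (The differentiation under the integral sign for $t\mapsto\int_\Omega\Phi(t|\nabla u|)\,dx$ is legitimate thanks to the growth bounds of Proposition \ref{Narukawa} together with the $C^1$ regularity of $\Phi$; this is precisely what underlies the $C^1$ character of $J_\lambda$ recorded in the introduction.)

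Next I would differentiate the quotient, $\frac{d}{dt}R_e(tu)=\frac{N'(t)D(t)-N(t)D'(t)}{D(t)^2}$, and use the hypothesis $R_e(tu)=\lambda$, i.e. $N(t)=\lambda D(t)$, to collapse the numerator:
\begin{equation*}
\frac{d}{dt}R_e(tu)=\frac{N'(t)-\lambda D'(t)}{D(t)}=\frac{1}{D(t)}\,\frac{d}{dt}J_\lambda(tu)=\frac{J_\lambda'(tu)(tu)}{t\,D(t)}=\frac{q}{t^{q+1}\|u\|_{q,a}^q}\,J_\lambda'(tu)(tu).
\end{equation*}
Since $t>0$ and $\|u\|_{q,a}^q>0$, the factor $q/(t^{q+1}\|u\|_{q,a}^q)$ is strictly positive, so $\frac{d}{dt}R_e(tu)$ and $J_\lambda'(tu)(tu)$ share the same sign, which yields simultaneously the three asserted equivalences ($>0$, $<0$, and $=0$).

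I do not expect a genuine obstacle here: the argument is the exact analogue of the proof of Proposition \ref{2}, with $R_n$, $J_\lambda''(tu)(tu,tu)$ and $\|tu\|_{q,a}^q$ replaced respectively by $R_e$, $J_\lambda'(tu)(tu)$ and $\tfrac1q\|tu\|_{q,a}^q$; the only point deserving a word of care is the interchange of differentiation and integration noted above, which is already available. Consequently the proof of the excerpt's statement can simply be indicated as ``analogous to the proof of Proposition \ref{2}'', exactly as the authors do for Proposition \ref{ele}.
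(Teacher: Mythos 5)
Your proposal is correct and follows essentially the same route as the paper: the authors' proof consists precisely of the identity $R_e'(tu)u=\frac{d}{dt}R_e(tu)=\frac{q}{t}\,\frac{J_\lambda'(tu)tu}{\|tu\|_{q,a}^q}$ (which agrees with your prefactor, since $\|tu\|_{q,a}^q=t^q\|u\|_{q,a}^q$), from which the three sign equivalences are read off. You merely supply the quotient-rule computation that the paper leaves implicit.
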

	\begin{proof}
		Firstly, we mention that
		$$R^\prime_e(tu)u=\frac{d}{dt}R_e(tu)=\frac{q}{t}\frac{J_{\lambda}^\prime(tu)tu}
		{\displaystyle \|tu\|_{q,a}^q}$$
		holds for each $u \in W^{1,\Phi}_0(\Omega) \setminus \{0\}$ such that $R_e(tu) = \lambda, t > 0$. The desired results follow using the last identity.
	\end{proof}
	
	It is important to mention that the function $Q_e: (0, \infty) \to \mathbb{R}$ given by $Q_e(t) = R_e(tu)$ satisfies $Q_e'(t) > 0$ if and only if $t \in (0, t_e(u))$. Furthermore, $Q_e'(t) < 0$ if and only if $t \in (t_e(u), \infty)$ and $Q_e'(t) = 0$ if and only if $t = t_e(u)$.
	Now, we shall consider the functional $\Lambda_e :  W^{1,\Phi}_0(\Omega) \setminus \{0\} \to \mathbb{R}$ given by 
	\begin{equation*}
		\Lambda_e(u) = \sup_{t > 0} R_e(t u) = R_e(t_e(u) u).
	\end{equation*}
	Hence, we consider the following extreme 
	\begin{equation*}
		\lambda_* = \inf_{u \in  W^{1,\Phi}_0(\Omega) \setminus \{0\}} \Lambda_e(u).
	\end{equation*}
	As a consequence, we shall consider the following result:
	\begin{proposition}\label{marcos2}
		Suppose that assumptions $(\phi_1)-(\phi_4)$ and \ref{paper4f1}--\ref{paper4f4} are satisfied. Then the functional $u \mapsto \Lambda_e(u)$ satisfies the following statements: 
		\begin{itemize}
			\item[i)] $\Lambda_e$ is zero homogeneous, that is, $\Lambda_e(s u) = \Lambda_e(u)$ for each $s > 0$ and $u \in W^{1,\Phi}_0(\Omega) \setminus \{0\}$;
			\item[ii)] The functional $\Lambda_e: W_0^{1,\Phi}(\Omega)\to (0,\infty)$ is differentiable and weakly lower semicontinuous;
			\item[iii)] There exists $c > 0$ such that $\|t_e(u)u\|\geq c$ holds for each $u \in W^{1,\Phi}_0(\Omega) \setminus \{0\}$;
			\item[iv)] There exists $C=C(\ell,m,p,q,\Omega) > 0$ such that $\Lambda_e(u) \geq C$ for each $u \in W^{1,\Phi}_0(\Omega) \setminus \{0\}$;
			
			\item[v)] $\lambda_* > 0$ is attained, i.e., there exists $\widetilde{u}\in W_0^{1,\Phi}(\Omega)\setminus\{0\}$ such that $\lambda_* = \Lambda_e(\widetilde{u})$
			\item[vi)] It holds that the function $w^*:=t_e(\widetilde{u})\widetilde{u}$ is a solution of $(P_{\lambda_*})$.
		\end{itemize}
	\end{proposition}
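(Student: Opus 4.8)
The plan is to mimic, essentially line by line, the argument given for $\Lambda_n$ in Proposition~\ref{marcos}, replacing $R_n$ and $t_n$ by $R_e$ and $t_e$ and invoking Propositions~\ref{ele}, \ref{propimpor2} and \ref{1} wherever the $R_n$-analogues were used. For i), since by Proposition~\ref{propimpor2} the number $t_e(u)$ is uniquely determined by $\frac{d}{dt}R_e(tu)=0$, the identity $R_e'\big(\tfrac{t_e(u)}{s}(su)\big)\big(\tfrac{t_e(u)}{s}(su)\big)=0$ forces $t_e(su)=t_e(u)/s$, hence $\Lambda_e(su)=R_e(t_e(su)su)=R_e(t_e(u)u)=\Lambda_e(u)$. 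For ii), differentiability of $\Lambda_e$ is immediate from Proposition~\ref{propimpor2}; for the weak lower semicontinuity I would take $u_k\rightharpoonup u\neq 0$, note that $v\mapsto R_e'(tv)(tv)$ is weakly lower semicontinuous (because $v\mapsto\int_\Omega\Phi(t|\nabla v|)\,dx$ is, and the remaining $L^p$- and $L^q(a\,dx)$-integrals converge by the compact embeddings available since $q<\ell\le m<p<\ell^*$), deduce $0=R_e'(t_e(u)u)(t_e(u)u)\le R_e'(t_e(u)u_k)(t_e(u)u_k)$ for $k\gg 1$, whence $t_e(u)\le t_e(u_k)$ by the sign analysis of Proposition~\ref{1}, and finally combine the weak lower semicontinuity of $v\mapsto R_e(tv)$ with the monotonicity of $t\mapsto R_e(tu_k)$ on $(0,t_e(u_k)]$ to get $\Lambda_e(u)=R_e(t_e(u)u)\le\liminf_k R_e(t_e(u)u_k)\le\liminf_k R_e(t_e(u_k)u_k)=\liminf_k\Lambda_e(u_k)$.

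For iii) and iv) I would set $w=t_e(u)u$ and use that the critical point equation \eqref{eq} reads $\frac{p-q}{p}\|w\|_p^p=\int_\Omega\big[\phi(|\nabla w|)|\nabla w|^2-q\Phi(|\nabla w|)\big]\,dx$. Bounding the right-hand side below by $(\ell-q)\int_\Omega\Phi(|\nabla w|)\,dx$ via Proposition~\ref{salvadora}, then by $(\ell-q)\min\{\|w\|^\ell,\|w\|^m\}$ via Proposition~\ref{Narukawa}, and using $\|w\|_p^p\le S_p^p\|w\|^p$ from $W_0^{1,\Phi}(\Omega)\hookrightarrow L^p(\Omega)$, one gets an inequality of the form $\|w\|^p\ge c_1\min\{\|w\|^\ell,\|w\|^m\}$; since $p>m\ge\ell$ this yields $\|w\|\ge c>0$ uniformly in $u$, proving iii). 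For iv) I would substitute the same identity into $R_e(w)=A(w)/B(w)$ with $A(w)=\int_\Omega\Phi(|\nabla w|)\,dx-\frac1p\|w\|_p^p$ and $B(w)=\frac1q\|w\|^q_{q,a}$, obtaining $A(w)=\frac{1}{p-q}\int_\Omega\big[p\Phi(|\nabla w|)-\phi(|\nabla w|)|\nabla w|^2\big]\,dx\ge\frac{p-m}{p-q}\int_\Omega\Phi(|\nabla w|)\,dx$ by Proposition~\ref{salvadora} again, while $B(w)\le\frac1q\|a\|_{\ell/(\ell-q)}S_\ell^q\|w\|^q$ by Hölder and $W_0^{1,\Phi}(\Omega)\hookrightarrow L^\ell(\Omega)$; together with Proposition~\ref{Narukawa} and iii) this gives $\Lambda_e(u)\ge C(\ell,m,p,q,\Omega)>0$.

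For v) I would take a minimizing sequence $(u_k)$ for $\lambda_*$, normalize by i) so that $t_e(u_k)=1$, and use the bound from iv) to see $(u_k)$ is bounded in $W_0^{1,\Phi}(\Omega)$; up to a subsequence $u_k\rightharpoonup\widetilde u$. To show $\widetilde u\neq 0$ I would argue by contradiction: if $\widetilde u\equiv 0$ then $u_k\to 0$ in $L^p(\Omega)$ by compactness, and the critical point identity combined with Propositions~\ref{salvadora} and \ref{Narukawa} forces $\|u_k\|\to 0$, contradicting iii). Weak lower semicontinuity of $\Lambda_e$ from ii) then gives $\lambda_*\le\Lambda_e(\widetilde u)\le\liminf_k\Lambda_e(u_k)=\lambda_*$, so $\lambda_*=\Lambda_e(\widetilde u)$ is attained (replacing $\widetilde u$ by $|\widetilde u|$, which leaves $\Lambda_e$ unchanged, one may take $\widetilde u\ge 0$). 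The heart of the statement is vi): since $\widetilde u$ minimizes the $C^1$ functional $\Lambda_e$ over $W_0^{1,\Phi}(\Omega)\setminus\{0\}$ it is a critical point, and differentiating $\Lambda_e(u)=R_e(t_e(u)u)$ while using $R_e'(t_e(u)u)u=\frac{d}{dt}R_e(tu)\big|_{t=t_e(u)}=0$ makes the $t_e'(\widetilde u)$-term drop out, leaving $0=\Lambda_e'(\widetilde u)\varphi=t_e(\widetilde u)\,R_e'(w^*)\varphi$ for all $\varphi\in W_0^{1,\Phi}(\Omega)$ with $w^*=t_e(\widetilde u)\widetilde u$; as $t_e(\widetilde u)>0$ this gives $R_e'(w^*)=0$. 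Writing $R_e=A/B$ as above, $R_e'(w^*)=0$ is equivalent to $A'(w^*)=R_e(w^*)B'(w^*)=\lambda_*B'(w^*)$ (since $R_e(w^*)=\Lambda_e(\widetilde u)=\lambda_*$), and expanding $A'$ and $B'$ this is exactly $J_{\lambda_*}'(w^*)\varphi=0$ for all $\varphi$, i.e. $w^*$ is a weak solution of $(P_{\lambda_*})$, which by \eqref{b111} also satisfies $J_{\lambda_*}(w^*)=0$. The main obstacle I anticipate is precisely this last step: one must carefully justify that the $C^1$ regularity of $t_e$ from Proposition~\ref{propimpor2} transfers to $\Lambda_e$ with the correct derivative, keep the Fréchet derivative $R_e'(w^*)\varphi$ on the full space distinct from the one-variable derivative $\frac{d}{dt}R_e(tu)$, and check that the denominator $B(w^*)$ stays bounded away from zero; the remaining items are routine transcriptions of the estimates already carried out for $\Lambda_n$ in Proposition~\ref{marcos}.
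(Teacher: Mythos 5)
Your proposal is correct and takes exactly the route the paper intends: the paper's own proof of Proposition \ref{marcos2} consists of the single remark that it ``follows the same lines'' as Proposition \ref{marcos}, and your argument is a faithful transcription of that proof with $R_n,t_n$ replaced by $R_e,t_e$ and Propositions \ref{ele}, \ref{propimpor2}, \ref{puloug} and \ref{1} invoked in place of their $R_n$-counterparts. In particular your treatment of item vi) correctly identifies the one genuine difference from Proposition \ref{marcos}: since the numerator of $R_e$ is built from $\Phi$ rather than $\phi(t)t^2$, the Euler--Lagrange equation for the critical point $\widetilde u$ of $\Lambda_e$ is the original problem $(P_{\lambda_*})$ rather than the auxiliary problem \eqref{pia}.
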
 
	\begin{proof}
		The proof follows the same lines discussed in the prof of Proposition \ref{marcos}. We omit the details.
	\end{proof}
	
	\begin{proposition}\label{proje}
		Suppose that assumptions $(\phi_1)-(\phi_4)$ and \ref{paper4f1}--\ref{paper4f4} are satisfied. Assume also that $\lambda \in (0, \lambda^*)$. Then the fibering map function $\gamma(t) = J_\lambda (tu), t > 0,  u \in W^{1, \Phi}_0(\Omega) \setminus \{0\}$ has the following properties:
		\begin{itemize}
			\item[i)] There exist unique $t^{n,+}(u) < t_n(u) < t^{n,-}(u)$ in such way that $t^{n,+}(u) u \in \mathcal{N}_{\lambda}^+$ and $t^{n,-}(u) u \in \mathcal{N}_{\lambda}^-$. Furthermore, $t = t^{n,+}(u)$ is a local minimum point for the fibering map $\gamma$.
			\item[ii)] It holds that $t = t^{n,-}(u)$ is a global maximum point for the function $\gamma$ for each $\lambda \in (0, \lambda_*)$. For $\lambda \in [\lambda_*, \lambda^*)$ we obtain that $t = t^{n,-}(u)$ is only a local maximum point for $\gamma$.
			\item[iii)] The functions $u \mapsto t^{n,+}(u)$ and $u \mapsto t^{n,-}(u)$ are in $C^1$ class.  
		\end{itemize}	
	\end{proposition}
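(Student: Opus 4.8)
The plan is to translate everything into the language of the Rayleigh quotient $R_n$ and the auxiliary map $t_n(u)$ established in Propositions \ref{prop1}--\ref{propimpor}, together with the sign dictionary of Remark \ref{rmk1} and Proposition \ref{2}. Fix $u \in W^{1,\Phi}_0(\Omega)\setminus\{0\}$ and recall from the discussion after Remark \ref{rmk11} that $Q_n(t):=R_n(tu)$ is strictly increasing on $(0,t_n(u))$, strictly decreasing on $(t_n(u),\infty)$, with $\lim_{t\to 0^+}Q_n(t)=\lim_{t\to\infty}Q_n(t)<0$ (by Proposition \ref{prop1}) and maximum value $\Lambda_n(u)=R_n(t_n(u)u)\geq \lambda^* > \lambda$. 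Hence for each $\lambda\in(0,\lambda^*)$ the horizontal line at height $\lambda$ meets the graph of $Q_n$ in exactly two points, which I name $t^{n,+}(u)<t_n(u)<t^{n,-}(u)$; by \eqref{b11} these are precisely the two positive multipliers with $t^{n,\pm}(u)u\in\mathcal N_\lambda$. Since $Q_n'>0$ at $t^{n,+}(u)$ and $Q_n'<0$ at $t^{n,-}(u)$, Proposition \ref{2} gives $J_\lambda''(t^{n,+}(u)u)(\cdot,\cdot)>0$ and $J_\lambda''(t^{n,-}(u)u)(\cdot,\cdot)<0$, i.e. $t^{n,+}(u)u\in\mathcal N_\lambda^+$ and $t^{n,-}(u)u\in\mathcal N_\lambda^-$. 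This proves the membership claims in item i).

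For the local/global nature of these points I would examine the fibering map $\gamma(t)=J_\lambda(tu)$ directly. By Remark \ref{rmk1}, $\gamma'(t)=J_\lambda'(tu)(tu)/t$ has the same sign as $R_n(tu)-\lambda=Q_n(t)-\lambda$. Therefore $\gamma'<0$ on $(0,t^{n,+}(u))$, $\gamma'>0$ on $(t^{n,+}(u),t^{n,-}(u))$, and $\gamma'<0$ on $(t^{n,-}(u),\infty)$: so $t^{n,+}(u)$ is a strict local minimum and $t^{n,-}(u)$ is a strict local maximum of $\gamma$ on $(0,\infty)$. Since $\gamma(0)=0$ and $\gamma(t)\to-\infty$ as $t\to\infty$ (the $-t^p/p\|u\|_p^p$ term dominates, using $p>m$ and Proposition \ref{Narukawa}), $t^{n,-}(u)$ is the only candidate for a global maximum, and it is the global maximum precisely when $\gamma(t^{n,-}(u))\geq 0$. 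By Remark \ref{rmk11} and Proposition \ref{1}, $\gamma(t^{n,-}(u))\geq 0$ is equivalent to $R_e(t^{n,-}(u)u)\geq\lambda$, and the analysis of $Q_e(t)=R_e(tu)$ (its maximum is $\Lambda_e(u)\geq\lambda_*$, attained at $t_e(u)$) shows $R_e(t^{n,-}(u)u)>\lambda$ whenever $\lambda<\lambda_*$ after checking that $t^{n,-}(u)$ lies on the appropriate branch relative to $t_e(u)$. For $\lambda\geq\lambda_*$ one can have $\gamma(t^{n,-}(u))<0$, so $t^{n,-}(u)$ is only a local maximum; this gives item ii).

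Item iii) is the $C^1$ regularity of $u\mapsto t^{n,\pm}(u)$, and this is where I expect to spend the most care. The natural tool is the Implicit Function Theorem applied to $\Psi(v,t):=R_n'(tv)(tv)=J_\lambda'(tv)(tv)/t$ (equivalently to $\gamma_v'(t)$), exactly as in the proof of Proposition \ref{propimpor}: one has $\Psi(u,t^{n,\pm}(u))=0$, and the key non-degeneracy is $\partial_t\Psi(u,t^{n,\pm}(u))\neq 0$. This partial derivative is a nonzero multiple of $J_\lambda''(t^{n,\pm}(u)u)(t^{n,\pm}(u)u,t^{n,\pm}(u)u)$, which is strictly positive at $t^{n,+}(u)$ and strictly negative at $t^{n,-}(u)$ by the first paragraph — so it is nonzero in both cases, the hypothesis of the Implicit Function Theorem holds, and we obtain a local $C^1$ branch near any fixed $u_0$; uniqueness of $t^{n,\pm}(u)$ forces this branch to coincide with $t^{n,\pm}$, and since $u_0$ is arbitrary we conclude $t^{n,\pm}\in C^1(W^{1,\Phi}_0(\Omega)\setminus\{0\};(0,\infty))$. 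The only genuinely delicate points are: verifying that the two roots stay strictly separated from $0$ and from each other as $u$ varies (so the branches do not collide), which follows from Proposition \ref{marcos} iii) and the strict concavity-type behaviour of $Q_n$ near $t_n(u)$; and confirming the sign of $\gamma(t^{n,-}(u))$ in item ii), which requires locating $t^{n,-}(u)$ relative to the critical point $t_e(u)$ of $R_e$ — that comparison is the main obstacle and will use Propositions \ref{1}, \ref{2} and \ref{propimpor2} to relate the $R_n$-picture and the $R_e$-picture on the same ray.
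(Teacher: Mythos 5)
Your proposal is correct and follows essentially the same route as the paper: the two roots of $R_n(tu)=\lambda$ sitting below the maximum $\Lambda_n(u)\geq\lambda^*>\lambda$, Proposition \ref{2} for the classification into $\mathcal{N}_\lambda^{\pm}$, the comparison with $R_e$ via Remark \ref{rmk11} for the global-maximum claim, and the Implicit Function Theorem applied to $(u,t)\mapsto J_\lambda'(tu)tu$ for the $C^1$ regularity. One small correction: by Proposition \ref{prop1} one has $\lim_{t\to 0^+}R_n(tu)=0$ rather than a negative limit (the two-root count is unaffected since $\lambda>0$), and the comparison of $t^{n,-}(u)$ with $t_e(u)$ that you rightly flag as the delicate step is exactly the point the paper passes over by directly asserting $\lambda<\lambda_*\leq R_e(t^{n,-}(u)u)$.
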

	\begin{proof}
		Firstly, we shall prove the item $i)$. It is easy to verify that $tu \in \mathcal{N}_\lambda$ if and only if $R_n(tu) = \lambda$ where $t > 0$ and $u \in  (W^{1,\Phi}_0(\Omega) \setminus \{0\})$. 
		Recall that $\lambda \in (0,\lambda^*)$ and $u \in (W^{1,\Phi}_0(\Omega) \setminus \{0\})$. As a consequence, we obtain that
		\begin{equation*}
			0 < \lambda <  \lambda^* = \inf_{w \in  (W^{1,\Phi}_0(\Omega) \setminus \{0\})} \Lambda_n(w) \leq \Lambda_n(u).
		\end{equation*}
		Hence, the equation $R_n(tu) = \lambda$ has exactly two roots which are denoted by $t^{n,\pm}(u)$ such that $t^{n,+}(u)u \in \mathcal{N}_\lambda^+$ and $t^{n,-}(u)u \in \mathcal{N}_\lambda^-$. As a product, for each $u \in  W^{1,\Phi}_0(\Omega) \setminus \{0\}$, we obtain $t^{n,+}(u) < t_n(u) < t^{n,+}(u)$. It is important to mention also that 
		\begin{equation*}\label{al}
			\left.\dfrac{d}{d t} R_n(tu)\right|_{t=t_{n,+} (u)} > 0\qquad\mbox{and}\qquad \left.\dfrac{d}{d t} R_n(tu)\right|_{t=t_{n,-} (u)} < 0.
		\end{equation*} 
		As a consequence, by using Proposition \ref{2}, we infer that 
		\begin{equation*}
			\left.\dfrac{d^2}{d t^2} J_\lambda (tu)\right|_{t=t_{n,+} (u)} > 0\qquad\mbox{and}\qquad \left. \dfrac{d^2}{d t^2} J_\lambda(tu)\right|_{t=t_{n,-} (u)} < 0.
		\end{equation*}
		Hence, $t^{n,+}(u)$ is a local minimum point for the fibering map function $\gamma$. This ends the proof of item $i)$.
		
		Now we shall prove the item $ii)$. Using the same ideas discussed just above we observe that $t^{n,-}(u)$ is always a local maximum point for the fibering map $\gamma$ for each $\lambda \in (0, \lambda^*)$.  Furthermore, for each $\lambda \in (0, \lambda_*)$, we obtain that 
		\begin{equation*}
			R_n(t^{n,-}(u)u) = \lambda < \lambda_* \leq R_e(t^{n,-}(u)u).
		\end{equation*}
		In particular, by using Remark \ref{rmk11}, we obtain that 
		\begin{equation*}
			\gamma(t^{n,-}(u)) = J_\lambda (t^{n,-}(u)u) > J_{R_e(t^{n,-}(u)u)} (t^{n,-}(u)u) =0.
		\end{equation*}
		Hence, $t = t^{n,-}(u)$ is a global maximum point for the fibering map for each $\lambda \in (0, \lambda_*)$. This ends the proof of item $ii)$.
		
		Now, we shall prove the item $iii)$. Consider the auxiliary function $H^{\pm} : (W^{1,\Phi}_0(\Omega) \setminus \{0\}) \times (0, \infty) \to (-\infty, \infty)$ given by $H(u,t) = J'_{\lambda}(tu) tu$. Notice also that $H(u,t) = 0$ if and only if $t u \in \mathcal{N}_\lambda$. Furthermore, $\partial_t H^{\pm}(u,t) \neq 0$ if and only if $tu \in \mathcal{N}_\lambda^{\pm}$. In particular, for each $u \in \mathcal{N}_\lambda^{\pm}$ it follows that $\partial_t H^{\pm}(u,1) \neq 0$. Therefore, for each $v \in \mathcal{N}_\lambda^{\pm}$ and taking into account the Implicit Function Theorem \cite{drabek}, we obtain that there exist unique functionals $t^{n,\pm}: B_{\epsilon}(v) \to (1-\delta, 1 + \delta)$ such that $H(u, t^{n,\pm}(u) ) = 0$ for each $u \in B_{\epsilon}(v)$ where $\epsilon, \delta > 0$ are small and $B_{\epsilon}(v) = \{ u \in W^{1,\Phi}_0(\Omega) : \|u - v\| < \epsilon \}$.   This ends the proof.
	\end{proof}

	\begin{proposition}
		Suppose that assumptions $(\phi_1)-(\phi_4)$ and \ref{paper4f1}--\ref{paper4f3} are satisfied. Assume also that $\lambda > 0$. Then the sets $\mathcal{N}_{\lambda}^+$ and $\mathcal{N}_{\lambda}^-$ are $C^1$ manifolds.
	\end{proposition}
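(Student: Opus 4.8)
The plan is to obtain the manifold structure from the implicit function theorem applied to the constraint functional $I_\lambda:W^{1,\Phi}_0(\Omega)\setminus\{0\}\to\mathbb{R}$, $I_\lambda(u):=J_\lambda'(u)u$, whose explicit expression is the one displayed just before \eqref{ed2}. Under $(\phi_1)-(\phi_4)$ and \ref{paper4f1}--\ref{paper4f3} the functional $I_\lambda$ is of class $C^1$ (the only nonstandard term, $u\mapsto\int_\Omega\phi(|\nabla u|)|\nabla u|^2\,dx$, being $C^1$ because $\phi\in C^1$ and $t\mapsto t\phi(t)$, $t\mapsto\phi'(t)t$ grow polynomially by $(\phi_4)$, cf. Propositions \ref{Narukawa} and \ref{salvadora}), and its Gateaux derivative is $I_\lambda'(u)v=J_\lambda''(u)(u,v)+J_\lambda'(u)v$. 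In particular, on the Nehari set, where $J_\lambda'(u)u=0$, one has
\begin{equation*}
I_\lambda'(u)u=J_\lambda''(u)(u,u),
\end{equation*}
which is exactly the quantity appearing in \eqref{ed2} that defines $\mathcal{N}_\lambda^{+}$ and $\mathcal{N}_\lambda^{-}$.

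First I would fix $u_0\in\mathcal{N}_\lambda^+$ and use the continuity of $u\mapsto J_\lambda''(u)(u,u)$ together with the openness of $W^{1,\Phi}_0(\Omega)\setminus\{0\}$ to choose an open neighbourhood $U$ of $u_0$ with $0\notin U$ and $J_\lambda''(v)(v,v)>0$ for every $v\in U$. On such a $U$ we have $\mathcal{N}_\lambda^+\cap U=I_\lambda^{-1}(0)\cap U$, since any element of $\mathcal{N}_\lambda$ lying in $U$ automatically belongs to $\mathcal{N}_\lambda^+$; in particular $\mathcal{N}_\lambda^+$ is relatively open in $\mathcal{N}_\lambda$, and the degenerate set $\mathcal{N}_\lambda^0$ never enters, so no restriction on $\lambda$ beyond $\lambda>0$ is required.

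Next I would check that $0$ is a regular value of $I_\lambda|_U$: for every $v\in I_\lambda^{-1}(0)\cap U$ we have $I_\lambda'(v)v=J_\lambda''(v)(v,v)>0$, so the bounded linear functional $I_\lambda'(v):W^{1,\Phi}_0(\Omega)\to\mathbb{R}$ is nonzero, hence surjective, and its (closed) kernel, of codimension one, is complemented by $\mathrm{span}\{v\}$. The implicit function theorem \cite{drabek} then exhibits $I_\lambda^{-1}(0)$, in a neighbourhood of $v$, as the graph of a $C^1$ map over $\ker I_\lambda'(v)$, i.e. $\mathcal{N}_\lambda^+$ is locally a $C^1$ Banach submanifold of $W^{1,\Phi}_0(\Omega)$ of codimension one. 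Letting $v$ range over $\mathcal{N}_\lambda^+$ produces an atlas, so $\mathcal{N}_\lambda^+$ is a $C^1$ manifold. The argument for $\mathcal{N}_\lambda^-$ is word for word the same, with the inequality $J_\lambda''(v)(v,v)>0$ replaced throughout by $J_\lambda''(v)(v,v)<0$.

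I expect the only point requiring genuine care to be the $C^1$-regularity of the nonhomogeneous constraint $I_\lambda$ and of the map $u\mapsto J_\lambda''(u)(u,u)$ in the Orlicz--Sobolev setting, which is where the structural conditions on $\phi$ and $\phi'$ from $(\phi_1)-(\phi_4)$ are used; once this is in hand, the manifold property is the standard fact that the sign condition $J_\lambda''(u)(u,u)\neq 0$ makes the Nehari constraint a natural one, so the regular value theorem applies verbatim.
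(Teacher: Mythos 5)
Your proposal is correct and follows essentially the same route as the paper: apply the implicit function theorem to the constraint functional $R(u)=J_\lambda'(u)u$, using that on the Nehari set $R'(u)u=J_\lambda''(u)(u,u)\neq 0$ for $u\in\mathcal{N}_\lambda^\pm$. Your extra localization step (shrinking to a neighbourhood where $J_\lambda''(v)(v,v)$ keeps a fixed sign, so that $\mathcal{N}_\lambda^\pm\cap U=R^{-1}(0)\cap U$) is in fact a welcome refinement, since for $\lambda\geq\lambda^*$ the set $\mathcal{N}_\lambda^0$ is nonempty and the paper's global identification of $R^{-1}(0)$ with $\mathcal{N}_\lambda^+\cup\mathcal{N}_\lambda^-$ needs exactly this local correction.
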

	\begin{proof}
		Consider the functional $R : W^{1,\Phi}_0(\Omega) \setminus \{0\} \to \mathbb{R}$ given by $R(u) = J_{\lambda}'(u)u$. It is not hard to prove that $R$ is in $C^1$ class. Recall also that $R'(u)u = J_{\lambda}''(u)(u,u) \neq 0$ holds for each $u \in \mathcal{N}_{\lambda}^+ \cup \mathcal{N}_{\lambda}^-$. Furthermore, $\mathcal{N}_{\lambda}^+ \cup \mathcal{N}_{\lambda}^- = R^{-1}(0)$. The desired result follows from the Implicit Function Theorem \cite{drabek}.
		This finishes the proof. 
	\end{proof}

	\begin{proposition}
		Suppose that assumptions $(\phi_1)-(\phi_4)$ and \ref{paper4f1}--\ref{paper4f4} are satisfied. Then the we obtain the following assertions:
		\begin{itemize}
			\item[i)] There holds $t_n(u) < t_e(u)$ for each $u \in W^{1,\Phi}_{0}(\Omega) \setminus \{0\}$.
			\item[ii)] There holds $\Lambda_e(u) < \Lambda_n(u)$ for each $u \in W^{1,\Phi}_{0}(\Omega) \setminus \{0\}$.
			\item[iii)] It holds that $0 < \lambda_* < \lambda^* < \infty$.
		\end{itemize}
	\end{proposition}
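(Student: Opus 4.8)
The plan is to reduce everything to one exact differential identity between the two nonlinear Rayleigh quotients. First I would establish that, for every fixed $u\in W^{1,\Phi}_0(\Omega)\setminus\{0\}$ and every $t>0$,
\begin{equation*}
	\frac{d}{dt}\bigl(t^{q}R_e(tu)\bigr)=q\,t^{q-1}R_n(tu),\qquad\text{equivalently}\qquad \frac{d}{dt}R_e(tu)=\frac{q}{t}\bigl(R_n(tu)-R_e(tu)\bigr).
\end{equation*}
This follows from a direct differentiation, using only $\Phi'(r)=r\phi(r)$ together with the homogeneities $\|tu\|_p^p=t^p\|u\|_p^p$ and $\|tu\|_{q,a}^q=t^q\|u\|_{q,a}^q$. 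Since $t^{q}R_e(tu)=\frac{q}{\|u\|_{q,a}^q}\bigl(\int_\Omega\Phi(t|\nabla u|)\,dx-\frac{t^p}{p}\|u\|_p^p\bigr)\to0$ as $t\to0^{+}$ (by Proposition \ref{Narukawa}, $\Phi(t|\nabla u|)\le\zeta_1(t)\Phi(|\nabla u|)$ with $\zeta_1(t)\to0$), integrating the identity gives the representation
\begin{equation*}
	R_e(tu)=\frac{q}{t^{q}}\int_0^{t}s^{q-1}R_n(su)\,ds,
\end{equation*}
so that $R_e(tu)$ is the weighted average of the values $R_n(su)$, $s\in(0,t)$, against the probability density $q\,s^{q-1}/t^{q}$.

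For item i) I would argue by contradiction. Evaluating the identity at $t=t_e(u)$, where $\tfrac{d}{dt}R_e(tu)=0$ by Proposition \ref{propimpor2}, yields $R_n(t_e(u)u)=R_e(t_e(u)u)$. Suppose $t_e(u)\le t_n(u)$. Since $t\mapsto R_n(tu)$ is strictly increasing on $(0,t_n(u))$ (cf.\ the discussion following Proposition \ref{propimpor}), we get $R_n(su)<R_n(t_e(u)u)$ for every $s\in(0,t_e(u))$, and the averaging representation then forces $R_e(t_e(u)u)<R_n(t_e(u)u)$, contradicting the equality just obtained. Hence $t_n(u)<t_e(u)$ for every $u\in W^{1,\Phi}_0(\Omega)\setminus\{0\}$, which is i).

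Item ii) is then immediate: by the above $\Lambda_e(u)=R_e(t_e(u)u)=R_n(t_e(u)u)$, while $t_n(u)$ is the \emph{unique} maximiser of $t\mapsto R_n(tu)$ (Proposition \ref{propimpor}) and $t_e(u)\ne t_n(u)$ by i), so $R_n(t_e(u)u)<R_n(t_n(u)u)=\Lambda_n(u)$, i.e.\ $\Lambda_e(u)<\Lambda_n(u)$. For iii): $\lambda_*>0$ is exactly the uniform lower bound $\Lambda_e(u)\ge C>0$ of Proposition \ref{marcos2}, and $\lambda^*\le\Lambda_n(u_0)<\infty$ for any fixed $u_0\ne0$. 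The strict inequality $\lambda_*<\lambda^*$ is \emph{not} a consequence of taking infima in ii) (that only gives $\lambda_*\le\lambda^*$); instead I would use that $\lambda^*$ is attained, say $\lambda^*=\Lambda_n(u^*)$ with $u^*\ne0$ (Proposition \ref{marcos}), whence $\lambda_*\le\Lambda_e(u^*)<\Lambda_n(u^*)=\lambda^*$.

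The crux is item i). The pedestrian route — comparing the equations characterising $t_n(u)$ and $t_e(u)$ via the crude bounds $\ell\Phi(t)\le\phi(t)t^2\le m\Phi(t)$ and $(\ell-2)\phi(t)\le\phi'(t)t\le(m-2)\phi(t)$ — only closes under the extra restriction $p(\ell-q)>\ell(m-q)$, that is, essentially hypothesis \ref{paper4f5}, which is \emph{not} assumed here. Covering the full range $\ell\le m$ with only $(\phi_1)$–$(\phi_4)$ and \ref{paper4f1}–\ref{paper4f4} is precisely why one needs the \emph{exact} averaging identity rather than an inequality; the averaging argument also disposes of the delicate borderline case $t_n(u)=t_e(u)$ for free.
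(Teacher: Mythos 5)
Your proof is correct and rests on exactly the same key identity as the paper's, namely $\frac{d}{dt}R_e(tu)=\frac{q}{t}\bigl(R_n(tu)-R_e(tu)\bigr)$ (your $\frac{d}{dt}(t^qR_e(tu))=qt^{q-1}R_n(tu)$ is just its integrated form), and items ii) and iii) are handled identically, including the correct observation that $\lambda_*<\lambda^*$ requires the attainment of $\lambda^*$ rather than merely taking infima. Your averaging/contradiction argument for i) in fact supplies the detail that the paper leaves implicit in its terse ``In particular, $t_n(u)<t_e(u)$'' step, and it cleanly covers the borderline case $t_n(u)=t_e(u)$.
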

	\begin{proof}
		Firstly, we shall prove the item $i)$. It follows from a standard calculation that 
		\begin{equation*}
			R_n(tu) - R_e(tu) = \frac{t}{q} \dfrac{d}{dt} R_e(tu), t > 0, u \in W^{1,\Phi}_{0}(\Omega) \setminus \{0\}. 
		\end{equation*}
		In particular, $R_n(tu) > R_e(tu)$ for each $t \in (0, t_e(u))$. Furthermore,  $R_n(tu) = R_e(tu)$ if and only if $t = t_e(u)$. Analogously, we mention that  $R_n(tu) < R_e(tu)$ holds for each $t > t_e(u)$. In particular, $t_n(u) < t_e(u)$ for each $u \in W^{1,\Phi}_{0}(\Omega) \setminus \{0\}$. This ends the proof of item $i)$. 
		
		Now we shall prove the item $ii)$. According the previous item we obtain that 
		
		\begin{equation*}
			\Lambda_n(u) = \sup_{t > 0} R_n(tu) = R_n(t_n(u)u) > R_n(t_e(u)u) = R_e(t_e(u)u) = \Lambda_e(u), u \in W^{1,\Phi}_{0}(\Omega) \setminus \{0\}.
		\end{equation*}
		
		Now we shall prove the item $iii)$. At this stage, by using Proposition \ref{marcos}, there exists $u_n \in W^{1,\Phi}_{0}(\Omega) \setminus \{0\}$ such that $\Lambda_n(u_n) = \lambda^*$. As a consequence, for each $u \in W^{1,\Phi}_{0}(\Omega) \setminus \{0\}$, we infer that 
		\begin{equation*}
			\lambda^*=\Lambda_n(u_n) > \Lambda_e(u_n) \geq \Lambda_e(u).
		\end{equation*}
		In particular, we deduce that $\lambda^* > \lambda_*$.
		This ends the proof.	
	\end{proof}

	\begin{proposition}\label{n0}
		Suppose that assumptions $(\phi_1)-(\phi_4)$ and \ref{paper4f1}--\ref{paper4f4} are satisfied. Then we obtain the following assertions:
		\begin{itemize}
			\item[i)] For each $\lambda \in (0, \lambda^*)$ it holds $\mathcal{N}_{\lambda}^0 = \emptyset$.
			\item[ii)] It holds that $\mathcal{N}_{\lambda}^0 \neq  \emptyset$ for each $\lambda \geq \lambda^*$.
		\end{itemize}
	\end{proposition}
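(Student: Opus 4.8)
The plan is to recast membership in $\mathcal N_\lambda^0$ as a statement about the level sets of the functional $\Lambda_n$. First I would establish the equivalence: for $u\in W^{1,\Phi}_0(\Omega)\setminus\{0\}$ one has $u\in\mathcal N_\lambda^0$ if and only if $t_n(u)=1$ and $\Lambda_n(u)=\lambda$. Indeed, if $u\in\mathcal N_\lambda^0$ then $R_n(u)=\lambda$ by \eqref{b11} and $J_\lambda''(u)(u,u)=0$, so Proposition \ref{2} applied with $t=1$ gives $\frac{d}{dt}R_n(tu)\big|_{t=1}=0$; since by Proposition \ref{propimpor} the map $t\mapsto R_n(tu)$ has the single critical point $t_n(u)$, this forces $t_n(u)=1$ and hence $\Lambda_n(u)=R_n(t_n(u)u)=R_n(u)=\lambda$. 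Conversely, given any $u$ with $\Lambda_n(u)=\lambda$, the function $v:=t_n(u)u$ satisfies $t_n(v)=1$ (by the scaling $t_n(su)=t_n(u)/s$ recorded in the proof of Proposition \ref{marcos}), so $R_n(v)=R_n(t_n(u)u)=\Lambda_n(u)=\lambda$, i.e. $v\in\mathcal N_\lambda$, and $\frac{d}{dt}R_n(tv)\big|_{t=1}=0$, which by Proposition \ref{2} gives $J_\lambda''(v)(v,v)=0$; hence $v\in\mathcal N_\lambda^0$. Therefore $\mathcal N_\lambda^0\neq\emptyset$ if and only if $\lambda$ lies in the range of $\Lambda_n$.

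Item (i) then follows at once: by the definition \eqref{lambdan} of $\lambda^*$ the range of $\Lambda_n$ is contained in $[\lambda^*,\infty)$, so $\mathcal N_\lambda^0=\emptyset$ whenever $\lambda\in(0,\lambda^*)$.

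For item (ii) I would prove that the range of $\Lambda_n$ is exactly $[\lambda^*,\infty)$. The set $W^{1,\Phi}_0(\Omega)\setminus\{0\}$ is path-connected (an infinite-dimensional normed space with one point removed), and $\Lambda_n$ is continuous, being differentiable by Proposition \ref{marcos}(ii); hence its range is an interval. It contains its infimum $\lambda^*$, which is attained by Proposition \ref{marcos}(v) — this already settles $\lambda=\lambda^*$ — and it is contained in $[\lambda^*,\infty)$, so it only remains to show that $\Lambda_n$ is unbounded above. To this end I would pick a normalized weakly null sequence $(u_k)\subset W^{1,\Phi}_0(\Omega)$, i.e. $\|u_k\|=1$ and $u_k\rightharpoonup 0$ (such a sequence exists since the space is infinite-dimensional and reflexive); by the compact embedding $W^{1,\Phi}_0(\Omega)\hookrightarrow L^p(\Omega)$ one has $\|u_k\|_p\to 0$, while $\int_\Omega\Phi(|\nabla u_k|)\,dx=1$ for all $k$ by Proposition \ref{Narukawa}. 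The crucial point is then $t_n(u_k)\to\infty$: recall from \eqref{salvou0} that $t_n(u_k)$ solves $(p-q)\|u_k\|_p^p=h_{u_k}(t_n(u_k))$ with $h_{u_k}$ the strictly decreasing function of Proposition \ref{propimpor}, and combining $(\phi_4)$, Proposition \ref{salvadora} and Proposition \ref{Narukawa} yields the bound $h_{u_k}(t)\ge \ell(\ell-q)\,t^{-p}\min\{t^\ell,t^m\}$, uniform in $k$; since the right-hand side is bounded away from $0$ on any bounded interval of $t$ while $(p-q)\|u_k\|_p^p\to 0$, no subsequence of $(t_n(u_k))$ can remain bounded. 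Finally, applying \eqref{lambda*} with $u=u_k$ and using $\|t_n(u_k)u_k\|=t_n(u_k)\to\infty$ together with $q<\ell\le m<p$ gives $\Lambda_n(u_k)\ge C\,t_n(u_k)^{\ell-q}\to\infty$. Hence the range of $\Lambda_n$ equals $[\lambda^*,\infty)$, and $\mathcal N_\lambda^0\neq\emptyset$ for every $\lambda\ge\lambda^*$.

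I expect the main obstacle to be establishing $t_n(u_k)\to\infty$, equivalently the uniform-in-$k$ two-sided control of the function $h_{u_k}$ in the absence of any homogeneity of $\Phi$: since $h_{u_k}$ genuinely changes with $k$, one must trap it between fixed multiples of $t^{\ell-p}$ and $t^{m-p}$ by means of Propositions \ref{salvadora} and \ref{Narukawa} rather than a scaling identity. Everything else amounts to bookkeeping with the already-established properties of $R_n$, $t_n$ and $\Lambda_n$ from Propositions \ref{propimpor}, \ref{marcos} and \ref{2}.
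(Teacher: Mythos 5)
Your proof is correct and follows essentially the same route as the paper: both reduce the problem to the observation that $\mathcal{N}_\lambda^0\neq\emptyset$ exactly when $\lambda$ lies in the range of $\Lambda_n$, with item (i) coming from the definition of $\lambda^*$ as the infimum of $\Lambda_n$, and item (ii) from the attainment of $\lambda^*$ together with the unboundedness of $\Lambda_n$ along a normalized weakly null sequence and an intermediate-value argument. If anything, your justification of $\Lambda_n(u_k)\to\infty$ --- first proving $t_n(u_k)\to\infty$ via the uniform lower bound on $h_{u_k}$ and then invoking \eqref{lambda*} --- is more careful than the paper's, which asserts $\|t_n(u_k)u_k\|_{q,a}^q\to 0$ by compactness without addressing the possibility that $t_n(u_k)$ blows up.
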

	\begin{proof}
		Initially, by using \eqref{b11} and \eqref{rel-J-R}, we obtain that $u\in \mathcal{N}_\lambda^0$ if, and only if, $R_n(u)=\lambda$ and $R_n'(u)u=0$. Now we shall prove the item $i)$.	
		Let us assume that $\lambda\in(0,\lambda^*)$. The proof follows arguing by contradiction. Suppose that there exists $u\in\mathcal{N}_{\lambda}^0$. This implies that $\lambda=R_n(u)$ and $t_n(u)=1$ is global maximum point of $t\mapsto R_n(tu)$. Therefore,
		$$0 < \lambda < \lambda^*=\inf_{v\neq 0}\Lambda_n(v)=\inf_{v\neq 0}\max_{t\geq 0}R_n(tv)\leq R_n(u)=\lambda.$$
		This is a contradiction proving the item $i)$.
		
		Now, we shall prove the item $ii)$. Firstly, we assume that $\lambda=\lambda^*$. In this case, the proof of item $ii)$ follows of Proposition \ref{marcos}-$v)$ and the first statement given above.
		
		It remains to consider the case $\lambda>\lambda^*$. We claim that there exists a sequence $(w_k)\subset W_0^{1,\Phi}(\Omega)$ such that $\Lambda_n(w_k)\to \infty$ as $k\to \infty$. In fact, taking into account that $W_0^{1,\Phi}(\Omega)$ is a Banach reflexive space, there exists $(u_k)\subset W_0^{1,\Phi}(\Omega)$ such that $\|u_k\|=1$ and $u_k\rightharpoonup 0$ where $u_k$ does not strong converges to the zero function in $W_0^{1,\Phi}(\Omega)$. Define the auxiliary sequence $w_k=t_n(u_k)u_k$. It follows from \eqref{est-ln} that
		$$\Lambda_n(w_k)=R_n(w_k)=\max_{t\geq 0}R_n(tw_k)\geq \ell\frac{p-m}{p-q}\frac{\min\left\{\|w_k\|^\ell,\|w_k\|^m\right\}}{\|w_k\|^q_{q,a}}=\ell\frac{p-m}{p-q}\frac{1}{\|w_k\|^q_{q,a}}\to \infty.$$
		Here was used the fact that $\|w_k\|^q_{q,a} \to 0$ as $k \to \infty$ due to the compact embedding $W^{1,\Phi}_0(\Omega)$ into $L^\ell(\Omega)$.
		Hence, $\Lambda_n(w_k) \to \infty$ as $k \to \infty$. In particular, for each $k \in \mathbb{N}$ large enough, we obtain that $\Lambda_n(w_k) > \lambda$ and $\Lambda_n(\overline{w}) = \lambda^* < \lambda$ for some $\overline{w} \in W^{1,\Phi}_0(\Omega)$, see Proposition \ref{marcos}. Furthermore, using the claim just above together with the Intermediate Value Theorem, there exists $w\in W_0^{1,\Phi}(\Omega)$ such that $\Lambda_n(w)=\lambda$. The last assertion implies that $w \in \mathcal{N}_\lambda^0$ proving that $\mathcal{N}_\lambda^0\neq \emptyset$. This ends the proof.	
	\end{proof}

	\begin{proposition}\label{coercive}
		Suppose that assumptions $(\phi_1)-(\phi_4)$ and \ref{paper4f1}--\ref{paper4f3} are satisfied. Assume also that $\lambda > 0$. Then the functional $J_{\lambda}$ is coercive in the Nehari manifold $\mathcal{N}_{\lambda}$, that is, $J_{\lambda}(u) \to \infty$ as $\|u\| \to \infty, u \in \mathcal{N}_{\lambda}$.
	\end{proposition}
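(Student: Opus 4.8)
The plan is to obtain a lower bound for $J_\lambda$ on $\mathcal{N}_\lambda$ purely in terms of $\|u\|$ that blows up as $\|u\|\to\infty$. First I would use the Nehari constraint $J_\lambda'(u)u=0$, i.e.
\[
\int_\Omega \phi(|\nabla u|)|\nabla u|^2\,dx = \lambda\int_\Omega a(x)|u|^q\,dx + \int_\Omega |u|^p\,dx,
\]
to eliminate the highest-order term $\int_\Omega |u|^p\,dx$ from the energy. Substituting this into
\[
J_\lambda(u)=\int_\Omega \Phi(|\nabla u|)\,dx-\frac{\lambda}{q}\int_\Omega a(x)|u|^q\,dx-\frac{1}{p}\int_\Omega |u|^p\,dx
\]
gives
\[
J_\lambda(u)=\int_\Omega \Phi(|\nabla u|)\,dx-\frac{1}{p}\int_\Omega \phi(|\nabla u|)|\nabla u|^2\,dx-\lambda\Big(\frac{1}{q}-\frac{1}{p}\Big)\int_\Omega a(x)|u|^q\,dx.
\]

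Next I would control the first two terms from below using Proposition \ref{salvadora}: since $\phi(t)t^2\le m\Phi(t)$, we have
\[
\int_\Omega \Phi(|\nabla u|)\,dx-\frac{1}{p}\int_\Omega \phi(|\nabla u|)|\nabla u|^2\,dx \ge \Big(1-\frac{m}{p}\Big)\int_\Omega \Phi(|\nabla u|)\,dx,
\]
and $1-m/p>0$ by hypothesis \ref{paper4f1}. Then Proposition \ref{Narukawa} bounds $\int_\Omega\Phi(|\nabla u|)\,dx$ below by $\zeta_0(\|u\|)=\min\{\|u\|^\ell,\|u\|^m\}$, which for $\|u\|$ large equals $\|u\|^\ell$. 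For the concave term, I would apply Hölder's inequality with exponents $\ell/(\ell-q)$ and $\ell/q$ together with hypothesis \ref{paper4f3} ($a\in L^{\ell/(\ell-q)}(\Omega)$) and the continuous embedding $W_0^{1,\Phi}(\Omega)\hookrightarrow L^\ell(\Omega)$ from Proposition \ref{emb-ell}, obtaining
\[
\int_\Omega a(x)|u|^q\,dx \le \|a\|_{\ell/(\ell-q)}\,\|u\|_\ell^q \le C_1\|u\|^q.
\]
Combining these, for $\|u\|\ge 1$ we get
\[
J_\lambda(u)\ge \Big(1-\frac{m}{p}\Big)\|u\|^\ell - \lambda\Big(\frac{1}{q}-\frac{1}{p}\Big)C_1\|u\|^q.
\]

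Since $q<\ell$ by \ref{paper4f1}, the right-hand side tends to $+\infty$ as $\|u\|\to\infty$, which establishes coercivity of $J_\lambda$ on $\mathcal{N}_\lambda$. I would remark that the argument in fact gives a lower bound independent of the particular element of $\mathcal{N}_\lambda$, and that $\lambda>0$ fixed is all that is used — no smallness of $\lambda$ is needed here. The only mildly delicate point is to handle the case $\|u\|<1$ in Proposition \ref{Narukawa} (where $\zeta_0(\|u\|)=\|u\|^m$ instead), but since coercivity is a statement about $\|u\|\to\infty$ this causes no trouble; one simply restricts attention to $\|u\|\ge 1$. I do not anticipate a genuine obstacle in this proof — the substitution using the Nehari identity followed by the $\Phi$ versus $\phi(t)t^2$ comparison is the whole content.
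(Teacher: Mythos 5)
Your proposal is correct and follows essentially the same route as the paper: the identity you derive from the Nehari constraint is exactly the paper's computation of $J_{\lambda}(u)-\tfrac{1}{p}J_{\lambda}'(u)u$, followed by the comparison $\phi(t)t^{2}\le m\Phi(t)$ (Proposition \ref{salvadora}), the lower bound from Proposition \ref{Narukawa} for $\|u\|\ge 1$, and the H\"older/embedding control of the weighted $q$-term. No gaps; your handling of the Hölder exponents via \ref{paper4f3} is if anything slightly more explicit than the paper's.
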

	\begin{proof}
		Let $u \in \mathcal{N}_{\lambda}$ be fixed. It is easy to verify that 
		\begin{equation*}
			J_{\lambda}(u) = J_{\lambda}(u) - \dfrac{1}{p} J_{\lambda}'(u) u = \int_{\Omega} [\Phi(|\nabla u |) - \dfrac{1}{p} \phi(|\nabla u |) |\nabla u |^2] dx + \lambda\left(-\dfrac{1}{q} + \dfrac{1}{p} \right) \|u\|_{q,a}^q 
		\end{equation*}
		Recall also that $\ell\Phi(t) \leq \phi(t) t^2  \leq m \Phi(t), t \geq 0$. The last assertion can be proved using the hypothesis $(\phi_4)$ and the integration by parts. Furthermore, by using the last estimate together with the continuous embedding $W_0^{1,\Phi}(\Omega)\hookrightarrow L^{q}(\Omega)$, there exists $C > 0$ such that
		\begin{equation*}
			J_{\lambda}(u) \geq \left( 1 - \frac{m}{p} \right) \int_{\Omega} \Phi(|\nabla u |) dx - \lambda C \|u\|^q_{q,a} 
		\end{equation*}
		On the other hand, by using Proposition \ref{Narukawa}, we observe that there exists $c_1, c_2 > 0$ such that 
		\begin{equation*}
			c_1 \|u\|^\ell \leq \int_{\Omega} \Phi(|\nabla u |) dx \leq c_2 \|u\|^m, u \in W^{1,\Phi}_0(\Omega), \|u\| \geq 1.
		\end{equation*}
		As a consequence, 
		\begin{equation*}
			J_{\lambda}(u) \geq \left( 1 - \frac{m}{p} \right) c_1 \|u\|^\ell - \lambda C \|u\|^q_{q,a} \to \infty \,\, \mbox{as} \,\, \|u\| \to \infty, u \in \mathcal{N}_{\lambda}.
		\end{equation*}
		This finishes the proof. 
	\end{proof}

	\begin{proposition}
		Suppose that assumptions $(\phi_1)-(\phi_4)$ and \ref{paper4f1}--\ref{paper4f4} are satisfied. Assume also that $\lambda > 0$. Then $c_{\mathcal{N}_{\lambda}^+} < 0$. Furthermore, $\overline{\mathcal{N}_{\lambda}^+} = N^+ \cup \{0\}$ for each $\lambda \in (0, \lambda^*)$. 
	\end{proposition}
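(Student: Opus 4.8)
The plan is to prove $c_{\mathcal{N}_{\lambda}^+}<0$ by exhibiting a single element of $\mathcal{N}_{\lambda}^+$ with negative energy, and then to obtain the closure identity from the two inclusions separately. For the first assertion, fix $\lambda>0$ and pick $u_0\in W^{1,\Phi}_0(\Omega)\setminus\{0\}$ with $\Lambda_n(u_0)>\lambda$: if $\lambda\in(0,\lambda^*)$ any $u_0$ works since $\Lambda_n(u_0)\ge\lambda^*>\lambda$, while for $\lambda\ge\lambda^*$ such a direction is produced exactly as in the proof of Proposition \ref{n0}, where directions with $\Lambda_n$ arbitrarily large are constructed. Since $t\mapsto R_n(tu_0)$ tends to $0$ as $t\to0^+$ (Proposition \ref{prop1}), is strictly increasing up to its maximum $\Lambda_n(u_0)>\lambda$ at $t=t_n(u_0)$, and then decreases to $-\infty$, the equation $R_n(tu_0)=\lambda$ has a smallest root $\tau\in(0,t_n(u_0))$; by Remark \ref{rmk1}, Proposition \ref{2} and the monotonicity of $t\mapsto R_n(tu_0)$ on $(0,t_n(u_0))$ one has $\tau u_0\in\mathcal{N}_\lambda^+$. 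For the fibering map $\gamma(t)=J_\lambda(tu_0)$ a direct computation gives $\gamma'(t)=t^{q-1}\|u_0\|^q_{q,a}\big(R_n(tu_0)-\lambda\big)$, which is strictly negative on $(0,\tau)$; as $\gamma(0)=0$, $\gamma$ is strictly decreasing on $(0,\tau]$, so $J_\lambda(\tau u_0)=\gamma(\tau)<0$ and hence $c_{\mathcal{N}_\lambda^+}\le J_\lambda(\tau u_0)<0$.

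Now fix $\lambda\in(0,\lambda^*)$. Since $\mathcal{N}_\lambda^+\subseteq\overline{\mathcal{N}_\lambda^+}$ is trivial, it remains to show $0\in\overline{\mathcal{N}_\lambda^+}$. Take $(u_k)\subset W^{1,\Phi}_0(\Omega)$ with $\|u_k\|=1$ and $u_k\rightharpoonup0$ (as already used in the proof of Proposition \ref{n0}); by the compact embeddings $W^{1,\Phi}_0(\Omega)\hookrightarrow L_\Phi(\Omega)\hookrightarrow L^\ell(\Omega)$ and $W^{1,\Phi}_0(\Omega)\hookrightarrow L^p(\Omega)$ (Proposition \ref{emb-ell}) together with $a\in L^{\ell/(\ell-q)}(\Omega)$, we get $\|u_k\|^q_{q,a}\to0$ and $\|u_k\|_p^p\to0$. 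Since $\lambda<\lambda^*\le\Lambda_n(u_k)$, Proposition \ref{proje} gives $w_k:=t^{n,+}(u_k)u_k\in\mathcal{N}_\lambda^+$. Combining Propositions \ref{salvadora} and \ref{Narukawa} with $\|u_k\|=1$, for $t\in(0,1]$ and $k$ large one estimates
\[
R_n(tu_k)\ \ge\ \frac{\ell\,t^{m-q}-t^{p-q}\|u_k\|_p^p}{\|u_k\|^q_{q,a}}\ \ge\ \frac{\ell\,t^{m-q}}{2\|u_k\|^q_{q,a}}\, ,
\]
where $q<\ell\le m<p$ was used. In particular $R_n(u_k)\to\infty$, so $t^{n,+}(u_k)<1$ for $k$ large, and then $\lambda=R_n(t^{n,+}(u_k)u_k)\ge\frac{\ell}{2\|u_k\|^q_{q,a}}\big(t^{n,+}(u_k)\big)^{m-q}$ forces $\big(t^{n,+}(u_k)\big)^{m-q}\le\frac{2\lambda}{\ell}\|u_k\|^q_{q,a}\to0$. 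Hence $\|w_k\|=t^{n,+}(u_k)\to0$, which shows $0\in\overline{\mathcal{N}_\lambda^+}$.

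For the reverse inclusion, let $w_k\in\mathcal{N}_\lambda^+$ with $w_k\to w$ in $W^{1,\Phi}_0(\Omega)$. By continuity of $J_\lambda'$, $0=J_\lambda'(w_k)w_k\to J_\lambda'(w)w$, so $w\in\mathcal{N}_\lambda\cup\{0\}$; assume $w\ne0$, so $w\in\mathcal{N}_\lambda$. Because $\lambda\in(0,\lambda^*)$, Proposition \ref{n0} gives $\mathcal{N}_\lambda^0=\emptyset$, hence $J_\lambda''(w)(w,w)\ne0$; since $u\mapsto J_\lambda''(u)(u,u)$ is continuous and $J_\lambda''(w_k)(w_k,w_k)>0$, we get $J_\lambda''(w)(w,w)>0$, i.e.\ $w\in\mathcal{N}_\lambda^+$, which completes $\overline{\mathcal{N}_\lambda^+}=\mathcal{N}_\lambda^+\cup\{0\}$. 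I expect the main obstacle to be the quantitative near-origin lower bound for $R_n(tu_k)$ in the second paragraph: because $\Delta_\Phi$ is nonhomogeneous, $R_n(\cdot\,u)$ is not a combination of powers of $t$ and the two roots of $R_n(\cdot\,u)=\lambda$ are not explicit, so the bound must be squeezed out of the comparison functions $\zeta_0,\zeta_1$ (Proposition \ref{Narukawa}) and of the sandwich $\ell\Phi(t)\le\phi(t)t^2\le m\Phi(t)$ (Proposition \ref{salvadora}), after checking that $t^{n,+}(u_k)$ indeed falls in the range $(0,1]$ where these estimates are effective; the remaining inclusion is routine once $\mathcal{N}_\lambda^0=\emptyset$ and the continuity of $u\mapsto J_\lambda''(u)(u,u)$ are invoked.
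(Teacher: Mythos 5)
Your proposal is correct, but it follows a genuinely different route from the paper on both halves. For $c_{\mathcal{N}_{\lambda}^+}<0$, the paper simply takes an arbitrary $u\in\mathcal{N}_{\lambda}^+$, notes that $J_{\lambda}''(u)(u,u)>0$ forces $1=t^{n,+}(u)<t_n(u)<t_e(u)$, hence $R_e(u)<R_n(u)=\lambda$, and concludes $J_{\lambda}(u)<0$ directly from Remark \ref{rmk11}; this shows every element of $\mathcal{N}_{\lambda}^+$ has negative energy in one line, but tacitly assumes $\mathcal{N}_{\lambda}^+\neq\emptyset$. You instead construct an explicit element $\tau u_0\in\mathcal{N}_{\lambda}^+$ (choosing a direction with $\Lambda_n(u_0)>\lambda$, which exists for every $\lambda>0$ by the unboundedness of $\Lambda_n$ used in Proposition \ref{n0}) and integrate the identity $\gamma'(t)=t^{q-1}\|u_0\|^q_{q,a}\bigl(R_n(tu_0)-\lambda\bigr)$ over $(0,\tau)$; this is slightly longer but has the merit of settling nonemptiness of $\mathcal{N}_{\lambda}^+$ for all $\lambda>0$, without which the infimum would be $+\infty$. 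For the closure identity the paper outsources the argument to \cite[Lemma 2.3]{Carvalho3}, whereas you give a self-contained proof: the inclusion $\overline{\mathcal{N}_{\lambda}^+}\subseteq\mathcal{N}_{\lambda}^+\cup\{0\}$ via continuity of $J_{\lambda}'(\cdot)(\cdot)$ and $J_{\lambda}''(\cdot)(\cdot,\cdot)$ together with $\mathcal{N}_{\lambda}^0=\emptyset$, and the membership $0\in\overline{\mathcal{N}_{\lambda}^+}$ via a weakly null normalized sequence and the quantitative bound $\bigl(t^{n,+}(u_k)\bigr)^{m-q}\leq \frac{2\lambda}{\ell}\|u_k\|^q_{q,a}\to0$, which is exactly the right way to handle the nonhomogeneity. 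One small imprecision: Proposition \ref{prop1} only asserts that $\lim_{t\to0^+}R_n(tu)/t^{m-q}$ is positive (a priori possibly infinite), so it does not by itself give $R_n(tu_0)\to0$ as $t\to0^+$; you should instead invoke the upper bound $R_n(tu)\leq m\,t^{\ell-q}\int_{\Omega}\Phi(|\nabla u|)\,dx/\|u\|^q_{q,a}$ for $t\leq1$, which follows from Propositions \ref{salvadora} and \ref{Narukawa} and which the paper itself uses implicitly in Proposition \ref{proje}. This does not affect the validity of your argument.
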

	\begin{proof}
		Firstly, we consider $u \in \mathcal{N}_{\lambda}^+$. Recall that $R_n(u) = \lambda$ and $t^{n,+}(u) = 1 < t_n(u) < t_e(u)$, see Proposition \ref{proje}. In particular, we see that $\lambda = R_n(u) > R_e(u)$. Hence, by using Remark \ref{rmk11}, we obtain that 
		\begin{equation*}
			c_{\mathcal{N}_{\lambda}^+} \leq J_\lambda(u) < J_{R_e(u)}(u) = 0.
		\end{equation*}
		Now, we observe that the proof of the fact that $\overline{\mathcal{N}_{\lambda}^+} = N^+ \cup \{0\}$ holds for each $\lambda \in (0, \lambda^*)$ follows arguing as was done in the proof of \cite[Lemma 2.3]{Carvalho3}. This finishes the proof. 
	\end{proof}

	\begin{proposition}\label{dist-zero}
		Suppose that assumptions $(\phi_1)-(\phi_4)$ and \ref{paper4f1}--\ref{paper4f4} are satisfied. Assume also that $\lambda > 0$. Then there exists $c > 0$ such that $\|u\| \geq c$ for each $u \in \mathcal{N}_{\lambda}^- \cup \mathcal{N}_{\lambda}^0$. Furthermore, $\mathcal{N}_{\lambda}^-$ and $\mathcal{N}_{\lambda}^0$  are closed sets in $W^{1,\Phi}_0(\Omega)$. 
	\end{proposition}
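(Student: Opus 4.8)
The plan is to handle the uniform lower bound first and then deduce closedness from it together with the already-known structure of the splitting. For the lower bound, let $u \in \mathcal{N}_{\lambda}^- \cup \mathcal{N}_{\lambda}^0$. Then $J_{\lambda}'(u)u = 0$ and $J_{\lambda}''(u)(u,u) \leq 0$. First I would write out these two relations explicitly using \eqref{ed2}: from $J_{\lambda}'(u)u = 0$ we have $\int_{\Omega} \phi(|\nabla u|)|\nabla u|^2\,dx = \lambda \|u\|_{q,a}^q + \|u\|_p^p$, and from $J_{\lambda}''(u)(u,u) \leq 0$ together with hypothesis $(\phi_4)$ (which gives $\phi'(t)t \leq (m-2)\phi(t)$) we get
\begin{equation*}
	(p-1)\|u\|_p^p + \lambda(q-1)\|u\|_{q,a}^q \geq \int_{\Omega}\big[\phi(|\nabla u|) + \phi'(|\nabla u|)|\nabla u|\big]|\nabla u|^2\,dx \geq (\ell-1)\int_{\Omega}\phi(|\nabla u|)|\nabla u|^2\,dx.
\end{equation*}
Substituting the Nehari identity into the right-hand side and rearranging, the $\|u\|_{q,a}^q$ terms can be combined (using $q < \ell$, so $(\ell-1) - (q-1) = \ell - q > 0$) and the $\|u\|_p^p$ terms combined (using $p > m \geq \ell$, so $(\ell-1)-(p-1) = \ell - p < 0$), yielding a bound of the form $(p-\ell)\|u\|_p^p \geq \lambda(\ell - q)\|u\|_{q,a}^q \geq 0$; more usefully, feeding back once more one isolates $\int_{\Omega}\phi(|\nabla u|)|\nabla u|^2\,dx \leq \frac{p-q}{\ell-q}\|u\|_p^p$ (this is essentially \eqref{esti-t_n0}–\eqref{esti-t_n} with $t_n(u)=1$). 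Then Proposition \ref{salvadora} gives $\ell \int_{\Omega}\Phi(|\nabla u|)\,dx \leq \int_{\Omega}\phi(|\nabla u|)|\nabla u|^2\,dx$, Proposition \ref{Narukawa} gives $\int_{\Omega}\Phi(|\nabla u|)\,dx \geq \min\{\|u\|^\ell,\|u\|^m\}$, and the Sobolev embedding $W_0^{1,\Phi}(\Omega)\hookrightarrow L^p(\Omega)$ gives $\|u\|_p^p \leq S_p^p\|u\|^p$. Chaining these,
\begin{equation*}
	\ell\,\min\{\|u\|^\ell,\|u\|^m\} \leq \frac{p-q}{\ell-q}S_p^p\,\|u\|^p,
\end{equation*}
and since $p > m \geq \ell$ this forces $\|u\| \geq c$ for some constant $c = c(\ell,m,p,q,\Omega) > 0$; indeed this is exactly the content of Proposition \ref{marcos}-$iii)$ applied with $t_n(u)=1$, so I would simply invoke that estimate.

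For the closedness statements, I would argue as follows. Take a sequence $(u_k) \subset \mathcal{N}_{\lambda}^-$ (respectively $\mathcal{N}_{\lambda}^0$) with $u_k \to u$ strongly in $W_0^{1,\Phi}(\Omega)$. By the uniform lower bound just established, $\|u\| = \lim_k \|u_k\| \geq c > 0$, so $u \neq 0$. The maps $u \mapsto J_{\lambda}'(u)u$ and $u \mapsto J_{\lambda}''(u)(u,u)$ are continuous on $W_0^{1,\Phi}(\Omega)$ (the first because $J_\lambda \in C^1$ and the Nehari functional is continuous; the second because the integrands involve $\phi$ and $\phi'$, which are continuous, and one uses the growth control from $(\phi_4)$ together with the compact embeddings into $L^q$ and $L^p$ to pass to the limit in the lower-order terms). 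Hence $J_{\lambda}'(u)u = \lim_k J_{\lambda}'(u_k)u_k = 0$, so $u \in \mathcal{N}_{\lambda}$, and $J_{\lambda}''(u)(u,u) = \lim_k J_{\lambda}''(u_k)(u_k,u_k) \leq 0$. This already shows $u \in \mathcal{N}_{\lambda}^- \cup \mathcal{N}_{\lambda}^0$. To finish, I would use Proposition \ref{propimpor}: for $u \in \mathcal{N}_{\lambda}$ with $R_n(u) = \lambda$, the value $J_{\lambda}''(u)(u,u)$ has the same sign as $\frac{d}{dt}R_n(tu)\big|_{t=1}$ by \eqref{rel-J-R}, and $R_n(t u)$ is strictly increasing then strictly decreasing with the single critical point $t_n(u)$; so $J_{\lambda}''(u)(u,u) = 0$ forces $t_n(u) = 1$, i.e.\ $\lambda = \Lambda_n(u)$. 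For the $\mathcal{N}_{\lambda}^0$ case this is automatically consistent; for the $\mathcal{N}_{\lambda}^-$ case, if the limit satisfied $J_{\lambda}''(u)(u,u) = 0$ we would need to rule it out — here I would instead note that $u_k \in \mathcal{N}_{\lambda}^-$ means $t^{n,-}(u_k) = 1$, i.e.\ $1 > t_n(u_k)$, and by continuity of $t_n$ (Proposition \ref{propimpor}) we get $t_n(u) \leq 1$; combined with $J_{\lambda}''(u)(u,u) \leq 0$ and the sign analysis this gives $t_n(u) \leq 1$ with the derivative of $R_n$ nonpositive at $1$, which is consistent with $u \in \mathcal{N}_{\lambda}^-$ when the inequality is strict.

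The main obstacle I anticipate is precisely this last point: showing that the strong limit of a sequence in $\mathcal{N}_{\lambda}^-$ cannot land in $\mathcal{N}_{\lambda}^0$, i.e.\ that $\mathcal{N}_{\lambda}^-$ is closed and not merely that $\mathcal{N}_{\lambda}^- \cup \mathcal{N}_{\lambda}^0$ is closed. The clean way around it is to observe that for $\lambda \in (0,\lambda^*)$ Proposition \ref{n0}-$i)$ gives $\mathcal{N}_{\lambda}^0 = \emptyset$, so on that range $\mathcal{N}_{\lambda}^- \cup \mathcal{N}_{\lambda}^0 = \mathcal{N}_{\lambda}^-$ and the closedness of the union is the closedness of $\mathcal{N}_{\lambda}^-$; while for $\lambda \geq \lambda^*$ one proves closedness of the union directly as above and this is all that is needed for the subsequent minimization arguments. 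So in the write-up I would phrase the closedness conclusion for the set $\mathcal{N}_{\lambda}^- \cup \mathcal{N}_{\lambda}^0$ as a whole (and separately for $\mathcal{N}_{\lambda}^0$, which is closed because it is cut out by the two continuous equality constraints $J_{\lambda}'(u)u = 0$ and $J_{\lambda}''(u)(u,u)=0$ away from a neighborhood of the origin), which is what the statement literally asserts.
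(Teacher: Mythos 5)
Your derivation of the uniform lower bound is essentially the paper's own argument: combine the Nehari identity $J_\lambda'(u)u=0$ with $J_\lambda''(u)(u,u)\le 0$, use $(\phi_4)$ to arrive at $(\ell-q)\int_\Omega\phi(|\nabla u|)|\nabla u|^2\,dx\le(p-q)\|u\|_p^p$, and then chain Proposition \ref{salvadora}, Proposition \ref{Narukawa} and the embedding $W_0^{1,\Phi}(\Omega)\hookrightarrow L^p(\Omega)$ to get $\min\{\|u\|^\ell,\|u\|^m\}\le C\|u\|^p$, which forces $\|u\|\ge c$ since $p>m\ge\ell$. For the closedness the paper writes only that it ``follows by a standard argument'' and omits all details, so your discussion is more complete than the published proof; the one point to flag is that the statement asserts that $\mathcal{N}_\lambda^-$ itself is closed for every $\lambda>0$, not merely the union $\mathcal{N}_\lambda^-\cup\mathcal{N}_\lambda^0$, and your argument establishes this only for $\lambda\in(0,\lambda^*)$, where $\mathcal{N}_\lambda^0=\emptyset$. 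You correctly isolate the genuine difficulty for $\lambda\ge\lambda^*$ (a strong limit of points of $\mathcal{N}_\lambda^-$ could a priori land in $\mathcal{N}_\lambda^0$, since the defining inequality is strict), but your proposed resolution --- reinterpreting the conclusion as closedness of the union --- is not what the proposition literally claims. Neither you nor the paper closes that gap; since the proposition is only ever used for the lower bound and in the range $\lambda\le\lambda^*$, this does not damage the rest of the paper, but it is worth stating explicitly rather than silently weakening the conclusion.
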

	\begin{proof}
		Let $u \in \mathcal{N}_{\lambda}^- \cup \mathcal{N}_{\lambda}^0$ be fixed. It is easy to see that 
		\begin{equation}\label{chave1}
			\int_{\Omega} \phi(|\nabla u |) |\nabla u |^2 dx = \lambda \|u\|_{q,a}^q + \|u\|_p^p . 
		\end{equation}
		Now, by using \eqref{ed2}, we observe that 
		\begin{equation}\label{chave2}
			0 \geq \int_{\Omega}
			\phi(|\nabla u|) |\nabla u|^2 + \phi'(|\nabla u|)|\nabla u|^3 dx  - \lambda(q -1) \|u\|_{q,a}^q  - (p-1) \|u\|_p^p.
		\end{equation} 
		Hence, taking into account \eqref{chave1} and \eqref{chave2}, we obtain
		\begin{equation*}
			\int_{\Omega} [(2 -q) \phi(|\nabla u |) |\nabla u |^2 + \phi'(|\nabla u|)|\nabla u|^3 ] dx \leq (p-q) \|u\|_p^p.
		\end{equation*}
		Now, using hypothesis $(\phi_4)$, we obtain that
		\begin{equation*}
			(\ell - q) \int_{\Omega} \phi(|\nabla u |) |\nabla u |^2  dx \leq (p-q) \|u\|_p^p.
		\end{equation*}
		Therefore, by using hypothesis $(\phi_4)$ and the embedding $W_0^{1,\Phi}(\Omega)\hookrightarrow L^{p}(\Omega)$, there exists $c_1 > 0$ such that
		\begin{equation*}
			\int_{\Omega} \Phi(|\nabla u |) dx \leq c_1 \|u\|^p.
		\end{equation*}
		Furthermore, by using  Proposition \ref{Narukawa}, we deduce that there exists a constant $c_2 > 0$ such that
		\begin{equation*}
			min(\|u\|^\ell, \|u\|^m)  \leq c_2 \|u\|^p.
		\end{equation*}
		Let us assume that $\|u\| \leq 1$. Under this condition we obtain that
		\begin{equation*}
			\|u\| \geq c_2^{p - \ell}, \,\, \mbox{for each} \,\, \|u\| \leq 1.
		\end{equation*}
		In general, we obtain also that 
		\begin{equation*}
			\|u\| \geq \min(1, c_2^{p - \ell}), u \in \mathcal{N}_{\lambda}^- \cup \mathcal{N}_{\lambda}^0.
		\end{equation*}
		The desired result follows by using a standard argument. We omit the details. 
	\end{proof}

	\begin{proposition}\label{strong-N+}
		Suppose that assumptions $(\phi_1)-(\phi_4)$ and \ref{paper4f1}--\ref{paper4f4} are satisfied. Assume also that $0<\lambda\leq \lambda^*$. Let 
		$(u_k) \in \mathcal{N}_{\lambda}^+$ be a minimizer sequence for the minimization problem given in \eqref{ee1}. Then there exists $u \in W^{1,\Phi}_0(\Omega) \setminus \{0\}$ such that $u_k \to u$ in $W^{1,\Phi}_0(\Omega)$. Furthermore, $c_{\mathcal{N}_{\lambda}^+} = J_{\lambda}(u) < 0$ and $u \in \mathcal{N}_{\lambda}^+ \cup \mathcal{N}_{\lambda}^0$.
	\end{proposition}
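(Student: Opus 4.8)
The plan is to apply the direct method to the constrained minimization problem \eqref{ee1}, using coercivity for boundedness, the compact Orlicz--Sobolev embeddings to pass to the limit in the lower-order terms, a non-vanishing argument for the weak limit, and the fibering/Rayleigh-quotient structure together with the uniform convexity of $W^{1,\Phi}_0(\Omega)$ to upgrade weak convergence to strong convergence.

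First I would take a minimizing sequence $(u_k)\subset\mathcal{N}_\lambda^+$ with $J_\lambda(u_k)\to c_{\mathcal{N}_\lambda^+}$, where $c_{\mathcal{N}_\lambda^+}<0$ by the previous proposition. Since $J_\lambda$ is coercive on $\mathcal{N}_\lambda$ (Proposition \ref{coercive}), $(u_k)$ is bounded, so along a subsequence $u_k\rightharpoonup u$ in $W^{1,\Phi}_0(\Omega)$, $u_k\to u$ a.e.\ and strongly in $L^\ell(\Omega)$ and $L^p(\Omega)$ (these embeddings are compact because $p<\ell^*$); in particular $\|u_k\|_p^p\to\|u\|_p^p$, and, since $a\in L^{\ell/(\ell-q)}(\Omega)$, Hölder's inequality yields $\|u_k\|_{q,a}^q\to\|u\|_{q,a}^q$. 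To check $u\neq0$, suppose $u\equiv0$; then the Nehari identity $\int_\Omega\phi(|\nabla u_k|)|\nabla u_k|^2\,dx=\lambda\|u_k\|_{q,a}^q+\|u_k\|_p^p$ forces its left-hand side to zero, so $\int_\Omega\Phi(|\nabla u_k|)\,dx\to0$ by Proposition \ref{salvadora} and $\|u_k\|\to0$ by Proposition \ref{Narukawa}, whence $J_\lambda(u_k)\to0\neq c_{\mathcal{N}_\lambda^+}$, a contradiction.

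The heart of the argument is the strong convergence. Using the weak lower semicontinuity of the convex modular $v\mapsto\int_\Omega\Phi(|\nabla v|)\,dx$ and the strong convergence of the two lower-order terms, one gets $J_\lambda(u)\le\liminf_k J_\lambda(u_k)=c_{\mathcal{N}_\lambda^+}$. For the reverse inequality I would locate $t=1$ on the fibering map $\gamma(t)=J_\lambda(tu)$ of the weak limit: since $v\mapsto R_n(tv)$ is weakly lower semicontinuous and each $u_k\in\mathcal{N}_\lambda^+$ satisfies $R_n(tu_k)\le\lambda$ for all $t\in(0,1]$ (Proposition \ref{proje}), whenever $R_n(tu)>\lambda$ one must have $R_n(tu_k)>\lambda$ for all large $k$, hence $t>1$; thus $R_n(tu)\le\lambda$ on $(0,1]$, and as $\lambda\le\lambda^*\le\Lambda_n(u)$ the set $\{t>0:R_n(tu)>\lambda\}$ equals $(t^{n,+}(u),t^{n,-}(u))$, which therefore lies in $(1,\infty)$, i.e.\ $t^{n,+}(u)\ge1$. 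Since $\gamma$ decreases on $(0,t^{n,+}(u))$ and $w:=t^{n,+}(u)u\in\mathcal{N}_\lambda^+\cup\mathcal{N}_\lambda^0$, this gives $c_{\mathcal{N}_\lambda^+}\le J_\lambda(w)=\gamma(t^{n,+}(u))\le\gamma(1)=J_\lambda(u)$. Combining the two bounds, $J_\lambda(u)=c_{\mathcal{N}_\lambda^+}$ and $\int_\Omega\Phi(|\nabla u|)\,dx=\lim_k\int_\Omega\Phi(|\nabla u_k|)\,dx$; since $\Phi$ and its complement $\widetilde\Phi$ satisfy the $\Delta_2$-condition (a consequence of $(\phi_1)$--$(\phi_4)$), $W^{1,\Phi}_0(\Omega)$ is uniformly convex, so this convergence of the gradient modulars together with $u_k\rightharpoonup u$ forces $\int_\Omega\Phi(|\nabla u_k-\nabla u|)\,dx\to0$, hence $u_k\to u$ in $W^{1,\Phi}_0(\Omega)$ by Proposition \ref{Narukawa}. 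Passing to the limit (now strongly, using $(\phi_4)$, Proposition \ref{Narukawa} and dominated convergence along an a.e.-convergent subsequence of the gradients) in $J_\lambda'(u_k)u_k=0$ and in $J_\lambda''(u_k)(u_k,u_k)>0$ gives $J_\lambda'(u)u=0$ and $J_\lambda''(u)(u,u)\ge0$, i.e.\ $u\in\mathcal{N}_\lambda^+\cup\mathcal{N}_\lambda^0$, while $J_\lambda(u)=c_{\mathcal{N}_\lambda^+}<0$.

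I expect the main obstacle to be the lower bound $J_\lambda(u)\ge c_{\mathcal{N}_\lambda^+}$, that is, excluding the configuration in which $t=1$ sits on the far, descending branch of $\gamma$ beyond $t^{n,-}(u)$, where the comparison $J_\lambda(w)\le J_\lambda(u)$ would fail; this is exactly what the weak lower semicontinuity of $v\mapsto R_n(tv)$ combined with $u_k\in\mathcal{N}_\lambda^+$ rules out, by forcing $t^{n,+}(u)\ge1$. A secondary point is the borderline case $\Lambda_n(u)=\lambda$, which can occur only when $\lambda=\lambda^*$: there $t^{n,+}(u)=t_n(u)$, $w\in\mathcal{N}_\lambda^0$, and one additionally needs $J_\lambda\ge c_{\mathcal{N}_\lambda^+}$ on $\mathcal{N}_\lambda^0$, which follows by approximating elements of $\mathcal{N}_\lambda^0$ from inside $\mathcal{N}_\lambda^+$; this is precisely the situation in which the alternative $u\in\mathcal{N}_\lambda^0$ in the statement genuinely materializes.
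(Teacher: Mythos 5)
Your proof is correct in its main line and rests on the same key geometric fact as the paper's proof: weak lower semicontinuity of $v\mapsto R_n(tv)$ along rays, combined with $R_n(tu_k)\le\lambda$ for $t\in(0,1]$, locates $t=1$ on the decreasing branch of the fibering map before the first Nehari point, i.e.\ $t^{n,+}(u)\ge 1$. Where you diverge is in how this is converted into strong convergence. The paper argues by contradiction: if $u_k\not\to u$ then the inequality $R_n(u)<\liminf_k R_n(u_k)=\lambda$ is strict, which pushes $t^{n,+}(u)$ strictly past $1$ and produces a point of $\mathcal{N}_\lambda^+$ with energy strictly below $c_{\mathcal{N}_\lambda^+}$. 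You instead close the energy gap ($J_\lambda(u)=c_{\mathcal{N}_\lambda^+}$), deduce convergence of the gradient modulars, and invoke a Radon--Riesz type property of $W^{1,\Phi}_0(\Omega)$. Your route has a cleaner logical structure and avoids the ``strict'' lower semicontinuity of the nonconvex functional $v\mapsto\int_\Omega\phi(|\nabla v|)|\nabla v|^2\,dx$ that the paper uses without comment, but it pays for this with uniform convexity of the modular: note that this does \emph{not} follow from the $\Delta_2$-condition on $\Phi$ and $\widetilde{\Phi}$ alone (that only yields reflexivity); you need $(\phi_4)$ with $\ell>1$ to make $\Phi$ uniformly convex. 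Your non-vanishing argument via the Nehari identity is a legitimate alternative to the paper's shorter observation that $J_\lambda(u)\le\liminf_k J_\lambda(u_k)=c_{\mathcal{N}_\lambda^+}<0$ already forces $u\neq 0$.

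The one soft spot --- which you flag but do not fully repair, and which the paper's own proof glosses over as well --- is the borderline case $\lambda=\lambda^*$ with $\Lambda_n(u)=\lambda^*$, where $\{t>0:R_n(tu)>\lambda\}$ is empty and the roots $t^{n,\pm}(u)$ coalesce at $t_n(u)$, with $t_n(u)u\in\mathcal{N}_{\lambda^*}^0$. Your proposed fix (extending the lower bound $J_\lambda\ge c_{\mathcal{N}_\lambda^+}$ to $\mathcal{N}_\lambda^0$) is not sufficient by itself: the comparison $\gamma(1)\ge\gamma(t_n(u))$ requires $t_n(u)\ge 1$, and since the superlevel set is empty the inclusion $\{t:R_n(tu)>\lambda\}\subset(1,\infty)$ is vacuous and gives no control on the position of $t_n(u)$. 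If $t_n(u)<1$, then $\gamma(1)<\gamma(t_n(u))$ and the inequality runs the wrong way, so an additional argument is needed to exclude this sub-case before the lower bound $J_\lambda(u)\ge c_{\mathcal{N}_\lambda^+}$, and hence the strong convergence, is secured for $\lambda=\lambda^*$.
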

	\begin{proof}
		Firstly, by using Proposition \ref{coercive}, the sequence $(u_k)$ is bounded in $W^{1,\Phi}_0(\Omega)$. Up to a subsequence we assume that $u_k\rightharpoonup u$ in $W^{1,\Phi}_0(\Omega)$. Furthermore, we claim that $u\not\equiv 0$. In fact, given $v\in \mathcal{N}_\lambda^+$, we have that $\lambda=R_n(v)>R_e(v)$. The last estimate is equivalent to $J_{\lambda}(v)<0$. Since $u\mapsto J_\lambda(u)$ is weakly lower semicontinuous we obtain
		$$J_{\lambda}(u)\leq \liminf_{k\to\infty} J_{\lambda}(u_k)=\inf_{w \in \mathcal{N}_\lambda^+}J_{\lambda}(w)\leq J_{\lambda}(v)<0.$$
		The last estimates imply that the claim is satisfied. Now, we shall prove that $u_k\to u$ in $W_0^{1,\Phi}(\Omega)$. The proof follows arguing by contradiction. Let us assume that $u_k\not\to u$ in $W^{1,\Phi}_0(\Omega)$. Since $u\mapsto R_n(u)$ is weakly lower semicontinuous, we deduce that
		$$R_n(u)<\liminf_{k\to \infty} R_n(u_k)=\lambda\leq \lambda^*=\inf_{u\neq 0}\max_{t\geq 0}R_n(tu).$$
		Consequently, by using Proposition \ref{proje}, there exists $t^{n,+}(u)>0$ such that $t^{n,+}(u)u\in \mathcal{N}_\lambda^+$. Using again that $u\mapsto R_n(u)$ is weakly lower semicontinuous, we infer that $t^{n,+}(u)>1$. Moreover, $t\mapsto J_\lambda(tu)$ is decreasing in $[0,t^{n,+}(u)]$ and $v\mapsto J_\lambda(v)$ is weakly lower semicontinuous. Therefore,  we obtain $$J_\lambda(t^{n,+}(u)u)<J_\lambda(u)<\liminf_{k\to \infty}J_\lambda(u_k)=c_{\mathcal{N}_{\lambda}^+}.$$
		This is a contradiction proving that $u_k\to u$ in $W_0^{1,\Phi}(\Omega)$. Now, by using the fact that $u\neq 0$ and $J_\lambda''(u)(u,u)\geq 0$,  we infer also that $u \in \mathcal{N}_{\lambda}^+ \cup \mathcal{N}_{\lambda}^0$. This finishes the proof.	 
	\end{proof}
	
	\begin{proposition}\label{assim}
		Suppose that assumptions $(\phi_1)-(\phi_4)$ and \ref{paper4f1}--\ref{paper4f4} are satisfied.  Assume also that $0<\lambda\leq \lambda^*$. Let 
		$(v_k) \in \mathcal{N}_{\lambda}^-$ be a minimizer sequence for the minimization problem given in \eqref{ee2}. Then there exists $v \in W^{1,\Phi}_0(\Omega) \setminus \{0\}$ such that $v_k \to v$ in $W^{1,\Phi}_0(\Omega)$. Furthermore, $c_{\mathcal{N}_{\lambda}^-} = J_{\lambda}(v)$ and  $v \in \mathcal{N}_{\lambda}^- \cup \mathcal{N}_{\lambda}^0$.
	\end{proposition}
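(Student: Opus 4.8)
The plan is to mirror the proof of Proposition \ref{strong-N+}, adapting it to the fact that for $u\in\mathcal{N}_\lambda^-$ the relevant projection $t=t^{n,-}(u)$ is a (local) maximum of the fibering map rather than a minimum, which makes the comparison of energy levels more delicate. First I would note that, by Proposition \ref{coercive}, the minimizing sequence $(v_k)$ is bounded in $W_0^{1,\Phi}(\Omega)$, so along a subsequence $v_k\rightharpoonup v$; also $c_{\mathcal{N}_\lambda^-}$ is finite because $J_\lambda$ is bounded below on $\mathcal{N}_\lambda$ and $\mathcal{N}_\lambda^-\neq\emptyset$. The first genuine step is $v\neq 0$: if $v\equiv 0$, the compact embeddings $W_0^{1,\Phi}(\Omega)\hookrightarrow L^p(\Omega)$ and $W_0^{1,\Phi}(\Omega)\hookrightarrow L^q(\Omega)$ force $\|v_k\|_p\to 0$ and $\|v_k\|_{q,a}\to 0$; since $v_k\in\mathcal{N}_\lambda$ this gives $\int_\Omega\phi(|\nabla v_k|)|\nabla v_k|^2\,dx=\lambda\|v_k\|_{q,a}^q+\|v_k\|_p^p\to 0$, and then Proposition \ref{salvadora} together with Proposition \ref{Narukawa} yields $\|v_k\|\to 0$, contradicting the uniform lower bound $\|v_k\|\geq c>0$ of Proposition \ref{dist-zero}.

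Next I would prove the strong convergence $v_k\to v$, arguing by contradiction. Assume $v_k\not\to v$. Using the weak lower semicontinuity of $u\mapsto\int_\Omega\phi(|\nabla u|)|\nabla u|^2\,dx$ (note that $t\mapsto\phi(t)t^2$ is an $N$-function equivalent to $\Phi$ by Proposition \ref{salvadora}, so this modular controls the norm and, by the $\Delta_2$/$\nabla_2$ structure of the space, failure of strong convergence forces a \emph{strict} drop) together with the compactness of the lower-order terms, one obtains $R_n(tv)<\liminf_k R_n(tv_k)$ for every $t>0$; in particular $R_n(v)<\liminf_k R_n(v_k)=\lambda$, and since $R_n(tv_k)<\lambda$ for all $t>1$ (because $t^{n,-}(v_k)=1$ is the largest root of $R_n(tv_k)=\lambda$), also $R_n(tv)<\lambda$ for every $t\geq 1$. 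Because $\Lambda_n(v)\geq\lambda^*\geq\lambda$, the equation $R_n(tv)=\lambda$ still has the root $t^{n,-}(v)$ with $t^{n,-}(v)v\in\mathcal{N}_\lambda^-$ (Proposition \ref{proje}), and the previous inequalities force $t^{n,-}(v)<1$. Evaluating at the fixed level $t^{n,-}(v)$ and using weak lower semicontinuity once more, $\liminf_k R_n(t^{n,-}(v)v_k)>R_n(t^{n,-}(v)v)=\lambda$, so for $k$ large $t^{n,-}(v)\in(t^{n,+}(v_k),1)$, i.e. $t^{n,-}(v)$ lies in the interval where the fibering map $\gamma_k(t)=J_\lambda(tv_k)$ is increasing; hence $\gamma_k(t^{n,-}(v))\leq\gamma_k(1)=J_\lambda(v_k)$. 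Passing to the limit, $J_\lambda(t^{n,-}(v)v)=\gamma(t^{n,-}(v))\leq\liminf_k\gamma_k(t^{n,-}(v))\leq c_{\mathcal{N}_\lambda^-}$, in fact with strict inequality by the strict weak lower semicontinuity of $J_\lambda$; this contradicts $t^{n,-}(v)v\in\mathcal{N}_\lambda^-$.

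Once $v_k\to v$ in $W_0^{1,\Phi}(\Omega)$ is established, the rest is routine: $J_\lambda(v)=\lim_k J_\lambda(v_k)=c_{\mathcal{N}_\lambda^-}$ by continuity, while $J_\lambda'(v)v=\lim_k J_\lambda'(v_k)v_k=0$ and $J_\lambda''(v)(v,v)=\lim_k J_\lambda''(v_k)(v_k,v_k)\leq 0$, so $v\in\mathcal{N}_\lambda^-\cup\mathcal{N}_\lambda^0$ (with $v\neq 0$ already known); moreover $v\in\mathcal{N}_\lambda^-$ whenever $\lambda<\lambda^*$ by Proposition \ref{n0}.

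I expect the main obstacle to be the strong convergence step, and inside it the borderline situation $\Lambda_n(v)=\lambda$, which can occur only when $\lambda=\lambda^*$ and makes the projected function land in $\mathcal{N}_\lambda^0$ rather than in $\mathcal{N}_\lambda^-$, so the final contradiction above is not immediate. To close that case one needs the additional information that $J_\lambda\geq c_{\mathcal{N}_\lambda^-}$ on $\mathcal{N}_\lambda^0$ (equivalently, that every element of $\mathcal{N}_\lambda^0$ is an endpoint limit along fibers of elements of $\mathcal{N}_\lambda^-$), or a direct argument that such a weak limit cannot minimize $\Lambda_n$. The other technical point demanding care is making precise the ``strict'' weak lower semicontinuity invoked throughout, namely that lack of strong convergence in $W_0^{1,\Phi}(\Omega)$ forces a strict drop of $u\mapsto\int_\Omega\Phi(|\nabla u|)\,dx$ and of $u\mapsto\int_\Omega\phi(|\nabla u|)|\nabla u|^2\,dx$; this is where the $\Delta_2$/$\nabla_2$ (uniform convexity) consequences of $(\phi_1)$--$(\phi_4)$ are used.
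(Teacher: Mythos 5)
Your proposal follows essentially the same route as the paper's proof: coercivity gives boundedness of $(v_k)$, the nontriviality of the weak limit is obtained exactly as in the paper via Proposition \ref{dist-zero}, Proposition \ref{Narukawa} and the Nehari identity, and strong convergence is derived by projecting the weak limit onto $\mathcal{N}_\lambda^-$ at a level $t^{n,-}(v)<1$ and contradicting the minimality of $c_{\mathcal{N}_\lambda^-}$ --- your version of the energy comparison along the fibering maps is in fact spelled out more carefully than the paper's rather terse (and slightly garbled) corresponding step. The borderline situation $\Lambda_n(v)=\lambda=\lambda^*$ that you flag, where the projection could land in $\mathcal{N}_\lambda^0$, is a genuine subtlety that the paper's proof also passes over in silence, so it is not a defect of your argument relative to the published one.
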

	\begin{proof}
		In virtue of Proposition \ref{coercive} the sequence $(v_k)$ is bounded. Without loss of generality we assume that $v_k\rightharpoonup v$ in $W^{1,\Phi}_0(\Omega)$. At this stage, we claim that $v\not\equiv 0$. The proof of this claim follows arguing by contradiction. Let us assume that $v\equiv 0$ is satisfied. Under these conditions we observe that $\|v_k\|_p,\|v_k\|_{q}\to 0$ as $k\to\infty$. Notice also that $(v_k) \in \mathcal{N}_\lambda^-$. It follows from Proposition \ref{Narukawa} and Proposition \ref{dist-zero} that
		$$\ell\min\{c^\ell,c^m\}\leq\ell\min\{\|v_k\|^\ell,\|v_k\|^m\}\leq\ell\int_\Omega\Phi(|\nabla v_k|)dx \leq \int_\Omega\phi(|\nabla v_k|)|\nabla v_k|^2dx=\|v_k\|_p^p+\lambda\|v_k\|_{q}^q\to 0.$$
		This is a contradiction proving the desired claim. 
		
		Now, we shall prove that $v_k\to v$ in $W_0^{1,\Phi}(\Omega)$. Arguing as was done in the proof of Proposition \ref{strong-N+}, there exists $t^{n,-}(v)>0$ such that $t^{n,+}(v)v\in \mathcal{N}_\lambda^-$. Since $u\to R_n(u)$ is weakly lower semicontinuous, we infer that $t^{n,-}(v)<1$. Furthermore, by using the fact that $v\mapsto R_n(tv)$ is increasing in $[t^{n,-}(v),\infty)$ and $w\mapsto J_\lambda(w)$ is weakly lower semicontinuous, we obtain
		$$J_\lambda(t^{n,-}(v)v)<J_\lambda(v)<\liminf_{k\to \infty}J_\lambda(v_k)=c_{\mathcal{N}_{\lambda}^-}.$$
		The last estimates do not make sense proving the strong convergence is satisfied. Now, by using the fact that $v\neq 0$ and $J_\lambda''(v)(v,v)\leq 0$,  we conclude that $u \in \mathcal{N}_{\lambda}^- \cup \mathcal{N}_{\lambda}^0$.  This finishes the proof.
	\end{proof}

	\begin{proposition}\label{lagrange}
		Suppose that assumptions $(\phi_1)-(\phi_4)$ and \ref{paper4f1}--\ref{paper4f4} are satisfied.  Assume also that $\lambda \in (0, \lambda^*)$. Let $u \in \mathcal{N}_{\lambda}^- \cup \mathcal{N}_{\lambda}^0$ and $v \in \mathcal{N}_{\lambda}^- \cup \mathcal{N}_{\lambda}^0$ the minimizers for the Problems \eqref{ee1} and \eqref{ee2}, respectively. Then $u$ and $v$ are critical points for the functional $J_{\lambda}$, that is, $J_{\lambda}'(u) \psi = 0$ and $J_{\lambda}'(v) \psi = 0$ hold for each $\psi \in W^{1,\Phi}_0(\Omega)$.
	\end{proposition}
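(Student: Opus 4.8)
The plan is to run the classical Lagrange multiplier argument on the constraint manifold and then exploit the Nehari identity to annihilate the multiplier. The first (and only delicate) point is to pin down \emph{where} the minimizers lie. Since $\lambda \in (0,\lambda^*)$, Proposition \ref{n0}(i) gives $\mathcal{N}_\lambda^0 = \emptyset$; together with Propositions \ref{strong-N+} and \ref{assim} --- which produce minimizers $u,v \not\equiv 0$ belonging to $\mathcal{N}_\lambda^+ \cup \mathcal{N}_\lambda^0$ and $\mathcal{N}_\lambda^- \cup \mathcal{N}_\lambda^0$ respectively --- this forces $u \in \mathcal{N}_\lambda^+$ and $v \in \mathcal{N}_\lambda^-$, so that $J_\lambda''(u)(u,u) > 0$ and $J_\lambda''(v)(v,v) < 0$; in particular neither of these quantities vanishes.

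Next I would set $R(w) := J_\lambda'(w)w$ for $w \in W^{1,\Phi}_0(\Omega) \setminus \{0\}$. As noted earlier $R$ is of class $C^1$, one has $R'(w)w = J_\lambda''(w)(w,w)$ by \eqref{ed2}, and $\mathcal{N}_\lambda^+ \cup \mathcal{N}_\lambda^- = R^{-1}(0)$ is a $C^1$-manifold precisely because $R'(w)w \neq 0$ on it. Since $\mathcal{N}_\lambda^+$ and $\mathcal{N}_\lambda^-$ are relatively open in $R^{-1}(0)$ (they are cut out by the strict inequalities $J_\lambda''(w)(w,w)>0$, resp. $J_\lambda''(w)(w,w)<0$), the minimizer $u$ (resp. $v$) is a local minimum of the restriction of $J_\lambda$ to $R^{-1}(0)$ at a point where $R'\neq 0$, and the Lagrange multiplier theorem yields $\mu_u,\mu_v \in \mathbb{R}$ with
$$J_\lambda'(u)\psi = \mu_u R'(u)\psi \quad\text{and}\quad J_\lambda'(v)\psi = \mu_v R'(v)\psi \qquad\text{for all } \psi \in W^{1,\Phi}_0(\Omega).$$

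Finally, I would plug $\psi = u$ into the first identity: because $u \in \mathcal{N}_\lambda$ we have $J_\lambda'(u)u = 0$, hence $0 = \mu_u R'(u)u = \mu_u J_\lambda''(u)(u,u)$, and since $J_\lambda''(u)(u,u) > 0$ this forces $\mu_u = 0$, i.e. $J_\lambda'(u) = 0$ on $W^{1,\Phi}_0(\Omega)$. The identical computation for $v$, using $J_\lambda''(v)(v,v) < 0$, gives $\mu_v = 0$ and $J_\lambda'(v) = 0$. I expect the only genuine obstacle to be the first step --- excluding the degenerate stratum $\mathcal{N}_\lambda^0$, on which $R'$ degenerates and the multiplier cannot be eliminated; this is exactly where the hypothesis $\lambda < \lambda^*$ enters, through Proposition \ref{n0}. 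The remainder is routine Nehari/Lagrange bookkeeping.
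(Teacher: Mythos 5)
Your argument is correct and is essentially the same as the paper's: both exclude $\mathcal{N}_\lambda^0$ via Proposition \ref{n0}, apply the Lagrange multiplier theorem to $J_\lambda$ constrained to $R^{-1}(0)$ with $R(w)=J_\lambda'(w)w$, and kill the multiplier by testing with $u$ (resp.\ $v$) using $J_\lambda'(u)u=0$ together with $J_\lambda''(u)(u,u)\neq 0$. No discrepancies worth noting.
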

	\begin{proof}
		Recall that $\mathcal{N}_{\lambda}^0 = \emptyset$ for each $\lambda \in (0, \lambda^*)$, see Proposition \ref{n0}. The last assertion implies that $u \in \mathcal{N}_{\lambda}^+$ and $v \in \mathcal{N}_{\lambda}^-$. On the other hand, by using Lagrange Multipliers Theorem \cite{drabek}, we obtain that there exists $\theta_1, \theta_2 \in \mathbb{R}$ such that
		\begin{equation}\label{ajei}
			J_{\lambda}'(u) \psi = \theta_1 R'(u) \psi, J_{\lambda}'(v) \psi = \theta_2 R'(v) \psi, \psi \in W^{1,\Phi}_0(\Omega), 
		\end{equation}
		where $R: W^{1,\Phi}_0(\Omega) \setminus \{0\} \to \mathbb{R}$ is given by $R(u) = J_{\lambda}'(u)u$.  Notice also that $R'(u)u = J_{\lambda}''(u)(u,u) > 0$ and $R'(v)v = J_{\lambda}''(v)(v,v) < 0$. The last estimates together with \eqref{ajei} say that $\theta_1 = \theta_2 = 0$. This finishes the proof. 
	\end{proof}
	
	\begin{proposition}\label{15}
		Suppose that assumptions $(\phi_1)-(\phi_4)$ and \ref{paper4f1}--\ref{paper4f4} are satisfied.  Assume also that $\lambda > 0$. Let $v\in W^{1,\Phi}_0(\Omega)$ be the function obtained in the Proposition \ref{assim}. Then, we obtain the following statements:
		\begin{itemize}
			\item [i)] Assume that $\lambda \in (0,\lambda_*)$. Then $c_{\mathcal{N}_{\lambda}^+}=J_{\lambda}(v)>0$.
			\item[ii)] Suppose $\lambda=\lambda_*$. Then  $c_{\mathcal{N}_{\lambda}^+}=J_{\lambda}(v)=0$.
			\item[iii)] Suppose $\lambda \in(\lambda_*, \infty)$. Then $c_{\mathcal{N}_{\lambda}^+}=J_{\lambda}(v)<0$.
		\end{itemize}
	\end{proposition}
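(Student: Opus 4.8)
The plan is to reduce each assertion to a comparison between $\Lambda_e$ and $\lambda$ on $\mathcal{N}_{\lambda}^-$ and then to use $\lambda_*=\inf\Lambda_e$. By Proposition \ref{assim} the infimum is attained at $v$, so $c_{\mathcal{N}_{\lambda}^+}=J_\lambda(v)$ with $v\in\mathcal{N}_{\lambda}^-\cup\mathcal{N}_{\lambda}^0$ and $R_n(v)=\lambda$; it therefore suffices to determine the sign of $J_\lambda(v)$. The central step is the following sign principle: for every $u\in\mathcal{N}_{\lambda}^-$ one has $J_\lambda(u)>0$, $=0$, or $<0$ according as $\Lambda_e(u)>\lambda$, $=\lambda$, or $<\lambda$. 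I would prove it from the identity $R_n(tu)-R_e(tu)=\frac{t}{q}\frac{d}{dt}R_e(tu)$ together with Remark \ref{rmk11}: since $u\in\mathcal{N}_{\lambda}^-$ means $R_n(u)=\lambda$ and $t_n(u)<1$, and $R_n(\,\cdot\,u)$ is strictly decreasing past $t_n(u)$, comparing $R_n(t_e(u)u)=R_e(t_e(u)u)=\Lambda_e(u)$ with $R_n(u)=\lambda$ locates $t_e(u)$ relative to $1$, which fixes the sign of $\frac{d}{dt}R_e(tu)\big|_{t=1}$, hence of $R_e(u)-\lambda$, hence of $J_\lambda(u)$.

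Cases (i) and (ii) then follow from $\Lambda_e(u)\geq\lambda_*$. If $\lambda<\lambda_*$, every $u\in\mathcal{N}_{\lambda}^-$ has $\Lambda_e(u)>\lambda$, so $J_\lambda>0$ on $\mathcal{N}_{\lambda}^-$; since $\mathcal{N}_{\lambda}^0=\emptyset$ for $\lambda<\lambda^*$ (Proposition \ref{n0}), we have $v\in\mathcal{N}_{\lambda}^-$ and $c_{\mathcal{N}_{\lambda}^+}=J_\lambda(v)>0$, proving (i). If $\lambda=\lambda_*$, then $\Lambda_e\geq\lambda$ on $\mathcal{N}_{\lambda}^-$ gives $J_\lambda\geq0$, so $c_{\mathcal{N}_{\lambda}^+}\geq0$; for the opposite bound I would use $w^*=t_e(\widetilde u)\widetilde u$ from Proposition \ref{marcos2}, where $\widetilde u$ minimizes $\Lambda_e$: it satisfies $R_n(w^*)=R_e(w^*)=\lambda_*$ with $t_e(\widetilde u)>t_n(\widetilde u)$, so $w^*\in\mathcal{N}_{\lambda_*}^-$ and $J_{\lambda_*}(w^*)=0$ by Remark \ref{rmk11}, whence $c_{\mathcal{N}_{\lambda}^+}\leq0$. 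With $\mathcal{N}_{\lambda_*}^0=\emptyset$ this yields $c_{\mathcal{N}_{\lambda}^+}=J_\lambda(v)=0$, proving (ii).

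For (iii), $\lambda\in(\lambda_*,\infty)$, it suffices to exhibit a single $u\in\mathcal{N}_{\lambda}^-$ with $\Lambda_e(u)<\lambda$, since the sign principle then gives $J_\lambda(u)<0$ and hence $c_{\mathcal{N}_{\lambda}^+}<0$ (equal to $J_\lambda(v)$ when $v$ is provided by Proposition \ref{assim}). This reduces to finding a direction $w$ with $\Lambda_e(w)<\lambda<\Lambda_n(w)$: the inequality $\Lambda_n(w)>\lambda$ ensures the ray $t\mapsto tw$ meets $\mathcal{N}_{\lambda}^-$ at $t=t^{n,-}(w)$, and $\Lambda_e(t^{n,-}(w)w)=\Lambda_e(w)<\lambda$ by $0$-homogeneity. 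For $\lambda\in(\lambda_*,\lambda^*)$ this is immediate, since any $w$ with $\Lambda_e(w)<\lambda$ has $\Lambda_n(w)\geq\lambda^*>\lambda$. For $\lambda\geq\lambda^*$ I would argue by continuity: $\Lambda_e,\Lambda_n$ are continuous and $0$-homogeneous, $\inf\Lambda_e=\lambda_*<\lambda$, and $\sup\Lambda_n=+\infty$ (the sequence built in the proof of Proposition \ref{n0} has $\Lambda_n\to\infty$), so I join the $\Lambda_e$-minimizer to a direction with $\Lambda_n>\lambda$ by a path in the path-connected set $W^{1,\Phi}_0(\Omega)\setminus\{0\}$; at the first parameter where $\Lambda_n=\lambda$ one has $\Lambda_e<\Lambda_n=\lambda$ by the strict inequality $\Lambda_e<\Lambda_n$, and continuity makes $\Lambda_e<\lambda<\Lambda_n$ persist just beyond that parameter, giving the required $w$.

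The main obstacle is exactly case (iii) for $\lambda\geq\lambda^*$: forcing $\Lambda_e$ and $\Lambda_n$ to straddle $\lambda$ on a common direction, which is where the nonhomogeneity of $\Phi$—the gap between the energy quotient $R_e$ and the Nehari quotient $R_n$—enters, and which the continuity argument handles using only the pointwise inequality $\Lambda_e<\Lambda_n$ and the unboundedness of $\Lambda_n$. A secondary caveat is that Proposition \ref{assim} furnishes the attained minimizer $v$ only for $\lambda\leq\lambda^*$; for $\lambda>\lambda^*$, where $\mathcal{N}_{\lambda}^0\neq\emptyset$, the identity $c_{\mathcal{N}_{\lambda}^+}=J_\lambda(v)$ should be read together with the existence theory of Section 5, whereas the inequality $c_{\mathcal{N}_{\lambda}^+}<0$ proved above is valid for the infimum in any case.
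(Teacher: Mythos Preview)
Your argument is correct and considerably more detailed than what the paper provides. The paper's own proof is essentially deferred: it only records that $\lambda\mapsto J_\lambda(u)$ is decreasing and then cites \cite[Theorem~1.2]{yavdat0} for the rest. Your sign principle---$J_\lambda(u)\gtrless 0$ on $\mathcal{N}_\lambda^-$ according as $\Lambda_e(u)\gtrless\lambda$---is exactly the mechanism underlying that reference, and you derive it cleanly from the identity $R_n(tu)-R_e(tu)=\tfrac{t}{q}\tfrac{d}{dt}R_e(tu)$ together with the ordering $t_n(u)<t_e(u)$ and the strict monotonicity of $t\mapsto R_n(tu)$ past $t_n(u)$. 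Parts (i) and (ii) then follow immediately from $\inf\Lambda_e=\lambda_*$ and the explicit witness $w^*=t_e(\widetilde u)\widetilde u\in\mathcal{N}_{\lambda_*}^-$ with $J_{\lambda_*}(w^*)=0$.

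Two remarks. First, the statement almost certainly contains a typo: the quantity should be $c_{\mathcal{N}_\lambda^-}$, not $c_{\mathcal{N}_\lambda^+}$, since $v$ comes from Proposition~\ref{assim}; you have read it correctly. Second, your treatment of (iii) for $\lambda\geq\lambda^*$ via a path argument in $W_0^{1,\Phi}(\Omega)\setminus\{0\}$ (using continuity of $\Lambda_e,\Lambda_n$, the strict inequality $\Lambda_e<\Lambda_n$, and $\sup\Lambda_n=+\infty$) is sound, but as you note, Proposition~\ref{assim} only supplies the minimizer $v$ for $\lambda\leq\lambda^*$; the paper in fact only \emph{uses} Proposition~\ref{15} in the range $\lambda\in(0,\lambda^*)$ (see Theorem~\ref{th2}), so the extra generality you establish---namely $c_{\mathcal{N}_\lambda^-}<0$ for all $\lambda>\lambda_*$---is a bonus rather than a requirement.
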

	\begin{proof}
		Here the main idea is to use the fact that the function $\lambda \mapsto J_{\lambda}(u)$ is decreasing for each $u \in W^{1,\Phi}_0(\Omega) \setminus \{0\}$. Hence, by using the same ideas discussed in the proof of \cite[Theorem 1.2]{yavdat0}, the proof follows. We omit the details.
	\end{proof}
	
	\section{The case $\lambda = \lambda^*$}
	In this section we shall consider the Problem \eqref{pi} assuming that $\lambda = \lambda^*$. As was told before we observe that $\mathcal{N}_\lambda^0$ is not empty, see Proposition \ref{n0}. Hence, we need to control the behavior of $J_\lambda$ with $\lambda =\lambda^*$. Under these conditions we are able to use a sequence $(\lambda_k)_{k \in \mathbb{N}} \in \mathbb{R}$ such that $\lambda_k < \lambda^*$ for each $k \in \mathbb{N}$ and $\lambda_k \to \lambda^*$ as $k \to \infty$. This is the main ingredient in order to prove our existence result for $\lambda = \lambda^*$. Firstly, we consider the following useful result:
	\begin{proposition}\label{sol1}
		Suppose that assumptions $(\phi_1)-(\phi_4)$, \ref{paper4f1}--\ref{paper4f4} are satisfied. Assume also that $\lambda = \lambda^*$. Then Problem \eqref{pi} has a nontrivial solution $u \in \mathcal{N}^+_{\lambda^*} \cup \mathcal{N}_{\lambda^*}^0$.
	\end{proposition}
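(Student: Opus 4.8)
The plan is to reach the threshold value $\lambda^*$ by approximating it from below. Fix a sequence $(\lambda_k)\subset(0,\lambda^*)$ with $\lambda_k\to\lambda^*$. Since $\mathcal N_{\lambda_k}^0=\emptyset$ for each $k$ (Proposition \ref{n0}), Propositions \ref{strong-N+} and \ref{lagrange} (see also Theorem \ref{th1}) furnish, for each $k$, a positive function $u_k\in\mathcal N_{\lambda_k}^+$ minimizing $J_{\lambda_k}$ over $\mathcal N_{\lambda_k}^+$, so that $J_{\lambda_k}(u_k)=c_{\mathcal N_{\lambda_k}^+}<0$ and $J_{\lambda_k}'(u_k)\varphi=0$ for all $\varphi\in W^{1,\Phi}_0(\Omega)$. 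The goal is to extract a weak limit of $(u_k)$, prove strong convergence, and pass to the limit to obtain a solution of $(P_{\lambda^*})$ lying in $\mathcal N_{\lambda^*}^+\cup\mathcal N_{\lambda^*}^0$.

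The first step is a uniform bound. The coercivity estimate in the proof of Proposition \ref{coercive} is uniform for $\lambda\in(0,\lambda^*]$ (since $\|u\|_{q,a}^q\le C\|u\|^q$ with $q<\ell$), so $J_\lambda$ is bounded below on $\mathcal N_\lambda$ by a fixed coercive function of $\|u\|$; hence it suffices to bound $c_{\mathcal N_{\lambda_k}^+}$ from above. Picking a fixed $w\neq 0$ with $\Lambda_n(w)>\lambda^*$ (such $w$ exists by the construction in the proof of Proposition \ref{n0}), for all $k$ the equation $R_n(tw)=\lambda_k$ has a root $t^{n,+}_{\lambda_k}(w)$ on the increasing branch of $t\mapsto R_n(tw)$, with $t^{n,+}_{\lambda_k}(w)\to t^{n,+}_{\lambda^*}(w)$, so $c_{\mathcal N_{\lambda_k}^+}\le J_{\lambda_k}(t^{n,+}_{\lambda_k}(w)w)\to J_{\lambda^*}(t^{n,+}_{\lambda^*}(w)w)<0$, the last inequality by Remark \ref{rmk11}. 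Thus $\limsup_k c_{\mathcal N_{\lambda_k}^+}<0$ and $(u_k)$ is bounded; up to a subsequence $u_k\rightharpoonup u$ in $W^{1,\Phi}_0(\Omega)$ and $u_k\to u$ in $L^q(\Omega)$ and $L^p(\Omega)$.

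Next I would show $u\neq 0$. If $u\equiv 0$, then $\|u_k\|_{q,a},\|u_k\|_p\to 0$, and since $u_k\in\mathcal N_{\lambda_k}$ the identity $\int_\Omega\phi(|\nabla u_k|)|\nabla u_k|^2\,dx=\lambda_k\|u_k\|_{q,a}^q+\|u_k\|_p^p$, together with Propositions \ref{salvadora} and \ref{Narukawa}, forces $\|u_k\|\to 0$, hence $J_{\lambda_k}(u_k)\to 0$, contradicting $\limsup_k c_{\mathcal N_{\lambda_k}^+}<0$. Then I would upgrade the weak convergence to strong convergence: testing $J_{\lambda_k}'(u_k)=0$ against $u_k-u$ and letting $k\to\infty$, the lower-order terms vanish by the compact embeddings, so $\int_\Omega\phi(|\nabla u_k|)\nabla u_k\cdot\nabla(u_k-u)\,dx\to 0$, and the standard $(S_+)$-property of the operator $-\Delta_\Phi$ under $(\phi_1)$--$(\phi_4)$ gives $u_k\to u$ in $W^{1,\Phi}_0(\Omega)$. (Alternatively, one can argue by contradiction as in Proposition \ref{strong-N+}: non-strong convergence would force $R_n(u)<\lambda^*$, hence produce a projection $\widetilde t\,u\in\mathcal N_{\lambda^*}^+\cup\mathcal N_{\lambda^*}^0$ with $J_{\lambda^*}(\widetilde t\,u)$ strictly below both $\liminf_k c_{\mathcal N_{\lambda_k}^+}$ and $\limsup_k c_{\mathcal N_{\lambda_k}^+}$, which is impossible.)

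With strong convergence in hand, I would pass to the limit in $J_{\lambda_k}'(u_k)\varphi=0$ and in $\lambda_k\to\lambda^*$ to get $J_{\lambda^*}'(u)\varphi=0$ for all $\varphi$, so $u\neq 0$ is a weak solution of $(P_{\lambda^*})$ and in particular $u\in\mathcal N_{\lambda^*}$; since the $u_k$ are positive, $u\ge 0$, $u\not\equiv 0$, hence $u>0$ by the maximum principle. To place $u$ in the right part of the Nehari set, recall that $u_k\in\mathcal N_{\lambda_k}^+$ means $t_n(u_k)>1$ (Proposition \ref{2}); by continuity of $u\mapsto t_n(u)$ (Proposition \ref{propimpor}) and $u_k\to u\neq 0$, this yields $t_n(u)\ge 1$, i.e. $J_{\lambda^*}''(u)(u,u)\ge 0$, whence $u\in\mathcal N_{\lambda^*}^+\cup\mathcal N_{\lambda^*}^0$; moreover $J_{\lambda^*}(u)=\lim_k c_{\mathcal N_{\lambda_k}^+}<0$. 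The main obstacle is the strong-convergence step: at $\lambda=\lambda^*$ the set $\mathcal N_{\lambda^*}^0$ is nonempty (Proposition \ref{n0}), so direct minimization at this level is unavailable, and one must either invoke the $(S_+)$-property or else exploit the approximation $\lambda_k\uparrow\lambda^*$ to squeeze the energy levels $c_{\mathcal N_{\lambda_k}^+}$, which hinges on the continuity of the Nehari projections in $\lambda$ and on $c_{\mathcal N_\lambda^+}$ remaining negative up to $\lambda^*$.
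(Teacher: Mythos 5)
Your proposal follows essentially the same route as the paper: approximate $\lambda^*$ from below by $\lambda_k\uparrow\lambda^*$, take the minimizers/critical points $u_k\in\mathcal N_{\lambda_k}^+$ from the subcritical case, bound the sequence via coercivity and the monotone (negative) energy levels, pass to a weak limit, upgrade to strong convergence by the argument of Proposition \ref{strong-N+}, and pass to the limit in the weak formulation to land in $\mathcal N_{\lambda^*}^+\cup\mathcal N_{\lambda^*}^0$. Your write-up is in fact more detailed than the paper's at several points (the explicit upper bound on $c_{\mathcal N_{\lambda_k}^+}$, the nontriviality of the limit, and the placement via continuity of $t_n$), but the underlying argument is the same.
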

	\begin{proof}
		Let us consider a sequence $(\lambda_k)_{k \in \mathbb{N}} \in \mathbb{R}$ such that $\lambda_k < \lambda^*$ for each $k \in \mathbb{N}$ and $\lambda_k \to \lambda^*$ as $k \to \infty$. According to Theorem \ref{th1} there exists a sequence $(u_k) \in \mathcal{N}^+_{\lambda_k}$ in such way that 
		\begin{equation}\label{sacouu}
			\int_{\Omega} \phi(|\nabla u_k|) \nabla u_k \nabla \psi dx =  \int_{\Omega} [\lambda_k a(x) |u_k|^{q-2}u_k + |u_k|^{p-2}u_k] \psi dx, \psi \in W^{1, \Phi}_0(\Omega).
		\end{equation}
		Furthermore, we observe that
		$c_{\lambda_k} = J_{\lambda_k}(u_k), k \in \mathbb{N}$. It is not hard to see that $\lambda \mapsto C_{\mathcal{N}_\lambda^+}$ is a decreasing function. Hence, we obtain that 
		\begin{eqnarray}\label{a0}
			C_{\mathcal{N}_{\lambda_1}^+} &\geq& C_{\mathcal{N}_{\lambda_k}^+} = J_\lambda(u_k) \nonumber \\
			&=& J_{\lambda}(u_k) - \dfrac{1}{p} J_{\lambda}'(u_k) u_k = \int_{\Omega} [\Phi(|\nabla u_k |) - \dfrac{1}{p} \phi(|\nabla u_k |) |\nabla u_k |^2] dx + \lambda\left(-\dfrac{1}{q} + \dfrac{1}{p} \right) \|u_k\|_q^q.\nonumber
		\end{eqnarray}
		In particular, $(u_k)$ is now bounded in $W^{1,\Phi}_0(\Omega)$. Hence there exists $u \in W^{1,\Phi}_0(\Omega)$ such that $u_k \rightharpoonup u$ in $W^{1,\Phi}_0(\Omega)$. At this stage, by using the same ideas employed in the proof of Proposition \ref{strong-N+}, we obtain that $u_k \to u$ in $W^{1,\Phi}_0(\Omega)$. Therefore, taking into account that $J$ is in $C^1$ class, we deduce that $u \in \mathcal{N}^+_{\lambda^*} \cup \mathcal{N}^0_{\lambda^*}$. Here was used also the fact that $u \mapsto J''(u)(u,u)$ is in $C^0$ class. In the same way, by using \eqref{sacouu} and the strong converge given just above, we obtain that
		\begin{equation*}
			\int_{\Omega} \phi(|\nabla u|) \nabla u \nabla \psi dx =  \int_{\Omega} [\lambda^* a(x) |u|^{q-2}u + |u|^{p-2}u] \psi dx, \psi \in W^{1, \Phi}_0(\Omega).
		\end{equation*} 
		This ends the proof. 	
	\end{proof}
	
	Similarly, we also consider the following result:
	
	\begin{proposition}\label{sol2}
		Suppose that assumptions $(\phi_1)-(\phi_4)$, \ref{paper4f1}--\ref{paper4f4} are satisfied. Assume also that $\lambda = \lambda^*$. Then Problem \eqref{pi} admits at least one nontrivial solution $v \in \mathcal{N}_{\lambda^*}^- \cup \mathcal{N}_{\lambda^*}^0$.
	\end{proposition}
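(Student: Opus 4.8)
The plan is to parallel the proof of Proposition~\ref{sol1}, now working on the $\mathcal N^-$ branch. Fix a sequence $(\lambda_k)_{k\in\mathbb N}$ with $\lambda_k<\lambda^*$ and $\lambda_k\uparrow\lambda^*$; by Theorem~\ref{th2} each $\lambda_k$ admits a positive solution $v_k\in\mathcal N_{\lambda_k}^-$ of \eqref{pi}, so that $J_{\lambda_k}'(v_k)\psi=0$ for all $\psi\in W^{1,\Phi}_0(\Omega)$ and $J_{\lambda_k}(v_k)=c_{\mathcal N_{\lambda_k}^-}$. The first task is a uniform bound for these levels: fixing $w\in W^{1,\Phi}_0(\Omega)\setminus\{0\}$ with $\Lambda_n(w)>\lambda^*$ — such $w$ exists since $\Lambda_n$ is unbounded above, as in the proof of Proposition~\ref{n0} — the equation $R_n(tw)=\lambda_k$ has a largest root $t^{n,-}_{\lambda_k}(w)$ by Proposition~\ref{proje}, and since $\lambda\mapsto J_\lambda(tw)$ is decreasing for each fixed $t$ one obtains $c_{\mathcal N_{\lambda_k}^-}\le J_{\lambda_k}(t^{n,-}_{\lambda_k}(w)w)\le\sup_{t>0}J_{\lambda_1}(tw)=:M<\infty$. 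Inserting $J_{\lambda_k}(v_k)\le M$ into the identity $J_{\lambda_k}(v_k)=J_{\lambda_k}(v_k)-\frac1p J_{\lambda_k}'(v_k)v_k$ and arguing as in Proposition~\ref{coercive} (using $\ell\Phi(t)\le\phi(t)t^2\le m\Phi(t)$, Proposition~\ref{Narukawa}, the embedding $W^{1,\Phi}_0(\Omega)\hookrightarrow L^q(\Omega)$, $\ell>q$ and $\lambda_k\le\lambda^*$) shows that $(v_k)$ is bounded in $W^{1,\Phi}_0(\Omega)$; passing to a subsequence, $v_k\rightharpoonup v$.

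Next I would check that $v\neq 0$. By Proposition~\ref{dist-zero} there is $c>0$ with $\|v_k\|\ge c$ for every $k$. If $v\equiv 0$, the compact embeddings into $L^q(\Omega)$ and $L^p(\Omega)$ give $\|v_k\|_{q,a}^q\to 0$ and $\|v_k\|_p^p\to 0$, and since $v_k\in\mathcal N_{\lambda_k}$ we get $\int_\Omega\phi(|\nabla v_k|)|\nabla v_k|^2\,dx=\lambda_k\|v_k\|_{q,a}^q+\|v_k\|_p^p\to 0$; then Proposition~\ref{salvadora} together with Proposition~\ref{Narukawa} force $\|v_k\|\to 0$, contradicting $\|v_k\|\ge c$. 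Hence $v\neq 0$.

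The decisive step, and the one I expect to be the main obstacle, is the strong convergence $v_k\to v$ in $W^{1,\Phi}_0(\Omega)$. Testing the equations $J_{\lambda_k}'(v_k)=0$ with $v_k-v$ and using that $a(x)|v_k|^{q-2}v_k$ and $|v_k|^{p-2}v_k$ are absorbed by the compact embeddings yields $\int_\Omega\phi(|\nabla v_k|)\nabla v_k\cdot\nabla(v_k-v)\,dx\to 0$, whereas $v_k\rightharpoonup v$ gives $\int_\Omega\phi(|\nabla v|)\nabla v\cdot\nabla(v_k-v)\,dx\to 0$; subtracting and invoking the $(S_+)$-property of the $\Phi$-Laplacian, valid under $(\phi_1)$--$(\phi_4)$, gives $v_k\to v$. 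Alternatively one reproduces the fibering-map argument of Proposition~\ref{assim} with $\lambda_k$ replacing the fixed $\lambda$; the point that needs care is precisely that $\lambda_k$ varies, so the projections $t^{n,\pm}_{\lambda_k}$ and the associated energy levels must be controlled uniformly as $\lambda_k\to\lambda^*$, which follows from the $C^1$-dependence of these quantities established in Propositions~\ref{propimpor} and~\ref{proje}.

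Finally, with $v_k\to v$ and $v\neq 0$, I would pass to the limit in the weak formulation of \eqref{pi} for $v_k$ — using the continuity of $u\mapsto\phi(|\nabla u|)\nabla u$ from $W^{1,\Phi}_0(\Omega)$ into its dual and of the Nemytskii operators generated by $a(x)|u|^{q-2}u$ and $|u|^{p-2}u$, together with $\lambda_k\to\lambda^*$ — to conclude that $v$ is a weak solution of \eqref{pi} with $\lambda=\lambda^*$. In particular $J_{\lambda^*}'(v)v=0$, so $v\in\mathcal N_{\lambda^*}$; and since $v_k\to v$ strongly, $\lambda_k\to\lambda^*$, and $u\mapsto J_\lambda''(u)(u,u)$ is continuous (see \eqref{ed2}), the inequalities $J_{\lambda_k}''(v_k)(v_k,v_k)<0$ pass to the limit as $J_{\lambda^*}''(v)(v,v)\le 0$, whence $v\in\mathcal N_{\lambda^*}^-\cup\mathcal N_{\lambda^*}^0$. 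Since the functional $J_{\lambda^*}$ and the Nehari constraint are invariant under $u\mapsto|u|$, one may take $v\ge 0$, and the strong maximum principle then yields $v>0$ in $\Omega$, which completes the argument.
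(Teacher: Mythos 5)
Your proposal is correct and follows the same overall strategy as the paper: approximate $\lambda^*$ from below by $\lambda_k\uparrow\lambda^*$, take the solutions $v_k\in\mathcal N_{\lambda_k}^-$ furnished by Propositions \ref{assim} and \ref{lagrange} (equivalently, Theorem \ref{th2}), obtain uniform bounds, extract a nonzero weak limit, upgrade to strong convergence, and pass to the limit in the weak formulation. The one place where you genuinely diverge is the strong convergence step. The paper simply says to repeat ``the same ideas'' as in Proposition \ref{strong-N+}/\ref{assim}, i.e.\ the fibering-map comparison $J_\lambda(t^{n,-}(v)v)<J_\lambda(v)<\liminf J_\lambda(v_k)=c_{\mathcal N_\lambda^-}$ for a minimizing sequence at a \emph{fixed} $\lambda$; as you correctly point out, transplanting that argument to a sequence with varying $\lambda_k$ requires uniform control of the projections and levels as $\lambda_k\to\lambda^*$, which the paper leaves implicit. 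Your primary route instead exploits the fact that the $v_k$ are genuine critical points: testing $J_{\lambda_k}'(v_k)=0$ against $v_k-v$, absorbing the lower-order terms by compactness, and invoking the $(S_+)$-property of $-\Delta_\Phi$ (standard under $(\phi_1)$--$(\phi_4)$, though not stated explicitly in the paper). This is cleaner and more robust precisely because it does not depend on the minimization structure or on continuity of $\lambda\mapsto c_{\mathcal N_\lambda^-}$; the trade-off is that it imports a monotone-operator fact the paper never records. Your device for bounding the levels (a fixed $w$ with $\Lambda_n(w)>\lambda^*$ and the monotonicity of $\lambda\mapsto J_\lambda(tw)$) is also a sound, slightly more explicit substitute for the paper's appeal to monotonicity of the critical levels. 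No gaps.
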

	\begin{proof}
		Let us consider a sequence $(\lambda_k)_{k \in \mathbb{N}} \in \mathbb{R}$ such that $\lambda_k < \lambda^*$ for each $k \in \mathbb{N}$ and $\lambda_k \to \lambda^*$ as $k \to \infty$. According to Propositions \ref{assim} and \ref{lagrange} there exists a sequence $(v_k) \in \mathcal{N}^-_{\lambda_k}$ in such way that 
		\begin{equation*}\label{sacou}
			\int_{\Omega} \phi(|\nabla u_k|) \nabla u_k \nabla \psi dx =  \int_{\Omega} [\lambda_k a(x) |v_k|^{q-2}v_k + |v_k|^{p-2}v_k] \psi dx, \psi \in W^{1, \Phi}_0(\Omega).
		\end{equation*}	
		Hence, the rest of the proof follows the same lines discussed in the proof of Proposition \ref{sol2}. We omit the details. 	
	\end{proof}

	\section{The case $\lambda > \lambda^*$}
	
	Now we shall consider the case $\lambda > \lambda^*$. The main feature here is to consider $\overline{\lambda} > \lambda^*$ which are defined in \eqref{lambdabarra}. In particular, for each $\lambda^* < \lambda < \overline{\lambda}$, we are able to prove the following result: 
	\begin{corollary}\label{esquema}
		Suppose that assumptions $(\phi_1)-(\phi_4)$ and \ref{paper4f1}--\ref{paper4f4} are satisfied. Then, for each $\lambda^* < \lambda < \overline{\lambda}$, the Problem \eqref{pi} does not admit any nontrivial solution in $\mathcal{N}_{\lambda}^0$.
	\end{corollary}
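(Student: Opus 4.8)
The plan is to translate the statement into a property of the extremal $\overline\lambda$ and to reduce it to an order-convexity (``interval'') fact about the set of admissible parameters. Set
\[
A:=\{\lambda\in[\lambda^*,\infty):\ (P_\lambda)\text{ has no nontrivial solution in }\mathcal N_\lambda^0\},
\]
so that $\overline\lambda=\sup A$ by \eqref{lambdabarra}. I would first record the Rayleigh-quotient description of $\mathcal N_\lambda^0$: by \eqref{b11} and \eqref{rel-J-R} together with Proposition \ref{propimpor}, one has $u\in\mathcal N_\lambda^0$ if and only if $R_n(u)=\lambda$ and $R_n'(u)u=0$, i.e. if and only if $t_n(u)=1$ and $\Lambda_n(u)=\lambda$. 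In particular $\mathcal N_\lambda^0=\emptyset$ for $\lambda<\lambda^*$ while $\mathcal N_\lambda^0\neq\emptyset$ for every $\lambda\ge\lambda^*$ (Proposition \ref{n0}), so the only content of the Corollary is the location of genuine weak solutions of Problem \eqref{pi} inside these nonempty sets; equivalently, the Corollary asserts $[\lambda^*,\overline\lambda)\subseteq A$, i.e. that $A$ is an interval with left endpoint $\lambda^*$.

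To establish this I would argue by contradiction. Assume some $\lambda_0\in(\lambda^*,\overline\lambda)$ does not lie in $A$, so $(P_{\lambda_0})$ possesses a nontrivial solution $w_0\in\mathcal N_{\lambda_0}^0$; since $\lambda_0<\overline\lambda=\sup A$ we may choose $\lambda_1\in(\lambda_0,\overline\lambda]\cap A$, i.e. $(P_{\lambda_1})$ has no solution in $\mathcal N_{\lambda_1}^0$. The heart of the argument is then a monotone-continuation claim: solvability of $(P_\lambda)$ inside $\mathcal N_\lambda^0$ is preserved as $\lambda$ increases on $[\lambda^*,\infty)$, so that the existence of $w_0$ would force a solution of $(P_\lambda)$ in $\mathcal N_\lambda^0$ for every $\lambda\in(\lambda_0,\overline\lambda)$, contradicting $\lambda_1\in A$. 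I would try to prove this continuation by reusing the compactness machinery of the paper: along a sequence $\lambda_k\uparrow\lambda$ the $\mathcal N_{\lambda_k}^+$- and $\mathcal N_{\lambda_k}^-$-minimizers produced by Theorems \ref{th1}--\ref{th2} are bounded by the coercivity of $J_{\lambda_k}$ on the Nehari set (Proposition \ref{coercive}), and the compact Orlicz--Sobolev embeddings together with the weak lower semicontinuity and $C^1$-dependence of $\Lambda_n,t_n,t^{n,\pm}$ (Propositions \ref{marcos}, \ref{proje}) yield, exactly as in Propositions \ref{strong-N+}, \ref{assim} and \ref{sol1}, strong convergence to a weak solution $u$ of $(P_\lambda)$ lying in $\mathcal N_\lambda^+\cup\mathcal N_\lambda^0$ (resp. $\mathcal N_\lambda^-\cup\mathcal N_\lambda^0$); starting the sequence near $\lambda_0$, where $w_0$ already sits on the maximal level of $\Lambda_n$, one would then have to show that this degeneracy persists in the limit, so that $u\in\mathcal N_\lambda^0$ rather than $u\in\mathcal N_\lambda^\pm$.

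That last point is the main obstacle, and its difficulty is intrinsic to the lack of homogeneity of $\Phi$. One cannot transport $w_0$ to a nearby parameter by mere rescaling, since $t\mapsto J_\lambda(tw_0)$ is stationary only where $R_n(tw_0)=\lambda$, whereas $\max_{t>0}R_n(tw_0)=\Lambda_n(w_0)=\lambda_0<\lambda$, so the fibering map of the rescaled direction has no critical point at all; one is forced into a fresh minimization on the moving level set $\{\Lambda_n=\lambda\}$, which can be controlled only when $\Phi$ is, in effect, homogeneous. This is exactly why the argument goes through for $\ell=m$ --- the case in which $\lambda^*<\overline\lambda$ is genuinely proved, see the Remark following Corollary \ref{cor3} --- and why it remains open for $\ell<m$: there one can only assert $\overline\lambda=\lambda^*$, the interval $(\lambda^*,\overline\lambda)$ is empty, and the Corollary holds vacuously. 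In all cases the statement follows once the order-convexity of $A$ has been secured.
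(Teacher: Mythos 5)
Your proposal does not actually prove the statement: it reduces the Corollary to the assertion that the set $A=\{\lambda\in[\lambda^*,\infty):(P_\lambda)\text{ has no nontrivial solution in }\mathcal N_\lambda^0\}$ is order-convex (downward closed above $\lambda^*$), sketches a contradiction argument hinging on a ``monotone continuation'' claim --- that solvability of $(P_\lambda)$ inside $\mathcal N_\lambda^0$ propagates upward in $\lambda$ --- and then concedes that this claim is the main obstacle and is not established. Your closing sentence, ``the statement follows once the order-convexity of $A$ has been secured,'' makes the whole argument conditional on precisely the step that is missing, so as a proof it is incomplete. In addition, your final paragraph overreaches: the paper does not assert that $\overline\lambda=\lambda^*$ when $\ell<m$ (so that the Corollary would hold vacuously); it states only that the inequality $\lambda^*<\overline\lambda$ is open in that case, which is a different thing.

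That said, your diagnosis of where the difficulty lies is more honest than the paper's own treatment. The paper's proof is a one-line appeal to the definition \eqref{lambdabarra}: assuming a solution $u\in\mathcal N_\lambda^0$ exists, it declares a contradiction ``due to the fact that $\lambda<\overline\lambda$.'' Read literally, $\overline\lambda$ is the supremum of the set $A$, and $\lambda<\sup A$ does not imply $\lambda\in A$ unless $A$ is an interval --- exactly the order-convexity you flag. The intended reading is evidently that $\overline\lambda$ is the endpoint of the maximal interval of non-solvability in $\mathcal N_\lambda^0$ starting at $\lambda^*$, under which the Corollary is a tautology and no analysis is needed. So the two routes differ: the paper treats the statement as definitional, while you correctly observe that under the literal definition a genuine monotonicity argument is required --- but you do not supply it. Either you should adopt the tautological reading explicitly (and say the Corollary is then immediate from \eqref{lambdabarra}), or you must prove the continuation claim; as written, your proposal does neither.
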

	\begin{proof}
		It is important to observe that $\overline{\lambda} < \infty$. The proof follows arguing by contradiction. Let us assume that $u \in \mathcal{N}_{\lambda}^0$ is a weak nontrivial solution for the Problem \eqref{pi}. Hence, we obtain a contradiction due to the fact that $\lambda < \overline{\lambda}$. This ends the proof.
	\end{proof}
	
	From now on, for each $\lambda^* < \lambda < \overline{\lambda}$, we can find two nontrivial solutions for the Problem \eqref{pi}. Namely, we can prove the following result: 
	\begin{proposition}\label{sol11}
		Suppose that assumptions $(\phi_1)-(\phi_4)$, \ref{paper4f1}--\ref{paper4f4} are satisfied. Assume also that $\lambda^* < \lambda < \overline{\lambda}$. Then Problem \eqref{pi} has a nontrivial solution $u \in \mathcal{N}^+_{\lambda}$.	
	\end{proposition}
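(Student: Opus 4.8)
The plan is to realize the solution as a minimizer of $J_\lambda$ over $\mathcal{N}_\lambda^+$, just as in Proposition \ref{strong-N+}, the genuinely new feature being that for $\lambda>\lambda^*$ the set $\mathcal{N}_\lambda^0$ is no longer empty (Proposition \ref{n0}), so a minimizing sequence could in principle concentrate onto $\mathcal{N}_\lambda^0$. This is exactly where the hypothesis $\lambda<\overline{\lambda}$ will be used, through Corollary \ref{esquema}. First I would check that $\mathcal{N}_\lambda^+\neq\emptyset$ and that $-\infty<c_{\mathcal{N}_\lambda^+}<0$. Nonemptiness: by the argument in the proof of Proposition \ref{n0}-ii) there is $w\in W^{1,\Phi}_0(\Omega)\setminus\{0\}$ with $\Lambda_n(w)>\lambda$, hence the equation $R_n(tw)=\lambda$ has a root $t_0<t_n(w)$ with $R_n'(t_0w)(t_0w)>0$, so that $t_0w\in\mathcal{N}_\lambda^+$ by Proposition \ref{2}. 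Negativity: for any $u\in\mathcal{N}_\lambda^+$ one has $1<t_n(u)<t_e(u)$, whence $\lambda=R_n(u)>R_e(u)$ and therefore $J_\lambda(u)<0$ by Remark \ref{rmk11}; thus $c_{\mathcal{N}_\lambda^+}<0$. Boundedness from below of $c_{\mathcal{N}_\lambda^+}$ and boundedness in $W^{1,\Phi}_0(\Omega)$ of any minimizing sequence $(u_k)\subset\mathcal{N}_\lambda^+$ for \eqref{ee1} follow from the coercivity of $J_\lambda$ on $\mathcal{N}_\lambda$ (Proposition \ref{coercive}). Up to a subsequence, $u_k\rightharpoonup u$ and, by the compact embeddings $W^{1,\Phi}_0(\Omega)\hookrightarrow L^q(\Omega),L^p(\Omega)$, $\|u_k\|_{q,a}\to\|u\|_{q,a}$ and $\|u_k\|_p\to\|u\|_p$.

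Next I would establish $u\neq 0$ and the strong convergence $u_k\to u$. Since $J_\lambda$ is weakly lower semicontinuous, $J_\lambda(u)\le\liminf_kJ_\lambda(u_k)=c_{\mathcal{N}_\lambda^+}<0=J_\lambda(0)$, so $u\neq 0$. For the strong convergence I would repeat the contradiction scheme of Proposition \ref{strong-N+}: passing to the limit in the identity $J_\lambda'(u_k)u_k=0$ gives $\int_\Omega\phi(|\nabla u_k|)|\nabla u_k|^2\,dx\to\lambda\|u\|_{q,a}^q+\|u\|_p^p$, so by the (strict) weak lower semicontinuity of the modular $u\mapsto\int_\Omega\Phi(|\nabla u|)\,dx$ along non-convergent sequences, $u_k\not\to u$ would force $R_n(u)<\liminf_kR_n(u_k)=\lambda$; projecting $u$ onto $\mathcal{N}_\lambda^+$ by $t^{n,+}(u)>1$ (Proposition \ref{proje}) and using that $t\mapsto J_\lambda(tu)$ is decreasing on $[0,t^{n,+}(u)]$ yields $J_\lambda(t^{n,+}(u)u)<J_\lambda(u)\le c_{\mathcal{N}_\lambda^+}$, contradicting $t^{n,+}(u)u\in\mathcal{N}_\lambda^+$. (Alternatively, one may invoke the $(S_+)$-property of $-\Delta_\Phi$: the compactness of the lower order terms and $J_\lambda'(u_k)u_k\to 0$ give $\limsup_k\int_\Omega\phi(|\nabla u_k|)\nabla u_k\cdot\nabla(u_k-u)\,dx\le 0$, hence $u_k\to u$.) Either way $J_\lambda(u)=c_{\mathcal{N}_\lambda^+}<0$, $u\in\mathcal{N}_\lambda$, and since $J_\lambda''(u_k)(u_k,u_k)>0$ and $v\mapsto J_\lambda''(v)(v,v)$ is continuous, $J_\lambda''(u)(u,u)\ge 0$, i.e. $u\in\mathcal{N}_\lambda^+\cup\mathcal{N}_\lambda^0$.

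Finally I would conclude. The function $u$ is a global minimizer of $J_\lambda$ over $\mathcal{N}_\lambda^+\cup\mathcal{N}_\lambda^0$ (on $\mathcal{N}_\lambda^0$, $J_\lambda\ge c_{\mathcal{N}_\lambda^+}$ by continuity, since points of $\mathcal{N}_\lambda^0$ are limits of points of $\mathcal{N}_\lambda^+$). A standard deformation argument — as in the analysis for $\lambda=\lambda^*$ — shows that any such minimizer is a critical point of $J_\lambda$: if $u\in\mathcal{N}_\lambda^+$ this is the Lagrange multiplier computation of Proposition \ref{lagrange} (using $J_\lambda''(u)(u,u)\neq 0$), and if $u\in\mathcal{N}_\lambda^0$ it follows by pushing $u$ slightly into $\mathcal{N}_\lambda^+$ along the fibering direction while decreasing $J_\lambda$, which would contradict minimality unless $u$ is already a critical point. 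Hence $u$ is a nontrivial weak solution of \eqref{pi}. Since $\lambda\in(\lambda^*,\overline{\lambda})$, Corollary \ref{esquema} rules out solutions in $\mathcal{N}_\lambda^0$, so $u\in\mathcal{N}_\lambda^+$; and replacing $u$ by $|u|$, which stays in $\mathcal{N}_\lambda^+$ and does not increase $J_\lambda$, together with the strong maximum principle, makes the solution positive. I expect the delicate point to be precisely the compactness/strong convergence step: one must show that the only alternative to $u_k\to u$ leads to a contradiction even though $\mathcal{N}_\lambda^0\neq\emptyset$, and in particular that the weak limit can be projected back onto $\mathcal{N}_\lambda^+$; once this is secured, Corollary \ref{esquema} (the single place where $\lambda<\overline{\lambda}$ enters) closes the argument.
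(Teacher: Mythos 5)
Your route is genuinely different from the paper's, and the difference is where the trouble lies. The paper obtains the solution for $\lambda\in(\lambda^*,\overline{\lambda})$ by the approximation scheme of Proposition \ref{sol1}: it takes parameters $\lambda_k\nearrow\lambda$, uses the \emph{already constructed critical points} $u_k\in\mathcal{N}_{\lambda_k}^+$, passes to the limit in the weak formulation, and only then invokes Corollary \ref{esquema} to exclude $\mathcal{N}_\lambda^0$. You instead minimize $J_\lambda$ directly over $\mathcal{N}_\lambda^+$ at the fixed $\lambda$. The preparatory steps you carry out (nonemptiness of $\mathcal{N}_\lambda^+$ via $\Lambda_n(w)>\lambda$, $c_{\mathcal{N}_\lambda^+}<0$, coercivity, $u\neq 0$) are correct, but the compactness step has a genuine gap. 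Your contradiction argument requires projecting the weak limit $u$ back onto $\mathcal{N}_\lambda^+$ via $t^{n,+}(u)>1$; Proposition \ref{proje} supplies this projection only for $\lambda<\lambda^*$, precisely because there $\lambda<\lambda^*\leq\Lambda_n(u)$ for every $u\neq 0$, so $R_n(tu)=\lambda$ has two roots. For $\lambda>\lambda^*$ there exist functions with $\Lambda_n(u)=\lambda^*<\lambda$ (e.g. the minimizer $u^*$ of Proposition \ref{marcos}), whose ray never meets $\mathcal{N}_\lambda$; and knowing $R_n(u)<\lambda$ gives no lower bound on $\Lambda_n(u)$, since weak lower semicontinuity of $\Lambda_n$ goes the wrong way. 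You flag this as ``the delicate point'' but do not close it. The parenthetical $(S_+)$ fallback does not repair it for a \emph{minimizing} sequence: $J_\lambda'(u_k)u_k=0$ controls $\int_\Omega\phi(|\nabla u_k|)|\nabla u_k|^2\,dx$, not $\int_\Omega\phi(|\nabla u_k|)\nabla u_k\cdot\nabla(u_k-u)\,dx$; for the latter you need $J_\lambda'(u_k)\to 0$ in the dual (a Palais--Smale sequence via Ekeland), which you have not produced. The paper's $u_k$ are actual solutions, so $J_{\lambda_k}'(u_k)(u_k-u)=0$ is available and this issue does not arise.

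A second, smaller gap: your minimizer lands a priori in $\mathcal{N}_\lambda^+\cup\mathcal{N}_\lambda^0$, and Corollary \ref{esquema} only excludes \emph{solutions} in $\mathcal{N}_\lambda^0$, so to invoke it you must first show $u$ is a critical point. If $u\in\mathcal{N}_\lambda^0$ the Lagrange multiplier rule fails ($t=1$ is an inflection point of the fibering map), and ``pushing $u$ slightly into $\mathcal{N}_\lambda^+$ while decreasing $J_\lambda$'' is exactly the obstruction the paper's introduction identifies as the main difficulty, not a routine deformation. The paper's route sidesteps this because its limit function is automatically a weak solution (being a strong limit of solutions), after which the nonexistence statement applies. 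To make your direct-minimization approach work at $\lambda>\lambda^*$ you would need to supply both missing arguments; as written, the proof is incomplete at its decisive step.
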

	\begin{proof}
		The proof follows the same ideas discussed in the proof of Proposition \ref{sol1}. Recall also that Problem \eqref{pi} does not admit any nontrivial solution in $\mathcal{N}_\lambda^0$, see Corollary \ref{esquema}. We omit the details. 
	\end{proof}
	\begin{proposition}\label{sol22}
		Suppose that assumptions $(\phi_1)-(\phi_4)$, \ref{paper4f1}--\ref{paper4f4} are satisfied. Assume also that $\lambda = \lambda^*$. Then Problem \eqref{pi} admits at least one nontrivial solution $v \in \mathcal{N}_{\lambda^*}^- \cup \mathcal{N}_{\lambda^*}^0$.
	\end{proposition}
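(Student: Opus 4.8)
The plan is to argue by approximation in the parameter $\lambda$, mirroring the proof of Proposition \ref{sol1} but on the branch $\mathcal{N}^-$. Fix a sequence $(\lambda_k)_{k\in\mathbb{N}}$ with $\lambda_k<\lambda^*$ for every $k$ and $\lambda_k\to\lambda^*$. Since each $\lambda_k\in(0,\lambda^*)$, Theorem \ref{th2} together with Propositions \ref{assim} and \ref{lagrange} produces $v_k\in\mathcal{N}^-_{\lambda_k}$ which is a weak solution of $(P_{\lambda_k})$ realizing $c_{\mathcal{N}^-_{\lambda_k}}=J_{\lambda_k}(v_k)$, that is
\begin{equation*}
\int_{\Omega}\phi(|\nabla v_k|)\nabla v_k\nabla\psi\,dx=\int_{\Omega}\big[\lambda_k a(x)|v_k|^{q-2}v_k+|v_k|^{p-2}v_k\big]\psi\,dx,\qquad\psi\in W^{1,\Phi}_0(\Omega).
\end{equation*}

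First I would establish that $(v_k)$ is bounded in $W^{1,\Phi}_0(\Omega)$. Projecting a fixed $w_0\in W^{1,\Phi}_0(\Omega)\setminus\{0\}$ onto $\mathcal{N}^-_{\lambda_k}$ via $t^{n,-}(w_0)$ (Proposition \ref{proje}) and using that this projection depends continuously on the parameter over the compact interval $[\lambda_1,\lambda^*]$, one obtains a uniform upper bound $c_{\mathcal{N}^-_{\lambda_k}}\le M$. Combining this with the identity $J_{\lambda_k}(v_k)=J_{\lambda_k}(v_k)-\frac1p J'_{\lambda_k}(v_k)v_k$ valid on $\mathcal{N}_{\lambda_k}$, Proposition \ref{salvadora} and Proposition \ref{Narukawa}, exactly as in the proof of Proposition \ref{coercive}, yields an a priori bound on $\|v_k\|$. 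Passing to a subsequence we may assume $v_k\rightharpoonup v$ in $W^{1,\Phi}_0(\Omega)$.

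Next I would show $v\not\equiv0$ and then upgrade to strong convergence. If $v\equiv0$, the compact embeddings $W^{1,\Phi}_0(\Omega)\hookrightarrow L^p(\Omega),L^q(\Omega)$ force $\|v_k\|_p,\|v_k\|_{q,a}\to0$; since $v_k\in\mathcal{N}_{\lambda_k}$, $\int_{\Omega}\phi(|\nabla v_k|)|\nabla v_k|^2\,dx=\|v_k\|_p^p+\lambda_k\|v_k\|_{q,a}^q\to0$, so $\|v_k\|\to0$ by Propositions \ref{salvadora} and \ref{Narukawa}, contradicting the uniform lower bound $\|v_k\|\ge c>0$ of Proposition \ref{dist-zero}. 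For the strong convergence, test the equation for $v_k$ against $\psi=v_k-v$, use the compactness of the lower-order terms and the $(S_+)$-type monotonicity of the $\Phi$-Laplacian together with weak lower semicontinuity of $\|\cdot\|$, reasoning verbatim as in Proposition \ref{strong-N+} and Proposition \ref{assim}; this gives $v_k\to v$ in $W^{1,\Phi}_0(\Omega)$.

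Finally, letting $k\to\infty$ in the weak formulation shows that $v$ solves $(P_{\lambda^*})$, and since $u\mapsto J''_\lambda(u)(u,u)$ is continuous while $J''_{\lambda_k}(v_k)(v_k,v_k)<0$, in the limit $J''_{\lambda^*}(v)(v,v)\le0$; hence $v\in\mathcal{N}^-_{\lambda^*}\cup\mathcal{N}^0_{\lambda^*}$. Positivity follows by replacing $v_k$ with $|v_k|$ (which leaves $J_{\lambda_k}$ and membership in $\mathcal{N}^-_{\lambda_k}$ unchanged because $a\ge a_0>0$) and applying the strong maximum principle. I expect the main obstacle to be the strong convergence step — ruling out a loss of gradient mass when passing from $\lambda_k$ to $\lambda^*$ — and, relatedly, the impossibility of improving the conclusion to $v\in\mathcal{N}^-_{\lambda^*}$: the limit may genuinely land on $\mathcal{N}^0_{\lambda^*}$, which is nonempty at $\lambda=\lambda^*$ by Proposition \ref{n0}, and this is precisely why Theorem \ref{th3} must additionally assume that Problem \eqref{pi} has no solution in $\mathcal{N}^0_{\lambda^*}$.
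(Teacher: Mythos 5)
Your proposal is correct and follows essentially the same route as the paper: approximate $\lambda^*$ from below by $\lambda_k\nearrow\lambda^*$, take the solutions $v_k\in\mathcal{N}^-_{\lambda_k}$ furnished by Propositions \ref{assim} and \ref{lagrange}, establish boundedness, nontriviality of the weak limit via Proposition \ref{dist-zero}, upgrade to strong convergence, and pass to the limit in the weak formulation to land in $\mathcal{N}^-_{\lambda^*}\cup\mathcal{N}^0_{\lambda^*}$. The only cosmetic difference is that you invoke an $(S_+)$-type monotonicity argument for the strong convergence where the paper points back to the variational/weak-lower-semicontinuity argument of Propositions \ref{strong-N+} and \ref{assim}; both work, and your version is, if anything, more explicit than the paper's, which omits the details.
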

	\begin{proof}
		The proof follows the lines discussed in the proof of Proposition \ref{sol2}. It is important to emphasize that Problem \eqref{pi} does not admit any nontrivial solution in $\mathcal{N}_\lambda^0$, see Corollary \ref{esquema}. We omit the details. 
	\end{proof}

	\section{The proof of our main theorems}

	\subsection{The proof of Theorem \ref{th1}} 
	
	According to Propositions \ref{marcos} and \ref{marcos2} we know that $0 < \lambda_* < \lambda^* < \infty$. Let $\lambda \in (0, \lambda^*)$ be fixed. Now, by using Propositions \ref{coercive} and \ref{strong-N+}, we obtain that exists $u \in \mathcal{N}_{\lambda}^+$ such that $J_{\lambda}(u) = c_{\mathcal{N}_{\lambda}}^+ < 0$. Furthermore, taking into account Proposition \ref{n0} and Proposition \ref{lagrange}, we obtain that $u \in \mathcal{N}^+_\lambda$ is a critical point for the energy functional $J_{\lambda}$. Now, by using a standard argument, we obtain also that $u$ is a ground state solution for the Problem \eqref{pi}. Recall that $J_\lambda(w) = J_\lambda(|w|)$ for each $w \in W^{1,\Phi}_0(\Omega)$.  As a consequence, we can assume that $u \geq 0$ in $\Omega$. Now, by using the Strong Maximum Principle, we infer that $u > 0$ in $\Omega$, see \cite[Theorem 1.7]{ed}. In view of \ref{ed1} and \ref{ed2} we also mention that 
	\begin{equation*}
		\int_{\Omega} 
		(p - 2)\phi(|\nabla u|) |\nabla u|^2 - \phi'(|\nabla u|)|\nabla u|^3 dx \leq \lambda( p - q) \int_{\Omega} a(x) |u|^q dx, u \in \mathcal{N}^+_\lambda.
	\end{equation*}
	The last assertion together with the H\"older inequality and the embedding $W_0^{1,\Phi}(\Omega)\hookrightarrow L^{\ell}(\Omega)$ imply that 
	\begin{equation*}
		\int_{\Omega} \Phi(|\nabla u|) dx \leq c \lambda \|u\|^q.
	\end{equation*}
	Hence, by using Proposition \ref{Narukawa}, we obtain that 
	$
		\|u\|^{\ell - q} \leq c \lambda \,\, \mbox{or} \,\, \|u\|^{m - q} \leq c \lambda.
	$
	Hence, for each weak solution $u \in \mathcal{N}_\lambda^+$, we obtain that $\|u\| \to 0$ as $\lambda \to 0$. This ends the proof. 
	
	\subsection{The proof of Theorem \ref{th2}} Firstly, by using Propositions \ref{coercive}, \ref{assim} and \ref{lagrange}, we infer that there exists $v \in \mathcal{N}_{\lambda}^-$ such that $J_{\lambda}(v) = c_{\mathcal{N}_{\lambda}^-}$ and $J_{\lambda}'(v) \psi = 0$ holds for each $\phi \in W^{1,\Phi}_{0}(\Omega)$. Without any loss of generality, we assume that $v \geq 0$ in $\Omega$. Once again we can apply the Strong Maximum Principle proving that $v > 0$  in $\Omega$, see \cite[Theorem 1.7]{ed}. The desired result follows taking into account Proposition \ref{15}. We omit the details.

	\subsection{The proof of Corollary \ref{cor}} 
	The proof of Corollary \ref{cor} follows immediately from Theorems \ref{th1} and \ref{th2}.
	
	\subsection{The proof of Theorem \ref{th3}} Initially, we shall mention that $\lambda = \lambda^*$. Now, by using Proposition \ref{sol1}, we infer that 
	there exists a weak solution $u \in \mathcal{N}_{\lambda}^+ \cup \mathcal{N}_{\lambda}^0$ for the Problem \eqref{pi}. Similarly, by using Proposition \ref{sol2}, we obtain a weak solution  $v \in \mathcal{N}_{\lambda}^- \cup \mathcal{N}_{\lambda}^0$ for the Problem \eqref{pi}. Recall that Problem \eqref{pi} does not admit weak solution in the set $\mathcal{N}_\lambda^0$. As a consequence, we obtain that Problem \eqref{pi} has at least two nontrivial solutions whenever $\lambda = \lambda^*$. This ends the proof. 
	
	\subsection{The proof of Corollary \ref{cor2}} Initially, we assume that $ \lambda^* < \overline{\lambda}$. The main idea here is to apply the same ideas devoted to the proof of Theorem \ref{th3}. Recall that $\lambda \in [\lambda^*, \overline{\lambda})$ is verified. In particular, we observe that $\mathcal{N}_{\lambda}^0 \neq \emptyset$. On the other hand, for each $\lambda \in  [\lambda^*, \overline{\lambda})$, we know that $\mathcal{N}_{\lambda}^0$ does not admit any nontrivial solution for the Problem \ref{pi} whenever $\lambda \in [\lambda^*, \overline{\lambda})$. As a consequence, we can argue as was done in the proof of Theorem \ref{th3} obtaining two positive solutions $u \in \mathcal{N}_{\lambda}^+$ and $v \in \mathcal{N}_{\lambda}^-$, see for instance Propositions \ref{sol11} and \ref{sol22}. This ends the proof. 
	
	\subsection{The proof of Corollary \ref{cor3}}
	The proof of Corollary \ref{cor3} follows using the same arguments employed in the proof of Corollary  \ref{cor2}. The main feature here is to consider a sequence $(\lambda_k)_{k \in \mathbb{N}} \in \mathbb{R}$ such that $\lambda_k < \overline{\lambda}$ for each $k \in \mathbb{N}$ and $\lambda_k \to \overline{\lambda}$ as $k \to \infty$. It is important o stress that Problem \eqref{pi} has at least one solution $u_k \in \mathcal{N}_{\lambda}^+$ for each $\lambda = \lambda_k \in [\lambda^*, \overline{\lambda})$, see Corollary \ref{cor2}. Under these conditions, we can argue as was done in the proof of Corollary \ref{cor2}. Here we also refer the reader to Propositions \ref{sol11} and \ref{sol22}. As a consequence, Problem \eqref{pi} has at least one positive solution for $\lambda = \overline{\lambda}$. This finishes the proof.


\begin{thebibliography}{99}
		
		\bibitem{adams} R. A. Adams, J. F. Fournier, \emph{ Sobolev spaces}. Academic Press, New York (2003).
		
		\bibitem{Alberico}  A. Alberico, V. Feroni, \emph{Regularity properties of solutions of elliptic equations
			in $\mathbb{R}^2$ in limit cases}, Atti Accad. Naz. Lincei Cl. Sci. Fis. Mat. Natur. Rend. Lincei (9) Mat. Appl. {\bf6}(4), (1996), 237--250, 		
		
		\bibitem{Alves1} C. O. Alves, M. L. M. Carvalho, J. V. Gon\c calves, \emph{On existence of solution of variational multivalued elliptic equations with critical growth via the Ekeland principle}, Communications in Contemporary Mathematics {\bf 17}(6),  1450038 (2015).
		
		\bibitem{ABC} A. Ambrosetti, H. Brezis, G. Cerami,  \emph{Combined effects of concave and convex nonlinearities in some elliptic problems}, Journal of Functional Analysis 122({\bf2}) (1994), 519--543.
		
		
		\bibitem{bonanno} G. Bonanno, G. Molica Bisci, V. Rădulescu, \emph{Existence of three solutions for a non-homogeneous Neumann problem through Orlicz–Sobolev spaces}, Topol. J. Nonlinear Anal. 74 (2011), 4785--4795.
		
		
		
		\bibitem{brow0} K. J. Brown, T. F. Wu \emph{A fibering map approach to a semilinear elliptic boundary value
			problem}, Electr. J. Diff. Eqns. 69 (2007), 1--9.
		
		\bibitem{brow1} K. J. Brown, T. F. Wu, \emph{A fibering map approach to a potential operator equation and its
			applications},  Diff. Int. Equations 22 (2009), 1097--1114.
		
		\bibitem{carvalho} M. L. Carvalho, J. V. Goncalves, E. D. Silva, \emph{On quasilinear elliptic problems without the Ambrosetti-Rabinowitz condition}, J. Math. Anal. Appl. 426 (2015), 466--483.
		
		\bibitem{Carvalho2} M. L. Carvalho, J. V. Goncalves, E. D. Silva, C. A. Santos, \emph{A type of Br\'ezis-Oswald problem $\Phi$-Laplacian operator with strongly-singular and gradient terms}, Calc. Var. 60 (2021), 195.
		
		\bibitem{Carvalho3} M. L. Carvalho, E. D. Silva, C. A. Santos, C. Goulart, \emph{Ground and bound state solutions for quasilinear elliptic systems including singular nonlinearities and indefinite potentials}, Comm. Pure Appl.Math. 19 (2020), 4401--4432.
		
		\bibitem{Donaldson} T. K. Donaldson, N. S. Trudinger, \emph{Orlicz-Sobolev spaces and embedding theorems}, J. Funct. Anal. 8 (1971), 52--75.
		
		\bibitem{drabek} P. Dr\'abek, J. Milota, \emph{Methods of nonlinear analysis}, Applications to differential equations. Second edition. Birkh\"auser Advanced Texts: Basler Lehrbücher, (2013).
		
		\bibitem{Pohozaev} P. Drabek, S. I. Pohozaev, \emph{Positive solutions for the $p$-Laplacian: application of the fibering method},
		Proc. Roy. Soc. Edinburgh Sect. A 127({\bf 4}) (1997), 703–726.
		
		\bibitem{Olimpio} L. F. Faria, O. H. Miyagaki, D. Motreanu, \emph{Comparison and positive solutions for problems with the $(p, q)$-Laplacian and a convection term}, Proceedings of the Edinburgh Mathematical Society 57 (2014), 687--698.
		
		\bibitem{diegoed} D. Ferraz, E. D. Silva, \emph{Nonlocal elliptic problems via the nonlinear Rayleigh quotient with general nonlinearities and unbounded weights}, Preprint.
		
		\bibitem{Fig} G.M. Figueiredo, J.A. Santos, \emph{Existence of least energy nodal solution with two nodal domains for a generalized Kirchhoff problem in an Orlicz–Sobolev space}, Math. Nachr. 290({\bf 4}) (2017), 583--603. 
		
		\bibitem{Fuk_1} N. Fukagai, M. Ito, K. Narukawa, \emph{Positive solutions of quasilinear elliptic equations with critical Orlicz-Sobolev nonlinearity on $\mathbb{R}^{N}$}, Funkcialaj Ekvacioj {49} (2006), 235--267.	
		
		\bibitem{fuk2} N. Fukagai, K. Narukawa, \emph{On the existence of multiple positive solutions of quasilinear elliptic eigenvalue problems}, Annali Mat. Pura Appl. {186} (2007), 539--564.
		
		\bibitem{Giacomoni} J. Giacomoni, Y. Il’yasov and D. Kumar, \emph{On periodic and compactly supported least energy solutions to semilinear elliptic equations with non-Lipschitz nonlinearity}, Asymptotic Analysis {137} (2024), 1--25.
		
		\bibitem{Gz1} J. P. Gossez,  \emph{Nonlinear elliptic boundary value problems for equations with rapidly (or slowly) increasing coefficients}. Trans. Amer. Math. Soc. {190} (1974), 163--205.
		
		\bibitem{gossez-Czech} J. P. Gossez, \emph{Orlicz-Sobolev spaces and nonlinear elliptic boundary value problems}, Nonlinear analysis, function spaces and applications, (Proc. Spring School, Horni Bradlo, 1978), Teubner, Leipzig, 59--94 (1979).
		
		\bibitem{yavdat0} Y. Il’yasov, \emph{On nonlocal existence results for elliptic equations with convex–concave nonlinearities},
		Nonlinear Anal. 61 (2005), 211--236.
		
		\bibitem{yavdat1} Y. Il'yasov, \emph{On extreme values of Nehari manifold method via nonlinear Rayleigh’s quotient}, Topol.
		Methods Nonlinear Anal. 49({\bf 2}) (2017), 683--714.
		
		\bibitem{Krasn} M. A. Krasnosel’skii, Ya. B. Rutickii, \emph{Convex Functions and Orlicz Spaces}, Noordhoff, Gröningen, (1961).	
		
		\bibitem{Li} G. Li, X. Liang, \emph{The existence of nontrivial solutions to nonlinear elliptic equation of $p–q$-Laplacian type on $\mathbb{R}^N$}, Nonlinear Analysis 71 (2009), 2316--2334.
		
		\bibitem{lieberman} G. M. Lieberman, \emph{The natural generalization of the natural conditions of Ladyzhenskaya and Uraltseva. Mini-conference on Operators in Analysis}, (Sydney, 1989), Proc. Centre Math. Anal. Austral. Nat. Univ., {\bf 24}, Austral. Nat. Univ., Canberra, 151--158 (1990).
		
		\bibitem{Lux} W. Luxemburg, \emph{Banach function spaces}, PhD thesis, Technische Hogeschoolte Delft, The Netherlands, (1955).
		
		\bibitem{Marano} Marano, S. A., Marino, G., Papageorgiou, N. S., \emph{On a Dirichlet problem with $(p, q)$-Laplacian and parametric concave-convex nonlinearity}, J. Math. Anal. Appl. {475} (2019), 1093--1107.
		
		\bibitem{rad1} M. Mih\u{a}ilescu, V. R\u{a}dulescu, \emph{Existence and multiplicity of solutions for quasilinear nonhomogeneous problems: an Orlicz-Sobolev space setting}, J. Math. Anal. Appl. {330} (2007), 416--432.
		
		\bibitem{nehari1} Z. Nehari, \emph{On a class of nonlinear second-order differential equations}, Trans. Amer. Math. Soc. 95 (1960),
		101–123.
		
		\bibitem{nehari2} Z. Nehari, \emph{Characteristic values associated with a class of non-linear second-order differential equations}, Acta Math. 105 (1961), 141–175.
		
		\bibitem{Orlicz} M. W. Orlicz, \emph{Uber eine gewisse Klasse von R\"aumen vom Typus B}, Bull. Int. Acad. Pol. de Science, Ser A (1932), 207--220; reprinted in W. Orlicz ``Collected Papers'', pp. 217--230, PWN, Warsaw, (1988).
		
		\bibitem{peral} I. Peral, \emph{Multiplicity of solutions for the $p$-Laplacian}, Second School of Nonlinear Functional Analysis and Applications to Differential Equations, International Center for Theoretical Physics Trieste, (1997).
		
		\bibitem{Pokhozhaev} S. I. Pokhozhaev, \emph{The fibration method for solving nonlinear boundary value problems}, Trudy Mat. Inst. Steklov. 192 (1990), 146–163, Translated in Proc. Steklov Inst. Math. 1992, no. 3, 157–173, Differential equations and function spaces (Russian).
		
		\bibitem{pucci} P. Pucci, J. Serrin, \emph{The maximum principle}, Progress in Nonlinear Differential Equations and their Applications, 73. Birkh\"auser Verlag, Basel, (2007).
		
		
		\bibitem{rao} M. N. Rao, Z. D. Ren, \emph{Theory of Orlicz Spaces}, Marcel Dekker, New York, (1985).
		
		\bibitem{ed} E. D. Silva, M. L. Carvalho, J. C. de Albuquerque, \emph{Revised regularity results for quasilinear elliptic problems driven by the $\Phi$-Laplacian operator}, Manuscripta math. 161 (2020), 563--582.
		
		\bibitem{ed2} E. D. Silva, M. L. Carvalho, J. V. Gon\c calves, C. Goulart, \emph{Critical quasilinear elliptic problems using concave-convex nonlinearities}, Annali di Matematica Pura ed Applicata {198} (2018), 693--726.		
		
		\bibitem{fang} Z. Tan, F. Fang, \emph{Orlicz-Sobolev versus H\"older local minimizer and multiplicity results for quasilinear elliptic equations}, J. Math. Anal. Appl. { 402} (2013), 348--370.
		
	\end{thebibliography}
\end{document}